\documentclass[12pt]{amsart}
\usepackage{amsmath,xcolor}
\allowdisplaybreaks[4]
\usepackage{geometry,amsthm,graphics,tabularx,amssymb,shapepar}
\geometry{a4paper, left=2cm,right=2cm,top=2cm,bottom=2.06cm}
\usepackage{amscd}
\usepackage{bm,bbm}
\usepackage{mathrsfs}
\usepackage{enumerate}
\usepackage[all]{xypic}

\newcommand{\Res}{\mathrm{Res}}

\newcommand{\fsl}{\mathfrak{sl}}
\newcommand{\fsp}{\mathfrak{sp}}

\newcommand{\lf}{\left}
\newcommand{\rg}{\right}
\newcommand{\g}{\mathfrak g}
\newcommand{\h}{\mathfrak h}

\newcommand{\wh}{\widehat}

\newcommand{\ot}{\otimes}
\newcommand{\op}{\oplus}

\newcommand{\CH}{\mathcal{H}}

\newcommand{\CC}{\mathcal{C}}

\newcommand{\C}{\mathbb{C}}
\newcommand{\R}{\mathbb R}
\newcommand{\N}{\mathbb N}
\newcommand{\Z}{\mathbb Z}

\newcommand{\mk}{\mathbf k}
\newcommand{\Hom}{\mathrm{Hom}}
\newcommand{\nno}{\nonumber}

\newcommand{\te}{\text}
\newcommand{\U}{\mathcal{U}}

\theoremstyle{Theorem}

\theoremstyle{Theorem}

\newtheorem{thm}{Theorem}[section]
\newtheorem{prpt}[thm]{Proposition}
\newtheorem{lem}[thm]{Lemma}

\newtheorem{definition}[thm]{Definition}

\allowdisplaybreaks

\numberwithin{equation}{section}
\title[vertex algebras and  TKK algebras]{Vertex algebras and TKK algebras}

\author{Fulin Chen$^1$}
\address{School of Mathematical Sciences, Xiamen University,
 Xiamen, China 361005} \email{chenf@xmu.edu.cn}
 \thanks{$^1$Partially supported by China NSF grant (Nos. 11971397, 12131018).}
 \author{Lingen Ding}\address{School of Mathematical Sciences, Xiamen University,
 Xiamen, China 361005} \email{lingen@stu.xmu.edu.cn}
  \author{Qing Wang$^2$}\address{School of Mathematical Sciences, Xiamen University,
 Xiamen, China 361005} \email{qingwang@xmu.edu.cn }\thanks{$^2$ Partially supported by
 China NSF grants (Nos. 12071385, 12161141001).}

\subjclass[2010]{17B67 \& 17B69}
\keywords{TKK algebra, vertex algebra, twisted module}

\begin{document}

\begin{abstract}
In this paper, we associate the TKK algebra $\widehat{\mathcal{G}}(\mathcal{J})$ with vertex algebras through twisted modules.
Firstly, we prove that for any complex number $\ell$, the category of restricted $\widehat{\mathcal{G}}(\mathcal{J})$-modules of level $\ell$ is canonically isomorphic to the category of $\sigma$-twisted $V_{\widehat{\CC_\g}}(\ell,0)$-modules, where $V_{\widehat{\CC_\g}}(\ell,0)$ is a vertex algebra arising from the
toroidal Lie algebra of type $C_2$ and $\sigma$ is an isomorphism of $V_{\widehat{\CC_\g}}(\ell,0)$ induced from the involution of this toroidal Lie algebra.
    Secondly, we prove that for any nonnegative integer $\ell$, the integrable restricted $\widehat{\mathcal{G}}(\mathcal{J})$-modules of level $\ell$ are exactly the
    $\sigma$-twisted modules for the quotient vertex algebra $L_{\widehat{\CC_\g}}(\ell,0)$ of $V_{\widehat{\CC_\g}}(\ell,0)$. Finally, we
    classify the irreducible $\frac{1}{2}\N$-graded $\sigma$-twisted $L_{\widehat{\CC_\g}}(\ell,0)$-modules.
\end{abstract}
\maketitle

\section{Introduction}
Extended affine Lie algebras are a class of Lie algebras that  generalize finite-dimensional simple Lie algebras and
affine Kac-Moody algebras (see \cite{H-KT,AABGP,N} and the references therein).
To characterize the (extended affine) root system of extended affine Lie algebras, a notion of semi-lattice was introduced in \cite{AABGP}.
By definition, a semi-lattice $S$ is a subset of a lattice $\Lambda$ in $\R^\nu$ $(\nu\ge 1)$ such that $0\in S$, $S$ generates $\Lambda$, and $2a-b\in S$ for all $a,b\in S$.
Starting from a semi-lattice $S$,
one can associate a Jordan algebra $\mathcal{J}(S)$ to it  (see \cite{AABGP}), and then obtain a perfect Lie algebra $\mathcal{G}(\mathcal{J}(S))$
from  the  Tits-Kantor-Koecher construction (see \cite[Chapter 5]{J}).
The universal central extension $\widehat{\mathcal{G}}(\mathcal{J}(S))$ of  $\mathcal{G}(\mathcal{J}(S))$   is usually referred as the TKK algebra
obtained from the semi-lattice $S$.
In the literatures, the structure and representation theory of the TKK algebras have been extensively studied (see \cite{BL,CCT,CT,GJ,GZ,KT,MT1,MT2,Y,T,YT}, etc.).

When
$S$ is the  unique (up to isomorphism) non-lattice semi-lattice in $\R^2$,  following \cite{T},
we call  $\widehat{\mathcal{G}}(\mathcal{J}(S))$ the baby TKK algebra  and denote it
by $\widehat{\mathcal{G}}(\mathcal{J})$ for simplicity.
In this case the root system obtained from $\widehat{\mathcal{G}}(\mathcal{J})$ is the ``smallest" extended affine root system that is neither finite nor affine (see \cite{AABGP}).
It is well-known that affine Kac-Moody algebras through their restricted modules
can be naturally associated with vertex algebras and (twisted) modules (see \cite{FZ,FLM,Li1,Li2}).
These associations play an
important role in both affine Kac-Moody algebra theory and vertex (operator) algebra theory.
The main goal of this paper is to
generalize such associations to the baby TKK algebra.

Let $\mathfrak{t}$ be a $2$-toroidal Lie algebra (see \cite{MRY}).
It was shown in  \cite{CJKT} that there is a $\Z$-graded Lie conformal algebra
$\CC_\g$ whose associated Lie algebra $\wh{\CC_\g}$ is isomorphic to $\mathfrak{t}$.
As in the affine case, for any complex number $\ell$,
one can construct an $\N$-graded (universal) vertex algebra $V_{\wh{\CC_\g}}(\ell,0)$ associated to $\CC_\g$.
When $\mathfrak{t}$ is of type $C_2$, we define a graded involution $\sigma$ of $\CC_\g$,
which induces an involution $\widehat\sigma$ of $\widehat{\CC_\g}$ and a
graded  involution $\sigma$
of $V_{\wh{\CC_\g}}(\ell,0)$.
As the first main result of this paper, we prove in Theorem \ref{rem} that there is a natural correspondence between the category of restricted $\widehat{\mathcal{G}}(\mathcal{J})$-modules
of level $\ell$ and that of $\sigma$-twisted $V_{\wh{\CC_\g}}(\ell,0)$-modules.
The proof of this correspondence is based on a new realization of $\widehat{\mathcal{G}}(\mathcal{J})$ as
the fixed point subalgebra of the toroidal Lie algebra $\mathfrak{t}$ under $\widehat\sigma$.

For any nonnegative integer $\ell$, we also introduce a $\sigma$-stable graded ideal $J_{\wh{\CC_\g}}(\ell,0)$ of $V_{\wh{\CC_\g}}(\ell,0)$.
Then $\sigma$ induces a  graded involution, still denote it by $\sigma$, of the $\N$-graded quotient vertex algebra $L_{\wh{\CC_\g}}(\ell,0)=V_{\wh{\CC_\g}}(\ell,0)/J_{\wh{\CC_\g}}(\ell,0)$.
As the second main result of this paper, we prove in Theorem \ref{in-re-m} that there is an isomorphism between the category of integrable restricted $\widehat{\mathcal{G}}(\mathcal{J})$-modules
of level $\ell$ and that of $\sigma$-twisted $L_{\wh{\CC_\g}}(\ell,0)$-modules.
Furthermore, together with the integrable highest
weight $\widehat{\mathcal{G}}(\mathcal{J})$-modules constructed in \cite{CCT} (see also \cite{CT}), we classify  the irreducible $\frac{1}{2}\N$-graded $\sigma$-twisted $L_{\wh{\CC_\g}}(\ell,0)$-modules in terms of integrable highest
weight $\widehat{\mathcal{G}}(\mathcal{J})$-modules (see Theorem \ref{g-irr-in-re-m}).

Associated to $\widehat{\mathcal{G}}(\mathcal{J})$, there is an extended affine Lie algebra $\widetilde{\mathcal{G}}(\mathcal{J})$
such that its core is isomorphic to $\widehat{\mathcal{G}}(\mathcal{J})$.
In \cite[Theorem 6.1]{BL}, the algebra $\widetilde{\mathcal{G}}(\mathcal{J})$ is realized as
the fixed point subalgebra of a toroidal extended affine Lie algebra under two commutating automorphisms $\sigma_0$ and $\sigma_1$.
Based on this realization, a class of irreducible $\widetilde{\mathcal{G}}(\mathcal{J})$-modules were constructed by using certain $\sigma_0$-twisted modules for a quotient of the full toroidal vertex algebra.
By taking restriction, \cite[Theorem 6.1]{BL} yields a realization of $\widehat{\mathcal{G}}(\mathcal{J})$  as
the fixed point subalgebra of $\mathfrak{t}$ under the two commutating automorphisms $\sigma_0|_{\mathfrak{t}}$ and $\sigma_1|_{\mathfrak{t}}$.
However,   we can not obtain the desired associations between $\hat{\mathcal{G}}(\mathcal{J})$ and the toroidal vertex algebra based on this realization.
This why we present in this paper a new realization of $\widehat{\mathcal{G}}(\mathcal{J})$  as
the fixed point subalgebra of $\mathfrak{t}$ under a {\em single} automorphism $\hat\sigma$.

Now we give an outline of this paper. In Section 2, we introduce the definition of the baby TKK algebra $\widehat{\mathcal{G}}(\mathcal{J})$ and
present a triangular decomposition of it.
In Section 3, we review the notion of twisted Lie algebras of Lie conformal algebras  as well as the notion of twisted modules of vertex algebras.
The connection between these two notions are also formulated.
In Section 4, we associate restricted $\widehat{\mathcal{G}}(\mathcal{J})$-modules of level $\ell$ to $\sigma$-twisted modules of $V_{\wh{\CC_\g}}(\ell,0)$.
In Section 5, we associate integrable restricted $\widehat{\mathcal{G}}(\mathcal{J})$-modules of level $\ell$ to $\sigma$-twisted $L_{\wh{\CC_\g}}(\ell,0)$-modules and classify the irreducible $\frac{1}{2}\N$-graded $\sigma$-twisted $L_{\wh{\CC_\g}}(\ell,0)$-modules.

In this paper, we let $\Z$, $\Z_{+}$, $\Z^{\times}$, $\N$ and $\C$ denote
 the set of integers, positive integers, nonzero integers, nonnegative integers and complex numbers, respectively.

\section{Baby TKK algebra}
In this section, we review some basics on the baby Tits-Kantor-Koecher algebra (cf.\,\cite{AABGP,T}).

 First we recall the Jordan algebra associated to the non-lattice semi-lattice in $\R^2$.
Set $e_1=(1,0)$ and $e_2=(0,1)$ in $\Z^2$, and also set
\begin{equation*}
S_0=2\Z^2, \quad S_1=e_1+2\Z^2, \quad S_2=e_2 + 2\Z^2.
\end{equation*}
Then $S=S_0\cup S_1 \cup S_2$ is a semi-lattice in $\Z^2$.
It is known that, associated to $S$, there is a  Jordan algebra  (\cite{AABGP})
\[\mathcal{J}=\op_{\rho \in S}\,\C\, x^{\rho}\qquad(\text{$
 x^{\rho}$ is a symbol})\]
with the multiplication  given by
\begin{eqnarray*}x^{\rho }\cdot x^{\tau}=\left\{\begin{array}{ll}
                   x^{\rho +\tau}, &  \text{ if }\ \rho +\tau\in S,\\
                   0, &   \text{ if }\ \rho +\tau\in S^\bot,
                 \end{array}\right.
\end{eqnarray*}
where $S^{\bot}:=\Z^2 \setminus S=e_1+e_2 + 2\Z^2$.

For each $a\in \mathcal{J}$, write $L_a$ for the  left multiplication  by $a$.
Denote by
$$
\text{Inder}(\mathcal{J})=\text{Span}\{[L_a,L_b]\mid a,b\in \mathcal{J}\}
$$
the space of inner derivations on $\mathcal{J}$, and denote by
$\mathcal{I}$ the subspace of $\mathcal{J}\ot\mathcal{J}$ spanned by the following elements:
\begin{equation*}
a\ot b+b\ot a \quad \text{and} \quad a\cdot b\ot c +b\cdot c\ot a+c\cdot a\ot b,
\end{equation*}
where $a,b,c\in \mathcal{J}$. Form the quotient space
$$\langle \mathcal{J},\mathcal{J} \rangle :=\mathcal{J}\ot \mathcal{J}/\mathcal{I}.$$
For $a,b\in \mathcal{J}$, we will use the notation  $\langle a,b\rangle$ to denote
 the image of $a\ot b$ under the quotient map $\mathcal{J}\ot\mathcal{J}\longrightarrow \langle \mathcal{J},\mathcal{J}\rangle$.

As usual, we write $\mathfrak{sl}_2$ for the $3$-dimensional complex simple Lie algebra.
  The baby TKK algebra is by definition the Lie algebra (\cite{T})
\[\widehat{\mathcal{G}}(\mathcal{J})=(\mathfrak{sl}_2\ot \mathcal{J})\op \langle \mathcal{J},\mathcal{J} \rangle\]
with the Lie bracket given by
\begin{equation}
 \begin{split}
 &[A\ot a,B\ot b]= [A,B]\ot ab+2\text{tr}(AB)\langle a,b \rangle,
\\
& [\langle a,b \rangle,A\ot c]= A\ot [L_a,L_b]c ,\\
& [\langle a,b \rangle,\langle c,d \rangle]= \langle [L_a,L_b]c,d \rangle+\langle c,[L_a,L_b]d \rangle,\\
 \end{split}\end{equation}
where $a,b,c,d \in \mathcal{J}$, $A,B\in\mathfrak{sl}_2$ and $\mathrm{tr}$ is the trace form.
It is known that $\widehat{\mathcal{G}}(\mathcal{J})$ is the universal central extension of the TKK construction associated to $\mathcal{J}$ (\cite{T}).

We fix a standard Chevalley basis
$\{x_{+},x_{-},\alpha^{\vee} \}$ of $\mathfrak{sl}_2$ such that
\begin{equation*}
[x_{+},x_{-}]=\alpha^{\vee},\quad  [\alpha^{\vee},{x_{+}}]=2x_{+}\quad\text{and}\quad
[\alpha^{\vee},{x_{-}}]=- 2x_{-}.
\end{equation*}
For $\rho=(m,n)\in \Z^2$ and $i=1,2$, set
 \begin{equation*}\begin{split}
&x_{\pm}(\rho )=x_{\pm}(m,n):= \left\{\begin{array}{ll}
x_{\pm}\ot x^{\rho }, &\qquad\text{ if } \rho \in S,\\
                          0,&\qquad\text{ if } \rho \in S^{\bot},\\
                         \end{array}
                         \right.\\
&a^{\vee}(\rho )=\alpha^{\vee}(m,n):=\left\{\begin{array}{ll}
\alpha^{\vee} \ot x^{\rho }, &\text{ if } \rho \in S,\\
                         2\langle \ x^{e_1},x^{\rho -e_1} \rangle, & \text{ if } \rho \in S^{\bot},\\
                         \end{array}
                         \right.\\
&C_i(\rho)=C_i(m,n) :=\left\{\begin{array}{ll}
\langle \ x^{e_i},x^{\rho -e_i} \rangle, &\quad\text{ if } \rho \in S_0,\\
                         0, &\quad\text{ if } \rho  \notin S_{0}, \\
                         \end{array}
                         \right.\\
   &\Omega(\rho )=\Omega(m,n):=\frac{1}{2}((-1)^{m}-(-1)^{n})=\left\{\begin{array}{ll}
   -1,  & \text{ if } \rho  \in S_{1}, \\
    1,  & \text{ if } \rho  \in S_{2},\\
    0, &\text{ if } \rho \in S_0\cup S^{\bot}.\\
                         \end{array}
                         \right.
\end{split}\end{equation*}
Then the Lie algebra $\widehat{\mathcal{G}}(\mathcal{J})$ is spanned by the set
$$\{x_{\pm}(\rho ),\,\alpha^{\vee}(\tau),\,C_i(\upsilon)\mid\rho\in S,\,\tau\in \Z^2, \,\upsilon\in S_0,\ i=1,2\}$$
and subject to the following relations:
\begin{equation*}\begin{split}
(\text{R1}) \enspace \text{For}\ \rho&,\,\tau  \in\Z^2,\\
[\alpha^{\vee}&(\rho),\alpha^{\vee}(\tau)]
=\left\{\begin{array}{lll}
        -4\sum\limits_{i=1,2}\,(\rho\cdot e_i)\,C_i(\rho+\tau), &\text{ if } \rho ,\,\tau\notin S,\\
       4\sum\limits_{i=1,2}\,(\rho\cdot e_i)\,C_i(\rho+\tau), &\text{ if } \rho ,\,\tau\in S , \ \rho+\tau\in S,\\
        2\Omega(\tau)\, \alpha^{\vee}(\rho+\tau), &\text{ if } \rho\notin S,\,\tau\in S \ \text{or}\  \rho,\,\tau\in S , \ \rho+\tau\notin S .\\
          \end{array}
\right.\\
(\text{R2}) \enspace \text{For}\ \rho&\in \Z^2,\, \tau  \in S,\\
&[\alpha^{\vee}(\rho),x_{\pm}(\tau)]=\left\{\begin{array}{lll}
       \pm 2x_{\pm}\,(\rho+\tau), &\text{ if } \rho \in S, \\
       2\Omega(\tau)\,x_{\pm}\,(\rho+\tau), &\text{ if } \rho \notin S. \\
          \end{array}
\right.\\
(\text{R3}) \enspace \text{For}\ \rho&,\, \tau  \in S,\\
&[x_{\pm}(\rho),x_{\pm}(\tau)]=0,\\
&[x_{+}(\rho),x_{-}(\tau)]=\left\{\begin{array}{lll}
       \Omega(\tau)\alpha^{\vee}(\rho+\tau), &\text{ if } \rho +\tau\notin S ,\\
       \alpha^{\vee}(\rho+\tau) + 2\sum\limits_{i=1,2}(\rho\cdot e_i)C_{i}(\rho+\tau), &\text{ if } \rho+\tau \in S.\\
     \end{array}
\right.\\
(\text{R4}) \enspace \text{For}\ \upsilon&\in S_0,\, i=1,2,  \  C_i(\upsilon) \ \text{are central and subject to } \\
&\hspace{3cm} (\upsilon\cdot e_1)C_1(\upsilon)+(\upsilon\cdot e_2)C_2(\upsilon)=0.
\end{split}
\end{equation*}
where, for $a=(a_1,a_2)$ and $b=(b_1,b_2)$ in  $\Z^2$,  $a\cdot b=a_1b_1+a_2b_2$ is their standard inner product.

By adding two derivations $\text{d}_1,\text{d}_2$ to  $\widehat{\mathcal{G}}(\mathcal{J})$, one obtains the extended baby TKK algebra
$$\mathscr{L}=\widehat{\mathcal{G}}(\mathcal{J})\op \C\text{d}_1\op \C\text{d}_2,$$
where
$$
[\text{d}_i,A\ot x^{\rho}]=(\rho\cdot e_i)\,A\ot x^{\rho},\quad
[\text{d}_i,\langle x^{\rho},\,x^{\tau}\rangle]=((\rho+\tau)\cdot e_i)\,\langle x^{\rho},x^{\tau}\rangle$$
for $A\in \mathfrak{sl}_2,\, \rho,\,\tau\in \Z^2$  and  $i=1,2$.
We fix a $\Z$-grading on $\widehat{\mathcal{G}}(\mathcal{J})$ with respect to the adjoint action of $-\text{d}_1$. Namely,
\begin{equation}\label{zgradtkk}
\widehat{\mathcal{G}}(\mathcal{J})=\op_{n\in\Z}\widehat{\mathcal{G}}(\mathcal{J})_{(n)}\,, \ \text{ where}\ \
 \widehat{\mathcal{G}}(\mathcal{J})_{(n)}=\{u\in \widehat{\mathcal{G}}(\mathcal{J})\mid[\text{d}_1,u]=-nu\}.
\end{equation}

Set
\begin{equation*}
\h=\C\alpha^{\vee}\op \C C_1\op \C C_2 \op \C \text{d}_1 \op   \C\text{d}_2,
\end{equation*}
 an abelian subalgebra of  $\mathscr{L}$.
 Let $\alpha$ be the simple root of $\mathfrak{sl_2}$ for which $\alpha(\alpha^{\vee})=2$. Extend $\alpha$ to an element of $\h^{*}$ (the dual space of $\h$) such that
 $$
 \alpha(C_{i})=\alpha(\text{d}_i)=0 \quad \text{for}\ i=1,2.
 $$
Define  $\delta_{1},\delta_{2}\in \h^{*}$ by letting
\begin{equation*}
\delta_{i}(\alpha^{\vee})=0,\quad\delta_{i}(C_{j})=0,\quad \delta_{i}(\text{d}_j)=\delta_{i,j}\quad\text{for} \ i,j=1,2.
\end{equation*}

 With respect to $\h$,  $\mathscr{L}$ admits a root space decomposition as follows:
\begin{equation*}
\mathscr{L}=\bigoplus_{\beta\in\Delta}\mathscr{L}_{\beta},\qquad \mathscr{L}_{\beta}:=\{x\in \mathscr{L}\mid [h,x]=\beta(h)x ~~\text {for ~~~ all} ~~h\in \h \}.
\end{equation*}
where \[\Delta=\{\pm \alpha +m_1\delta_1+m_2\delta_2\mid (m_1,m_2)\in S\}\cup \{n_1\delta_1+n_2\delta_2\mid (n_1,n_2)\in \Z^2\}.\]
On the other hand, by setting
 $\widehat{\mathcal{G}}(\mathcal{J})_{\beta}=\widehat{\mathcal{G}}(\mathcal{J})\cap \mathscr{L}_{\beta}\,$ for all $\beta\in \Delta$, we also have that
  \begin{equation*}
\widehat{\mathcal{G}}(\mathcal{J})=\bigoplus_{\beta\in\Delta}\widehat{\mathcal{G}}(\mathcal{J})_{\beta}.
\end{equation*}

We fix a decomposition
\begin{equation*}
\Delta=\Delta_{+}\cup\Delta_{0}\cup \Delta_{-}
\end{equation*}
 of $\Delta$, where
\begin{eqnarray*}
\Delta_+&=&\{\alpha+n\delta_2,\pm \alpha+m\delta_1+n\delta_2,m\delta_1+n\delta_2\mid m,n\in \Z\ \text{with}\ m>0\}\cap \Delta,\\
\Delta_-&=&\{-\alpha+n\delta_2,\pm \alpha+m\delta_1+n\delta_2,m\delta_1+n\delta_2\mid m,n\in \Z\ \text{with}\ m<0\}\cap \Delta,
\end{eqnarray*}
and $\Delta_{0}=\{n\delta_2\mid n\in \Z\}$.
It induces  a triangular decomposition
$$
\widehat{\mathcal{G}}(\mathcal{J})=
\widehat{\mathcal{G}}(\mathcal{J})^{+}
\op \mathcal{H} \op
\widehat{\mathcal{G}}(\mathcal{J})^{-}
$$
 of $\widehat{\mathcal{G}}(\mathcal{J})$, where
$\widehat{\mathcal{G}}(\mathcal{J})^{\pm}
=\bigoplus_{\beta\in\Delta_{\pm}}\widehat{\mathcal{G}}(\mathcal{J})_{\beta}$ and $\mathcal{H}=\bigoplus_{\beta\in\Delta_{0}}\widehat{\mathcal{G}}(\mathcal{J})_{\beta}$.
Explicitly, we have that
\begin{equation*}
\begin{split}
\widehat{\mathcal{G}}(\mathcal{J})^{\pm}&=
\sum_{\substack{(m,n)\in S\\ \pm m\geq 0,\ n\in\Z}}\C x_{\pm}(m,n)\  \op\sum_{\substack{(m,n)\in S\\ \pm m> 0,\ n\in\Z}}\C x_{\mp}(m,n)
\ \op  \sum_{ \pm m> 0,\ n\in\Z}\C \alpha^{\vee}(m,n)\\
 &\op\sum_{ \pm m> 0,\ n\in\Z^{\times}}\C C_1(2m,2n)\op\sum_{ \pm m> 0}\C C_2(2m,0),
\end{split}
\end{equation*}
 and $$\CH=\sum_{n\in \Z}\C \alpha^{\vee}(0,n)\op\sum_{n\in \Z}\C C_1(0,2n)\op \C C_2.$$

\section{Lie conformal algebras and twisted modules for vertex algebras}
In this section,  we recall and present some of results on Lie conformal algebras and twisted modules for vertex algebras.
We start with the following definition of Lie conformal algebra \cite{K}, also known as a vertex Lie algebra \cite{P} (cf. \cite{DLM1}).

\begin{definition}
{\em A {\em Lie conformal algebra} is a vector space $\mathcal{C}$, together with an endomorphism $\partial$ of $\mathcal{C}$, and  a linear map
\begin{eqnarray}
\label{E1}
Y_{-}(\cdot,z): \mathcal{C}\rightarrow\Hom(\mathcal{C},z^{-1}\mathcal{C}[z^{-1}]),\quad a\mapsto Y_{-}(a,z)=\sum_{n\ge0}a_{n}z^{-n-1}
\end{eqnarray}
such that
\begin{eqnarray}
\label{E2}&&[\partial,Y_{-}(a,z)]=\frac{d}{dz}Y_{-}(a,z)=Y_{-}(\partial a,z),\\
\label{E3}&&Y_{-}(a,z)b=\te{Sing}(e^{z\partial }Y_{-}(b,-z)a),\\
\label{E4}&&[Y_{-}(a,z_{1}),Y_{-}(b,z_{2})]=\te{Sing}(Y_{-}(Y_{-}(a,z_{1}-z_{2})b,z_{2})),
\end{eqnarray}
where $a,b\in \mathcal{C}$ and \te{Sing} stands for the singular part.  }
\end{definition}

The following result can be found in \cite[Remark 4.2]{P}.

\begin{lem}\label{cla-la}
Let $\mathcal{C}$ be a vector space equipped with a linear operator $\partial:\ \mathcal{C}\rightarrow\mathcal{C}$ and a linear map $Y_{-}$ as in \eqref{E1} such that \eqref{E2} holds.  Then $\mathcal{C}$ is a Lie conformal  algebra if and only if
\begin{eqnarray}
\widehat{\mathcal{C}}=(\mathcal{C}\otimes \C[t,t^{-1}])/\te{Image}(\partial\otimes 1+1\otimes \frac{d}{dt})
\end{eqnarray} is a Lie algebra under the
 Lie bracket
\begin{eqnarray}
\label{lb}[a(m),b(n)]=\sum_{i\ge0}\binom{m}{i}(a_{i}b)(m+n-i)
\end{eqnarray}
for $a,b\in \mathcal{C},\ m,n\in\Z,$ where
$\binom{m}{i}=\frac{m(m-1)\cdots(m-i+1)}{i!}$ and
$a(m)$ denotes the image of $a\ot t^{m}$ under the  quotient map $\CC \ot \C[t,t^{-1}]\rightarrow \widehat{\CC}.$
\end{lem}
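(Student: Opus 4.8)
The plan is to translate everything into the language of $\widehat{\mathcal{C}}$-valued formal distributions and reduce each Lie-algebra axiom to the corresponding generating-function identity. For $a\in\mathcal{C}$ I set $a(z)=\sum_{m\in\Z}a(m)z^{-m-1}$, and I record the two consequences of the standing hypothesis \eqref{E2}: on $\mathcal{C}$ it gives $(\partial a)_i=-i\,a_{i-1}$, while in $\widehat{\mathcal{C}}$ the defining relation $\te{Image}(\partial\ot 1+1\ot\frac{d}{dt})=0$ reads $(\partial a)(m)=-m\,a(m-1)$, equivalently $(\partial a)(z)=\partial_z a(z)$. First I would settle well-definedness of \eqref{lb} on $\widehat{\mathcal{C}}$, which needs only \eqref{E2}: substituting $(\partial a)_i=-i\,a_{i-1}$ (and its companion $a_i(\partial b)=\partial(a_ib)+i\,a_{i-1}b$) together with the relation $(\partial c)(k)=-k\,c(k-1)$, and collapsing by the absorption identity $(i+1)\binom{m}{i+1}=m\binom{m-1}{i}$, shows that $[(\partial a)(m)+m\,a(m-1),\,b(n)]$ and $[a(m),(\partial b)(n)+n\,b(n-1)]$ vanish, so the bracket descends to the quotient. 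A direct check on Fourier coefficients then rewrites \eqref{lb} in the local form
\[
[a(z),b(w)]=\sum_{i\ge0}\Big(\tfrac{1}{i!}\partial_w^{\,i}\delta(z-w)\Big)(a_ib)(w),\qquad \delta(z-w)=\sum_{s\in\Z}w^{s}z^{-s-1}.
\]
It then remains to match antisymmetry with \eqref{E3} and the Jacobi identity with \eqref{E4}.

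For antisymmetry I would compare the displayed expression with $-[b(w),a(z)]$, rewriting the latter by means of the symmetry $\delta(z-w)=\delta(w-z)$ and the standard rules $\partial_z^{\,i}\delta(z-w)=(-1)^i\partial_w^{\,i}\delta(z-w)$ and $f(z)\partial_w^{\,i}\delta(z-w)=\sum_{k}\binom{i}{k}(\partial_w^{\,k}f)(w)\,\partial_w^{\,i-k}\delta(z-w)$, and then collect the coefficient of each $\partial_w^{\,j}\delta(z-w)$. By the uniqueness of such a decomposition, antisymmetry of \eqref{lb} for all integer modes is equivalent to the resulting family of coefficient identities, which after resummation is precisely the mode form $a_ib=\sum_{k\ge0}\frac{(-1)^{k+i+1}}{k!}\partial^{k}(b_{k+i}a)$ of the skew-symmetry \eqref{E3}; this yields both implications at once.

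The Jacobi identity against \eqref{E4} is where I expect the real work. Extracting the coefficient of $z_1^{-s-1}z_2^{-t-1}$ from \eqref{E4} gives the operator commutator formula $[a_s,b_t]=\sum_{i\ge0}\binom{s}{i}(a_ib)_{s+t-i}$ on $\mathcal{C}$, valid for $s,t\ge0$. In the forward direction I would expand the Jacobi combination $[a(l),[b(m),c(n)]]-[b(m),[a(l),c(n)]]-[[a(l),b(m)],c(n)]$ by \eqref{lb} into a quadruple sum, use this formula (with nonnegative conformal modes) to rewrite $a_i(b_jc)-b_j(a_ic)$, and check that the total vanishes; the combinatorics collapse via the subset-of-a-subset identity $\binom{l}{i}\binom{i}{p}=\binom{l}{p}\binom{l-p}{i-p}$ followed by the Vandermonde convolution $\sum_{i}\binom{l-p}{i}\binom{m}{r-i}=\binom{l+m-p}{r}$, and since these hold as polynomial identities in the outer modes $l,m$, no positivity constraint on $l,m,n$ is needed. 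The genuine obstacle is the converse: from the abstract Jacobi identity in $\widehat{\mathcal{C}}$ one must recover the operator identity \eqref{E4} on $\mathcal{C}$, and here the elements $d(k)$ are not independent, because of the relation $(\partial d)(k)=-k\,d(k-1)$. I would resolve this by again passing to generating functions: the Jacobi identity becomes an equality of formal distributions in $z_1,z_2$ that is local, and the uniqueness of the decomposition of a local distribution in the form $\sum_{j\ge0}c^{j}(z_2)\,\partial_{z_2}^{\,j}\delta(z_1-z_2)$ lets me read off coefficients and recover exactly the commutator formula, hence \eqref{E4}. Assembling the three equivalences then gives the lemma.
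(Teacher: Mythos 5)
The paper itself does not prove this lemma---it is quoted from \cite[Remark 4.2]{P}---so your proposal has to stand on its own. Its forward half does: deriving well-definedness of \eqref{lb} from \eqref{E2} alone, the local form of the bracket, and the collapse of the Jacobi identity via $\binom{l}{i}\binom{i}{p}=\binom{l}{p}\binom{l-p}{i-p}$ and Chu--Vandermonde are all correct. The genuine gap is in the converse, at exactly the step you yourself call the real obstacle, and the fix you propose does not close it. Uniqueness of the decomposition of a local distribution as $\sum_{j}c^j(z_2)\,\partial_{z_2}^{j}\delta(z_1-z_2)$ does let you match coefficients, but those coefficients are $\widehat{\mathcal{C}}$-valued distributions; so what antisymmetry of \eqref{lb} actually yields is, for each $i\ge 0$,
\begin{equation*}
(a_ib)(m)=\Big(\sum_{k\ge0}\tfrac{(-1)^{k+i+1}}{k!}\,\partial^{k}(b_{k+i}a)\Big)(m)\qquad\text{for all }m\in\Z,
\end{equation*}
an identity in $\widehat{\mathcal{C}}$, and likewise for Jacobi you only recover the commutator formula with every term hit by $(\cdot)(m)$. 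But \eqref{E3} and \eqref{E4} are identities in $\mathcal{C}$, and since the quotient map $\mathcal{C}\otimes\C[t,t^{-1}]\to\widehat{\mathcal{C}}$ is far from injective, nothing formal lets you strip the modes off. The dependence among the elements $d(k)$ that you flagged is not resolved by the decomposition theorem, which operates entirely inside $\widehat{\mathcal{C}}$; what is needed is the separate statement that a single element $c\in\mathcal{C}$ whose modes all vanish (equivalently, with $c(-1)=0$) must be zero.

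That statement is precisely the linear isomorphism $\mathcal{C}\cong\widehat{\mathcal{C}}_{-}$, $a\mapsto a(-1)$, of \cite[Theorem 4.6]{P}, quoted in the paper as \eqref{id-c}; it is the indispensable ingredient your proposal never supplies. It is elementary: if $a\otimes t^{-1}=(\partial\otimes1+1\otimes\frac{d}{dt})\big(\sum_jb_j\otimes t^j\big)$, then comparing coefficients of $t^{j}$ for $j\le-2$ gives $b_{j+1}=-\frac{1}{j+1}\partial b_j$, so finiteness forces $b_j=0$ for all $j\le -1$, and the coefficient of $t^{-1}$ then gives $a=\partial b_{-1}=0$. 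Once this lemma is inserted, both of your converse steps close, since equality of all modes lifts to equality in $\mathcal{C}$. In fact, with $\widehat{\mathcal{C}}=\widehat{\mathcal{C}}_{-}\oplus\widehat{\mathcal{C}}_{+}$ and $\widehat{\mathcal{C}}_{-}\cong\mathcal{C}$ in hand you can avoid multi-variable distribution calculus in the Jacobi step altogether: specialize the Jacobi identity for $[a(s),[b(t),c(-1)]]$ with $s,t\ge 0$, observe that every term except those with top conformal mode lies in $\widehat{\mathcal{C}}_{+}$, and project onto $\widehat{\mathcal{C}}_{-}$ to read off $a_s(b_tc)-b_t(a_sc)=\sum_{i}\binom{s}{i}(a_ib)_{s+t-i}c$, which is \eqref{E4}; the same specialization at $n=-1$ recovers \eqref{E3} from antisymmetry.
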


Note that in terms of generating functions
$$
a(z)=\sum_{n\in\Z}a(n)z^{-n-1}\quad \text{for}\  a\in \CC,
$$ the relation \eqref{lb} can be written as
\begin{eqnarray}
[\,a(z_{1}), b(z_{2})\,]=\sum_{i\ge 0}(a_{i}b)(z_{2})\,\frac{1}{i!}\left(\frac{\partial}{\partial z_{2}}\right)^{i}\left(z_{1}^{-1}\delta\left(\frac{z_{2}}{z_{1}}\right)\right).
\end{eqnarray}

Let $\CC$ be a Lie conformal algebra.  Set
$$\widehat{\CC}_{-}=\text{Span}\{a(n)\mid a\in \CC, n<0\}\quad \text{and}\quad \widehat{\CC}_{+}=\text{Span}\{a(n)\mid a\in \CC, n\geq 0\}.$$
It is obvious that
$
 \widehat{\CC}= \widehat{\CC}_{-}\op \widehat{\CC}_{+}
$ is
a direct sum of Lie subalgebras.
And, it was proved in \cite[Theorem 4.6]{P} that the map
\begin{eqnarray}
\label{id-c}\mathcal{C}\rightarrow \widehat{\mathcal{C}}_{-},\quad a\mapsto a(-1)
\end{eqnarray}
is a linear isomorphism. Form the induced $\widehat{\mathcal{C}}$-module
$$V_{\widehat{\CC}}=\mathcal{U}(\widehat{\mathcal{C}})\otimes_{\mathcal{U}(\widehat{\mathcal{C}}_{+})}\C,$$
where  $\C$ is viewed as the trivial  $\widehat{\mathcal{C}}_{+}$-module.
Set $\mathbbm{1}=1\otimes 1$. Identify $\CC$ as a subspace of $V_{\widehat{\mathcal{C}}}$ by the linear map $a\mapsto a(-1)\mathbbm{1}$.
 From \cite{P}, there exists a unique vertex algebra structure on $V_{\widehat{\mathcal{C}}}$ with $\mathbbm{1}$ as the vacuum vector and with
$Y(a,z)=a(z)$
for $a\in\CC $.

Note that the operator $\partial$ on $\CC$ induces a derivation, still called $\partial$, of $\widehat{\mathcal{C}}$ such that
\[\partial(a(n))=(\partial a)(n)=-na(n-1)\quad \te{for }a\in\CC,\ n\in\Z.\]
Extend $\partial$  to an (associative) derivation of $\U(\widehat{\mathcal{C}})$.
Since $\partial(\widehat{\mathcal{C}}_{+})\subset \widehat{\mathcal{C}}_{+}$, it induces a linear operator
on $V_{\widehat{\mathcal{C}}}$, which coincides with the canonical derivation $\mathcal{D}$ on $V_{\widehat{\mathcal{C}}}$ defined by $v\mapsto v_{-2}\mathbbm{1}$.

 A {\em homomorphism} of a  Lie conformal algebra $\CC$ is a linear endomorphism  $\sigma$ on $\CC$ such that
 \begin{eqnarray*}
\sigma\circ \partial=\partial\circ\sigma\quad\te{and}
\quad\sigma (Y_{-}(a,z)b)=Y_{-}(\sigma (a),z)\sigma(b)\quad\te{for } a,b\in\mathcal{C}.
\end{eqnarray*}
The following result was proved in \cite[Lemma 3.10]{CLiTW}.
\begin{lem}\label{au-cla} Let $\mathcal{C}$ be a Lie conformal algebra and let $\sigma$
be a linear automorphism of $\mathcal C$ viewed as a vector space
such that $\sigma\circ \partial=\partial\circ\sigma$. Then $\sigma$ is an automorphism of $\mathcal{C}$ if and only if
the map
\begin{eqnarray*}
\widehat{\sigma}:\widehat{\mathcal{C}}\rightarrow\widehat{\mathcal{C}},\quad a(m)\mapsto\sigma(a)(m)\quad  (a\in\mathcal{C},m\in\Z)
\end{eqnarray*}
is a Lie automorphism of
 $\widehat{\mathcal{C}}$.
\end{lem}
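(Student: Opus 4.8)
The plan is to reduce both sides of the equivalence to their ``multiplicative'' cores and compare them through the bracket \eqref{lb}. First I would record two preliminary reductions. Since $\sigma\circ\partial=\partial\circ\sigma$ is part of the hypothesis, the operator $\sigma\ot 1$ commutes with $\partial\ot 1+1\ot\frac{d}{dt}$ on $\CC\ot\C[t,t^{-1}]$ and hence preserves its image; thus $\widehat{\sigma}$ is a well-defined linear map, and it is bijective with inverse $\widehat{\sigma^{-1}}$. Moreover, comparing coefficients in \eqref{E1} shows that $\sigma$ is an automorphism of $\CC$ if and only if
\[\sigma(a_nb)=\sigma(a)_n\sigma(b)\qquad(a,b\in\CC,\ n\ge 0),\]
while, $\widehat{\sigma}$ being already a linear bijection, it is a Lie automorphism of $\widehat{\CC}$ if and only if it preserves the bracket \eqref{lb}. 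The whole statement thus amounts to comparing these two families of identities.

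For the forward direction, assume $\sigma$ is an automorphism of $\CC$, so $\sigma(a_ib)=\sigma(a)_i\sigma(b)$ for all $i\ge 0$. Substituting this termwise into \eqref{lb} gives $\widehat{\sigma}[a(m),b(n)]=[\widehat{\sigma}a(m),\widehat{\sigma}b(n)]$, and since the elements $a(m)$ span $\widehat{\CC}$, the map $\widehat{\sigma}$ preserves the bracket and is therefore a Lie automorphism.

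The converse is the delicate direction, precisely because $\widehat{\CC}$ is a quotient of $\CC\ot\C[t,t^{-1}]$ and one cannot simply equate ``coefficients''. Assuming $\widehat{\sigma}$ preserves \eqref{lb}, put $c_i:=\sigma(a_ib)-\sigma(a)_i\sigma(b)\in\CC$; only finitely many are nonzero since $Y_{-}(a,z)$ lands in $z^{-1}\CC[z^{-1}]$. The hypothesis then reads
\[\sum_{i\ge 0}\binom{m}{i}\,c_i(m+n-i)=0\quad\text{in }\widehat{\CC}\qquad(m,n\in\Z),\]
and the goal is to force $c_i=0$ for all $i$. The plan is to project onto $\widehat{\CC}_{-}$ along $\widehat{\CC}=\widehat{\CC}_{-}\op\widehat{\CC}_{+}$ and to use that, iterating the relation $(\partial c)(k)=-k\,c(k-1)$, one has $c(-1-j)=\frac{1}{j!}(\partial^{j}c)(-1)$, so that the isomorphism \eqref{id-c} identifies $\widehat{\CC}_{-}$ with $\CC$ by sending $c(-1-j)$ to $\frac{1}{j!}\partial^{j}c$. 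Concretely, fix $j\ge 0$ and specialize to $m+n=j-1$; the terms of the displayed identity that lie in $\widehat{\CC}_{-}$ are exactly those with $i\ge j$, so under the above identification the $\widehat{\CC}_{-}$-component reads
\[\sum_{i\ge j}\binom{m}{i}\,\frac{1}{(i-j)!}\,\partial^{i-j}c_i=0\quad\text{in }\CC\qquad(m\in\Z).\]
Since the $\binom{m}{i}$ are $\C$-linearly independent as polynomials in $m$, every coefficient vanishes; the $i=j$ term carries $\partial^{0}=\mathrm{id}$ and therefore forces $c_j=0$. As $j$ is arbitrary, $\sigma(a_ib)=\sigma(a)_i\sigma(b)$ for all $i$, i.e.\ $\sigma$ is an automorphism of $\CC$.

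The step I expect to be the main obstacle is exactly this last extraction: vanishing of a combination in the quotient $\widehat{\CC}$ does not by itself make its coefficients vanish, so the argument must use both the splitting $\widehat{\CC}=\widehat{\CC}_{-}\op\widehat{\CC}_{+}$ and the explicit form of the defining relations. The device that makes it go through is the choice $m+n=j-1$, which places the $i=j$ term exactly at ``position $-1$'', where \eqref{id-c} applies with no power of $\partial$ attached and the binomial coefficients can be separated.
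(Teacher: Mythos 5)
Your proof is correct. Note that the paper itself contains no argument for this lemma --- it is quoted verbatim from \cite[Lemma 3.10]{CLiTW} --- so there is no internal proof to compare against; judged on its own terms, your argument is complete. The forward direction is the routine substitution into \eqref{lb}, and your converse addresses the genuine difficulty correctly: vanishing of $\sum_{i\ge 0}\binom{m}{i}c_i(m+n-i)$ in the quotient $\widehat{\CC}$ does not by itself kill the coefficients, and you resolve this by projecting onto $\widehat{\CC}_{-}$ along the splitting $\widehat{\CC}=\widehat{\CC}_{-}\op\widehat{\CC}_{+}$, rewriting $c(-1-k)=\frac{1}{k!}(\partial^{k}c)(-1)$, invoking the injectivity of \eqref{id-c} (Primc's theorem), and then separating the binomials $\binom{m}{i}$ via their linear independence as polynomials in $m$; the choice $m+n=j-1$ that places the $i=j$ term at position $-1$ is exactly what makes this work. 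One remark: the same conclusion follows from a lighter induction that avoids both the specialization and the independence argument. For $m\ge 0$ the sum in \eqref{lb} truncates at $i=m$, so $[a(0),b(n)]=(a_{0}b)(n)$ yields $c_{0}(n)=0$ for all $n$, whence $c_{0}=0$ by \eqref{id-c}; and if $c_{i}=0$ for all $i<m$, the bracket of $a(m)$ with $b(n)$ leaves only $c_{m}(n)=0$ for all $n$, whence $c_{m}=0$. Both routes rest on the same essential input, namely the isomorphism \eqref{id-c}, so the difference is one of packaging rather than substance.
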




Suppose  that $\CC$ is a Lie conformal algebra and $\sigma$ is an automorphism of $\CC$  with finite order $T$.
Following \cite{KW}, there is a  Lie algebra
\begin{eqnarray*}
\widehat{\mathcal{C}}[\sigma]=\bigoplus_{r\in\Z}(\CC^{(r)}\otimes t^{\frac{r}{T}})/\te{Image}(\partial\otimes1+1\otimes \frac{d}{dt})
\end{eqnarray*}
associated to the pair $(\mathcal{C},\sigma)$,
where $\CC^{(r)}=\{a\in \CC\mid\sigma(a)=\omega^{r}a\}$ and $\omega=e^{2\pi\sqrt{-1}/T}$.
When $\sigma=\mathrm{id}$, it coincides with the  algebra $\wh\CC$ defined in Lemma \ref{cla-la}.
For  $r\in \Z$ and $a \in\CC^{(r)}$, set
 $$a\left(\frac{r}{T}\right):=a\ot t^{\frac{r}{T}}+\te{Image}(\partial\otimes1+1\otimes \frac{d}{dt}).$$ Then
 the Lie bracket on $\widehat{\mathcal{C}}[\sigma]$ is determined by
\begin{eqnarray}\label{t-lb}
[\,a\left(\frac{r}{T}\right),\,b\left(\frac{s}{T}\right)\,]=\sum_{i\ge 0}\binom{\frac{r}{T}}{i}(a_{i}b)\left(\frac{r+s}{T}-i \right),
\end{eqnarray}
where $a\in\mathcal{C}^{(r)},\ b\in\mathcal{C}^{(s)}$ and $r,\,s\in \Z$.

  For every $a\in \CC$, form the generating function
\begin{align}
a^{\sigma}(z)=\sum_{r\in\Z} a^{(r)}\left(\frac{r}{T}\right) z^{-\frac{r}{T}-1},
\end{align}
where $a^{(r)}=\frac{1}{T}\sum\limits_{k=0}^{T-1}\omega^{-rk}\sigma^k(a)\in \CC^{(r)}$.

\begin{definition}{\em A $\widehat{\mathcal{C}}[\sigma]$-module $W$ is said to be {\em restricted } if
 $$a^{\sigma}(z)\in \Hom(W,W((z^{\frac{1}{T}}))) \quad \text{for all}\ a\in\CC . $$}
\end{definition}

In view of Lemma \ref{au-cla}, we know that $\sigma$ can be uniquely extended  to an automorphism, still called $\sigma$, of vertex algebra $V_{\widehat{\CC}}$
such that
\begin{align}\label{extendsigma}
\sigma(\mathbbm{1})=\mathbbm{1}\quad\te{and}\quad\sigma(a^{1}_{n_{1}}\cdots a^{r}_{n_{r}}\mathbbm{1})=(\sigma a^{1})_{n_{1}}\cdots (\sigma a^{r})_{n_{r}}\mathbbm{1},
\end{align}
where  $a^{i}\in\mathcal{C},\ n_{i}\in\Z$ and $r\in\Z_{+}.$
Note that  $\sigma$ still has order $T$ as an automorphism of $V_{\widehat{\CC}}$.

\begin{definition} {\em Let $(V,Y,\mathbbm{1})$ be a vertex algebra,  and
$\sigma$ be an
automorphism of $V$ with finite order $T$.
  A {\em $\sigma$-twisted $V$-module}
 is a vector space $W$ equipped with a linear map
\begin{eqnarray*}
Y_{W}(\cdot,z):V\rightarrow\Hom(W,W((z^{\frac{1}{T}}))),\quad
v\mapsto Y_{W}(v,z)=\sum_{n\in\Z}v_{\frac{n}{T}}z^{-\frac{n}{T}-1}
\end{eqnarray*}
such that
 $Y_{W}(\mathbbm{1},z)=\mathrm{Id}_{W}$
and for $u,v\in V$
\begin{eqnarray}
&&z_{0}^{-1}\delta\left(\frac{z_{1}-z_{2}}{z_{0}}\right)Y_{W}(u,z_{1})Y_{W}(v,z_{2})-z_{0}^{-1}
\delta\left(\frac{z_{2}-z_{1}}{-z_{0}}\right)Y_{W}(v,z_{2})Y_{W}(u,z_{1})\nno\\
\label{twisted-jacobi}&&\hspace{1.5cm}=z_{2}^{-1}\sum_{i=0}^{T-1}\frac{1}{T}
\delta\left(\left(\frac{z_{1}-z_{0}}{z_{2}}\right)^{\frac{1}{T}}\omega^{i}\right)Y_{W}(Y(\sigma^{i}(u),z_{0})v,z_{2}).
\end{eqnarray}}
\end{definition}

Then we have the following result (\cite{KW,Li2}).

\begin{thm}\label{re-m}
 Let $\mathcal{C}$ be a  Lie conformal algebra equipped with a finite order automorphism
 $\sigma$. Then any restricted $\widehat{\mathcal{C}}[\sigma]$-module $W$ is naturally a $\sigma$-twisted $V_{\widehat{\mathcal{C}}}$-module with $Y_{W}(a,z)=a^{\sigma}(z)$ for $a\in\CC$. On the other hand, any $\sigma$-twisted $V_{\widehat{\mathcal{C}}}$-module $W$ is naturally a restricted $\widehat{\mathcal{C}}[\sigma]$-module with $a^{\sigma}(z)=Y_{W}(a,z)$ for $a\in \CC$.
\end{thm}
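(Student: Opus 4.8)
The plan is to establish both directions by passing between the defining Lie bracket \eqref{t-lb} of $\widehat{\mathcal{C}}[\sigma]$ and the twisted Jacobi identity \eqref{twisted-jacobi}, exploiting that $V_{\widehat{\mathcal{C}}}$ is generated by $\mathcal{C}$ (via $a\mapsto a(-1)\mathbbm 1$) and spanned by the monomials $a^1_{n_1}\cdots a^r_{n_r}\mathbbm 1$. The datum common to both structures is the assignment $a\mapsto a^{\sigma}(z)$ on generators, so the two constructions will be seen to be mutually inverse once each is shown to exist.

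For the implication that a $\sigma$-twisted $V_{\widehat{\mathcal{C}}}$-module $W$ is restricted, I set $a^{\sigma}(z):=Y_{W}(a,z)$. The field condition $Y_{W}(v,z)\in\Hom(W,W((z^{\frac1T})))$ yields the restriction property at once. To see that the modes realize $\widehat{\mathcal{C}}[\sigma]$, decompose $a=\sum_{r}a^{(r)}$ into $\sigma$-eigenvectors; the monodromy of a $\sigma$-twisted module forces $Y_{W}(a^{(r)},z)$ to carry only the fractional powers in the congruence class determined by $r$, so its modes are naturally indexed by $a^{(r)}\!\left(\frac{r}{T}-k\right)$, $k\in\Z$. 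The relation \eqref{t-lb} is then extracted by taking $\Res_{z_{0}}$ in \eqref{twisted-jacobi} with $u=a^{(r)}$, $v=b^{(s)}\in\mathcal{C}$: the left side collapses to a commutator of the two fields, while on the right the $\delta$-functions weighted by the $\omega^{i}$ select the singular part of $Y(\sigma^{i}(a^{(r)}),z_{0})b^{(s)}$, which for elements of $\mathcal{C}$ is governed by $Y_{-}$ and hence produces exactly the terms $(a^{(r)}_{i}b^{(s)})$; the geometric sum over $i$ resums to the binomial coefficient $\binom{r/T}{i}$. Matching coefficients of the fractional powers yields \eqref{t-lb}.

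For the implication that a restricted $\widehat{\mathcal{C}}[\sigma]$-module is twisted, the task is to build $Y_{W}$ on all of $V_{\widehat{\mathcal{C}}}$ from the generating fields $a^{\sigma}(z)$. First I would record that these fields are mutually twisted-local, i.e.\ satisfy the twisted commutator formula, which is merely a repackaging of the bracket \eqref{t-lb} together with the Lie conformal axiom \eqref{E4}. With twisted locality available, I would invoke the general construction of twisted modules from local systems of twisted vertex operators (\cite{Li2}, cf.\ \cite{KW}): the twisted-local family generates, under the iterate (normally ordered) products, a space of fields closed under twisted multiplication and satisfying \eqref{twisted-jacobi}. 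Defining $Y_{W}(a^1_{n_1}\cdots a^r_{n_r}\mathbbm 1,z)$ by the corresponding iterated products of the $(a^{i})^{\sigma}(z)$ and checking independence of the chosen monomial expression — which follows from twisted locality — produces a well-defined $Y_{W}$ with $Y_{W}(a,z)=a^{\sigma}(z)$ for $a\in\mathcal{C}$.

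The main obstacle is this last step from generating fields to a globally defined $Y_{W}$ obeying the full twisted Jacobi identity, i.e.\ establishing twisted weak associativity so that the iterate fields $Y_{W}(u_{n}v,z)$ coincide with the component products of $Y_{W}(u,z)$ and $Y_{W}(v,z)$ for arbitrary $u,v\in V_{\widehat{\mathcal{C}}}$ rather than just generators; this is precisely where the machinery of \cite{Li2,KW} carries the weight, and the present argument's role is to verify its hypotheses (twisted locality of the generators and the spanning property underlying \eqref{id-c}) and to confirm that the resulting module restricts to the given $\widehat{\mathcal{C}}[\sigma]$-action. Finally, since both assignments are pinned down by the single datum $a\leftrightarrow a^{\sigma}(z)$ on $\mathcal{C}$, they are mutually inverse, which establishes the claimed natural correspondence.
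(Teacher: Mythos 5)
Your proposal is correct and is essentially the argument the paper itself relies on: the paper gives no proof of Theorem \ref{re-m}, citing \cite{KW,Li2}, and your two directions (extracting the bracket \eqref{t-lb} by taking $\Res_{z_0}$ of the twisted Jacobi identity, and conversely building $Y_W$ from the twisted-local generating fields $a^{\sigma}(z)$ via Li's theory of local systems of twisted vertex operators) are precisely the standard proofs in those references. The only points worth tightening are routine: well-definedness of the $\widehat{\mathcal{C}}[\sigma]$-action requires $Y_W(\mathcal{D}v,z)=\frac{d}{dz}Y_W(v,z)$, and the monodromy property you use for $\sigma$-eigenvectors must be derived from the paper's form of the twisted Jacobi identity (e.g.\ by taking $v=\mathbbm{1}$), both of which are standard.
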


Now we state a graded version of Theorem \ref{re-m}.
 A {\em $\Z$-graded Lie conformal  algebra} is a Lie  conformal algebra $\CC$ equipped with a
 $\Z$-grading
$\CC=\oplus_{n\in\Z}\CC_{(n)}$
such that
$$a_{i}\,\CC_{(n)}\subseteq \CC_{(m+n-i-1)}\quad\text{ for }a\in\CC_{(m)},\ i\in\N.$$
We have the following straightforward result from \eqref{lb}.

\begin{lem}\label{c-graded}
Let $\CC$  be a  Lie conformal algebra equipped with a $\Z$-grading
 $\CC=\oplus_{n\in\Z}\CC_{(n)}$ (of vector spaces).
Then
$\CC$ is a $\Z$-graded  Lie conformal algebra  if and only if
$$
\widehat{\CC}=\oplus_{n\in\Z}\,\widehat{\CC}_{(n)}
$$ is a $\Z$-graded Lie algebra,
where $\widehat{\CC}_{(n)}=\mathrm{Span}\{a(m-n-1)\mid a\in \CC_{(m)},m\in \Z\}$.
Furthermore, in this case, an automorphism $\sigma$ of $\CC$ is graded if and only if the automorphism $\wh\sigma$ of
$\wh{\CC}$ is graded.
\end{lem}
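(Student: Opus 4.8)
The plan is to transport the vector-space grading of $\CC$ to the affinization through the tensor construction, and then to read off both the direct-sum decomposition and the bracket behaviour of $\widehat\CC$ from the single bracket formula \eqref{lb}. Concretely, I would grade $\CC\ot\C[t,t^{-1}]$ by declaring $a\ot t^{k}$ to have degree $m-k-1$ for $a\in\CC_{(m)}$; the subspaces $\widehat\CC_{(n)}$ of the statement are exactly the images of the homogeneous components of this grading under the quotient map. Throughout I would use the bookkeeping $a(k)\in\widehat\CC_{(m-k-1)}$ for $a\in\CC_{(m)}$, which is immediate from the definition of $\widehat\CC_{(n)}$, together with the identification \eqref{id-c} and the decomposition $\widehat\CC=\widehat\CC_{-}\op\widehat\CC_{+}$.

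For the forward implication (assume $\CC$ is a $\Z$-graded Lie conformal algebra) there are two things to check. First, that $\widehat\CC=\op_{n}\widehat\CC_{(n)}$ really is a vector-space grading: this amounts to showing that the relation subspace $\te{Image}(\partial\ot1+1\ot\frac{d}{dt})$ is homogeneous, which in turn reduces to the claim that $\partial$ raises degree by one, i.e.\ $\partial\CC_{(m)}\subseteq\CC_{(m+1)}$. I would extract this from \eqref{E2}: the identity $(\partial a)_{k}=-k\,a_{k-1}$ forces $Y_{-}(\partial a,z)$ to act with the degrees appropriate to $\CC_{(m+1)}$, pinning down the degree of $\partial a$. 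Granting this, $P:=\partial\ot1+1\ot\frac{d}{dt}$ is homogeneous of degree $+1$ on $\CC\ot\C[t,t^{-1}]$, so its image is graded and the grading descends to the quotient. Second, bracket compatibility is the easy half: in \eqref{lb} every summand $(a_{i}b)(m+n-i)$ with $a\in\CC_{(p)},\,b\in\CC_{(q)}$ lands, by the defining axiom $a_{i}b\in\CC_{(p+q-i-1)}$, in $\widehat\CC_{(p+q-m-n-2)}$, and $p+q-m-n-2=\deg a(m)+\deg b(n)$ independently of $i$; hence $[\widehat\CC_{(s)},\widehat\CC_{(t)}]\subseteq\widehat\CC_{(s+t)}$.

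For the reverse implication (assume $\widehat\CC=\op_{n}\widehat\CC_{(n)}$ is a graded Lie algebra) I would recover the conformal grading axiom by an $\widehat\CC_{-}$-projection. Since every $c(k)$ with $c$ homogeneous is itself homogeneous, both $\widehat\CC_{-}$ and $\widehat\CC_{+}$ are graded subspaces, the projection $\widehat\CC\to\widehat\CC_{-}$ along $\widehat\CC_{+}$ preserves degree, and \eqref{id-c} is a degree-preserving isomorphism $\CC\cong\widehat\CC_{-}$. For $a\in\CC_{(p)},\,b\in\CC_{(q)}$ and any $i\in\N$, formula \eqref{lb} gives $[a(i),b(-1)]=\sum_{j=0}^{i}\binom{i}{j}(a_{j}b)(i-1-j)$, in which every term with $j<i$ has nonnegative mode and so lies in $\widehat\CC_{+}$; projecting onto $\widehat\CC_{-}$ leaves exactly $(a_{i}b)(-1)$. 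As $a(i)\in\widehat\CC_{(p-i-1)}$ and $b(-1)\in\widehat\CC_{(q)}$, the bracket is homogeneous of degree $p+q-i-1$, hence so is its $\widehat\CC_{-}$-component $(a_{i}b)(-1)$; applying \eqref{id-c} yields $a_{i}b\in\CC_{(p+q-i-1)}$, which is precisely the axiom.

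Finally, the automorphism statement follows from the same bookkeeping: by Lemma \ref{au-cla}, $\wh\sigma(a(k))=\sigma(a)(k)$. If $\sigma$ is graded then $\sigma(a)\in\CC_{(m)}$ for $a\in\CC_{(m)}$, so $\wh\sigma$ preserves each $\widehat\CC_{(n)}$; conversely, if $\wh\sigma$ is graded then $\wh\sigma(a(-1))=\sigma(a)(-1)\in\widehat\CC_{(m)}\cap\widehat\CC_{-}$ for $a\in\CC_{(m)}$, and the degree-preserving isomorphism \eqref{id-c} forces $\sigma(a)\in\CC_{(m)}$. I expect the only real obstacle to be the well-definedness of the decomposition $\widehat\CC=\op_{n}\widehat\CC_{(n)}$ in the forward direction, i.e.\ the homogeneity of $\partial$; once that is secured, the bracket and automorphism assertions are pure degree counting.
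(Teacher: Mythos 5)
The paper offers no proof of this lemma (it is recorded as ``straightforward from \eqref{lb}''), so the only question is whether your argument stands on its own. Your reverse implication and the ``furthermore'' clause are essentially correct, and these are in fact the only directions the paper ever uses. The genuine gap is in your forward implication, at exactly the step you flagged as ``the only real obstacle'': the homogeneity $\partial\,\CC_{(m)}\subseteq\CC_{(m+1)}$ does \emph{not} follow from \eqref{E2}. The identity $(\partial a)_{k}=-k\,a_{k-1}$ shows only that the \emph{operators} attached to $\partial a$ act with the degrees appropriate to an element of $\CC_{(m+1)}$; it cannot pin down the degree of $\partial a$ as an \emph{element} of $\CC$, because the grading axiom of a $\Z$-graded Lie conformal algebra constrains nothing except these operators. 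Any homogeneous component of $\partial a$ lying in the kernel of $Y_{-}$ is invisible to your argument, and such kernels are not exotic: in the paper's own algebra $\CC_{\g}$ the nonzero elements $\mk_{2}$ and $t_{2}^{s}\mk_{1}$ satisfy $Y_{-}(\cdot,z)=0$, so even there the map $c\mapsto Y_{-}(c,z)$ is not injective and your inference is unavailable.

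In fact no argument can close this gap, because with the paper's literal definition the forward implication is false. Take $\CC=\C a\op\C b$ with $Y_{-}\equiv 0$, $\partial a=b$, $\partial b=0$, graded by $\CC_{(0)}=\C a$ and $\CC_{(5)}=\C b$: every axiom of a $\Z$-graded Lie conformal algebra holds vacuously, yet in $\wh\CC$ the relations $P(b\ot t^{k})=kb\ot t^{k-1}$ and $P(a\ot t^{-1})=b\ot t^{-1}-a\ot t^{-2}$ give $b(j)=0$ for $j\neq -1$ and $b(-1)=a(-2)\neq 0$ (nonzero by the injectivity of \eqref{id-c}); this single element lies in both $\wh\CC_{(5)}$ and $\wh\CC_{(1)}$, so the sum $\sum_{n}\wh\CC_{(n)}$ is not direct. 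The forward direction therefore needs $\partial\,\CC_{(n)}\subseteq\CC_{(n+1)}$ as an extra hypothesis --- surely the intended reading of the definition, and true for $\CC_{\g}$ by construction, since $\partial(D^{n}\ot a)=D^{n+1}\ot a$. Once you assume it (or verify it directly in each application) rather than ``derive'' it from \eqref{E2}, the rest of your forward argument is fine: $\partial\ot 1+1\ot\frac{d}{dt}$ is homogeneous of degree $+1$, its image is a graded subspace, and bracket compatibility is the degree count you gave. Note the instructive asymmetry: in the reverse direction homogeneity of $\partial$ comes for free from $(\partial a)(-1)=a(-2)$ together with directness and the injectivity of \eqref{id-c}, which is exactly why that half (the half the paper uses) is sound while the forward half, as stated, is not.
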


Let  $\sigma$ be an order $T$ graded automorphism of  a $\Z$-graded Lie conformal  algebra $\CC=\oplus_{n\in\mathbb{Z}}{\CC}_{(n)}$, i.e.  $\sigma$ is an automorphism such that $\sigma({\CC}_{(n)})={\CC}_{(n)}$ for $n\in \mathbb{Z}$.
It is obvious that
$\widehat{\mathcal{C}}[\sigma]$ is a $\frac{1}{T}\Z$-graded Lie algebra with
\begin{align}\label{zgradingtw}
\deg a\left(\frac{r}{T}\right)=n-1-\frac{r}{T} \quad\te{for } a\in \CC_{(n)}\cap \CC^{(r)},\ n,r\in \Z.
\end{align}

Recall that a {\em $\Z$-graded vertex algebra} is a vertex algebra  $V$ equipped with a $\Z$-grading $V=\op_{n\in\Z}V_{(n)}$ such that
$$u_{k}\, V_{(n)}\subseteq V_{(m+n-k-1)}\quad \text{ for }u\in V_{(m)},\ m,n,k\in\Z.$$
Then
$V_{\widehat{\CC}}$ is naturally a $\Z$-graded vertex algebra and $\sigma$ is a graded automorphism of it, where the grading on $V_{\widehat{\CC}}$ is
given by
 \begin{equation}
 \begin{split}\label{VCZgrading}
&\deg(a^{1}(-n_{1})\cdots a^{k}(-n_{k})\mathbbm{1})=(m_{1}+n_{1}-1)+\cdots+(m_{k}+n_{k}-1),
 \end{split}
 \end{equation}
for $a^{i}\in\CC_{(m_{i})},\, m_{i}\in\Z$ and  $n_{i},\,k\in\N$.
We say that a $\sigma$-twisted $V_{\widehat{\CC}}$-module $W$ is {\em $\frac{1}{T}\Z$-graded} if
 there is a $\Z$-grading
$W=\oplus_{n\in\Z}W_{(\frac{n}{T})}$
such that
 $$u_{\frac{r}{T}}W_{(\frac{n}{T})}\subseteq W_{(\frac{n}{T}+m-\frac{r}{T}-1)}\quad
\text{for}\  u\in V_{(m)},\ n,r,m\in\Z.$$

Then we have the following graded version of Theorem \ref{re-m}, whose proof is straightforward.

\begin{prpt}\label{prop:gversion}
Let $\mathcal{C}$ be a $\Z$-graded Lie conformal algebra, and let $\sigma$ be a graded automorphism
of $\CC$ with finite order $T$.
Then $\frac{1}{T}\Z$-graded  restricted $\widehat{\mathcal{C}}[\sigma]$-modules
are exactly $\frac{1}{T}\Z$-graded $\sigma$-twisted $V_{\widehat{\mathcal{C}}}$-modules.
\end{prpt}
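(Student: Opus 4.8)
The plan is to keep the ungraded bijection of Theorem~\ref{re-m} intact and simply match the two grading conditions, first on the generating subspace $\CC$ and then propagate to all of $V_{\wh{\CC}}$ by means of a single degree operator. Suppose first that $W=\op_{n\in\Z}W_{(n/T)}$ is a $\frac{1}{T}\Z$-graded restricted $\wh{\CC}[\sigma]$-module; by Theorem~\ref{re-m} it is already a $\sigma$-twisted $V_{\wh{\CC}}$-module with $Y_{W}(a,z)=a^{\sigma}(z)$ for $a\in\CC$. Define the semisimple operator $d_{W}$ on $W$ by $d_{W}|_{W_{(n/T)}}=n/T$, so that $W=\op_{n\in\Z}W_{(n/T)}$ is its eigenspace decomposition. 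Then $W$ being a $\frac{1}{T}\Z$-graded $\sigma$-twisted module is equivalent to the single commutation relation $[d_{W},Y_{W}(u,z)]=(z\frac{d}{dz}+m)Y_{W}(u,z)$ holding for all homogeneous $u\in(V_{\wh{\CC}})_{(m)}$, since this relation is exactly the generating-function form of $u_{r/T}W_{(n/T)}\subseteq W_{(n/T+m-r/T-1)}$.

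On the generators the relation is immediate. For $a\in\CC_{(m)}$ we have $a=a(-1)\mathbbm{1}\in(V_{\wh{\CC}})_{(m)}$ by \eqref{VCZgrading}, while the mode of $Y_{W}(a,z)=a^{\sigma}(z)$ in front of $z^{-r/T-1}$ is $a^{(r)}(r/T)$ with $a^{(r)}\in\CC_{(m)}\cap\CC^{(r)}$ (here we use that $\sigma$ is graded, so $\sigma^{k}$ preserves $\CC_{(m)}$). By \eqref{zgradingtw} this mode has degree $m-1-r/T$ in $\wh{\CC}[\sigma]$, so the hypothesis that $W$ is a graded $\wh{\CC}[\sigma]$-module gives $a^{(r)}(r/T)\,W_{(n/T)}\subseteq W_{(n/T+m-1-r/T)}$, i.e.\ $[d_{W},Y_{W}(a,z)]=(z\frac{d}{dz}+m)Y_{W}(a,z)$. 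This already settles the reverse direction: a $\frac{1}{T}\Z$-graded $\sigma$-twisted module is restricted over $\wh{\CC}[\sigma]$ by Theorem~\ref{re-m}, and restricting its grading relation to $u=a\in\CC_{(m)}$ is precisely compatibility of the $W$-grading with the Lie-algebra grading \eqref{zgradingtw}, so $W$ is a $\frac{1}{T}\Z$-graded $\wh{\CC}[\sigma]$-module.

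For the forward direction it remains to propagate the commutation relation from $\CC$ to all of $V_{\wh{\CC}}$. Let $P$ be the (linear) set of homogeneous $u$ for which $[d_{W},Y_{W}(u,z)]=(z\frac{d}{dz}+\deg u)Y_{W}(u,z)$ holds. Then $\mathbbm{1}\in P$ and $\CC\subseteq P$, and since $V_{\wh{\CC}}$ is spanned by the vectors $a^{1}(-n_{1})\cdots a^{k}(-n_{k})\mathbbm{1}$, it suffices to show $P$ is closed under the maps $u\mapsto a(-j)u$ for $a\in\CC$ and $j\in\Z_{+}$ and then induct on $k$. Here the grading \eqref{VCZgrading} does the bookkeeping: $\deg(a(-j)u)=\deg a+j-1+\deg u$ is precisely the weight shift produced when $Y_{W}(a(-j)u,z)$ is rewritten, via the iterate formula for $\sigma$-twisted modules, as a normal-ordered expression in $Y_{W}(a,z)$ and $Y_{W}(u,z)$; the $d_{W}$-commutator of such an expression is additive in the weights, so the relation for $a$ and for $u$ forces it for $a(-j)u$. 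The main obstacle is exactly this closedness step --- carrying out the twisted iterate computation and checking that the weights add as in \eqref{VCZgrading}; once $P=V_{\wh{\CC}}$ is established, $W$ is a $\frac{1}{T}\Z$-graded $\sigma$-twisted $V_{\wh{\CC}}$-module, completing the correspondence.
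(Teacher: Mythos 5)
Your proof is correct and is exactly the "straightforward" argument the paper has in mind: the paper offers no details at all for Proposition \ref{prop:gversion}, and the natural route is precisely yours — use Theorem \ref{re-m} to keep the module identification, match the grading conditions on the generating subspace $\CC$ by comparing \eqref{zgradingtw} with \eqref{VCZgrading}, and then propagate to all of $V_{\widehat{\CC}}$ by induction on the spanning vectors $a^{1}(-n_{1})\cdots a^{k}(-n_{k})\mathbbm{1}$. The one step you flag as the "main obstacle" — closure of $P$ under $u\mapsto a(-j)u$ via the twisted iterate formula — is indeed routine and works as you describe, since the kernels in the twisted Jacobi identity \eqref{twisted-jacobi} are homogeneous of total degree $-1$ in $(z_{0},z_{1},z_{2})$, so conjugation by $d_{W}$ acts additively on the weights of each term, giving the relation for $a(-j)u$ with the degree shift $\deg a+j-1+\deg u$ prescribed by \eqref{VCZgrading}.
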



\section{Associating $\widehat{\mathcal{G}}(\mathcal{J})$ with vertex algebra and twisted modules}
In this section,
 we associate the restricted modules for $\widehat{\mathcal{G}}(\mathcal{J})$  with the twisted modules for the
  vertex algebra arising from a toroidal Lie algebra.

First we recall the $2$-toroidal Lie algebra of type $C_2$. Write $\mathfrak{sp}_4$ for the symplectic Lie algebra of rank $2$, i.e.,
the simple Lie algebra of type $C_2$.
We fix a standard basis of $\mathfrak{sp}_4$ as follows:
\[\{E_{1,1}-E_{3,3},E_{2,2}-E_{4,4},E_{1,3},E_{3,1},E_{2,4},E_{4,2},E_{1,2}-E_{4,3},E_{2,1}-E_{3,4},E_{1,4}+E_{2,3},E_{4,1}+E_{3,2}\}.
\]
Here, $E_{i,j}$ is  the elementary $4\times 4$-matrix having $1$ in $(i,j)$-entry and $0$ elsewhere.

 Let $\mathcal{K}$ be the vector space spanned by the symbols
$$t_{1}^{m_{1}}t_{2}^{m_{2}}\mk_{1},\quad t_{1}^{m_{1}}t_{2}^{m_{2}}\mk_{2} \quad (m_{1},m_2\in\Z),$$ and subject to the relations
 \begin{equation}\label{rela:1}
m_{1}t_{1}^{m_{1}}t_{2}^{m_{2}}\mathbf{k}_{1}+m_{2}t_{1}^{m_{1}}t_{2}^{m_{2}}\mathbf{k}_{2}=0 \quad  \text{for}\ m_1,m_2,n_1,n_2\in \Z.
\end{equation}

 Let
 \[\mathfrak{t}=\left(\mathfrak {sp}_{4}\otimes\mathbb{C}[t_{1}^{\pm1},t_{2}^{\pm1}]\right)\oplus\mathcal{K}\]
  be the $2$-toroidal Lie algebra associated to $\mathfrak {sp}_{4}$, where $\mathcal{K}$ is central and
\begin{eqnarray*}
[x\ot t_{1}^{m_{1}}t_{2}^{m_{2}},y\ot t_{1}^{n_{1}}t_{2}^{n_{2}}]=[x,y]\ot t_{1}^{m_{1}+n_{1}}t_{2}^{m_{2}+n_{2}}+\mathrm{tr}(xy)\sum_{i=1,\,2}m_{i}t_{1}^{m_{1}+n_{1}}t_{2}^{m_{2}+n_{2}}\mathbf{k}_{i}
\end{eqnarray*}
for $x,y\in \mathfrak {sp}_{4}$ and $m_{1},m_{2},n_{1},n_{2}\in\mathbb{Z}$.
It is known  that $\mathfrak{t}$ is the universal central extension of the $2$-loop algebra $\mathfrak {sp}_{4}\otimes\mathbb{C}[t_{1}^{\pm1},t_{2}^{\pm1}]$ (\cite{MRY}).
We fix a  $\Z$-grading on
 $\mathfrak{t}$  with
\begin{align}\label{Zgradingont}
\deg(x\ot t_1^{m_1}t_2^{m_2})= \deg(t_1^{m_1}t_2^{m_2}\mathbf{k}_i)=-m_1
\end{align}
for $x\in \mathfrak {sp}_{4},\, m_{i}\in\mathbb{Z}$ and $i=1,2$.

Form the vector space
\begin{align*}
\mathcal{C}_{\g}=(\C[D]\otimes\g)\oplus\C\mathbf{k}_{1},
\end{align*}
where $D$ is an indeterminate and
\[\g=(\mathfrak {sp}_{4}\otimes\C[t_{2},t_{2}^{-1}])\oplus\C \mathbf{k}_{2}\oplus\left(\sum_{s\in\Z^{\times}}\C t_{2}^{s}\mathbf{k}_{1}\right)\subseteq\mathfrak{t}.\]
Define a linear operator $\partial$ on $\mathcal{C}_{\g}$  by
  $$\partial(D^{n}\otimes a)=D^{n+1}\otimes a\quad\text{and}\quad \partial(\mathbf{k}_{1})=0 \quad \text{for}\ a\in\g,\ n\in\N, $$
and define
\begin{equation*}
Y_{-}(\cdot,z):\CC_\g  \rightarrow\Hom(\CC_\g,z^{-1}\mathcal{C}_{\g}[z^{-1}]),\quad
a\mapsto \sum_{n\ge 0}a_{n}z^{-n-1}
\end{equation*}
to be the unique linear map such that the following conditions hold:
\begin{itemize}
\item $[\partial,Y_-(a,z)]=\frac{d}{dz}Y_{-}(a,z)=Y_{-}(\partial a,z)$,
\item $Y_{-}(\mathbf{k}_1,z)=Y_{-}(\mathbf{k}_{2},z)= Y_{-}(t_{2}^{s}\mathbf{k}_{1},z)=0$, and
\item
\begin{equation*}
\begin{split}
&\qquad(x\ot t_{2}^{m})_{n}(y\ot t_{2}^{m'})\\
&=\left\{\begin{array}{lll}
    \ [x,y]\ot t_{2}^{m+m'}+\frac{m}{m+m'}\text{tr}(xy)(D\otimes t_{2}^{m+m'}\mathbf{k}_{1}), & n=0,\  m+m'\neq0,\\
    \ [x,y]\ot 1+m\text{tr}(xy)\mathbf{k}_{2}, &  n=0,\ m+m'=0, \\
  \ \text{tr}(xy) t_{2}^{m+m'}\mathbf{k}_{1}, & n=1,\\
    \ 0, & n>1,
  \end{array}
    \right.
\end{split}\end{equation*}
\end{itemize}
where $a\in \g,\ x,y\in \fsp_{4},\ s\in\Z^{\times},\ m,m'\in\Z$ and $n\in\N$.

The following result was proved in \cite{CJKT}.
\begin{lem}\label{lem:cg}
The vector space $\CC_{\g}$ together with the operator $\partial$ and the map $Y_-$ forms a Lie conformal algebra. Moreover, the linear map
$i_{\g}: \widehat{\mathcal{C}_{\g}}\rightarrow \mathfrak{t}$ defined by
\begin{equation*}
\begin{split}
&(x\ot t_{2}^{n})(m)\mapsto x\ot t_{1}^{m}t_{2}^{n},\ \ \mathbf{k}_{2}(m)\mapsto t_{1}^{m} \mathbf{k}_{2},\ \
(t_{2}^{s}\mathbf{k}_{1})(m)\mapsto t_{1}^{m+1}t_{2}^{s}\mathbf{k}_{1},\  \mk_{1}(-1)\mapsto \mk_{1}
\end{split}\end{equation*}
for $x\in\fsp_{4},\ n,m\in\Z$ and $s\in\Z^{\times}$, is a Lie isomorphism.
\end{lem}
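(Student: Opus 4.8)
The plan is to prove both assertions at once, by transporting the Lie bracket of $\mathfrak{t}$ through $i_{\g}$ and then invoking Lemma \ref{cla-la}. First I would check that $Y_{-}$ is well defined: the first bullet is precisely \eqref{E2}, which determines the modes of $\partial a$ from those of $a$ via $(\partial a)_{n}=-n\,a_{n-1}$, so the values of $Y_{-}$ on the generators $x\ot t_{2}^{m}\in\g$ supplied by the third bullet determine $Y_{-}$ on all of $\C[D]\ot\g$; a routine check shows the prescribed products are consistent with this recursion and land back in $\CC_\g$, giving existence and uniqueness. With \eqref{E2} in hand, Lemma \ref{cla-la} reduces the claim that $(\CC_\g,\partial,Y_{-})$ is a Lie conformal algebra to the claim that $\wh{\CC_\g}$ is a Lie algebra under the bracket \eqref{lb}. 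The key observation is that this and the isomorphism statement follow together: if $i_{\g}$ is a linear bijection intertwining \eqref{lb} with the bracket of $\mathfrak{t}$, then, since $\mathfrak{t}$ is a Lie algebra, the pulled-back bracket \eqref{lb} is automatically a Lie bracket, so $\wh{\CC_\g}$ is a Lie algebra (hence $\CC_\g$ is a Lie conformal algebra by Lemma \ref{cla-la}) and $i_{\g}$ is by construction a Lie isomorphism.

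To see that $i_{\g}$ is a linear bijection I would first fix a basis of $\wh{\CC_\g}$. Since $\C[D]\ot\g$ is free as a $\C[\partial]$-module on any basis of $\g$, while $\partial\mk_{1}=0$ forces $\mk_{1}(m)=0$ for all $m\neq-1$, the set $\{c(m)\mid c\text{ in a basis of }\g,\ m\in\Z\}\cup\{\mk_{1}(-1)\}$ is a basis of $\wh{\CC_\g}$. Both $\wh{\CC_\g}$ and $\mathfrak{t}$ carry a $\Z^{2}$-grading by $t_{1}$- and $t_{2}$-degree, and $i_{\g}$ is homogeneous for it, so bijectivity can be checked on each finite-dimensional graded piece. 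Here one uses the defining relation \eqref{rela:1} of $\CK$: for instance $t_{1}^{m}t_{2}^{0}\mk_{1}=0$ when $m\neq0$, while for $s\neq0$ the element $t_{1}^{m}t_{2}^{s}\mk_{2}$ is a scalar multiple of $t_{1}^{m}t_{2}^{s}\mk_{1}$. Matching these against the images of the basis vectors shows $i_{\g}$ carries our basis bijectively onto a basis of $\mathfrak{t}$.

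The heart of the argument, and the step I expect to be the main obstacle, is checking that $i_{\g}$ intertwines the two brackets on basis vectors. For two non-central generators $a=x\ot t_{2}^{p}$ and $b=y\ot t_{2}^{q}$ one expands
\[[a(r),b(s)]=(a_{0}b)(r+s)+r\,(a_{1}b)(r+s-1),\]
substitutes the product formulas, and crucially rewrites $(D\ot t_{2}^{p+q}\mk_{1})(r+s)$ using $(\partial c)(n)=-n\,c(n-1)$ before applying $i_{\g}$. When $p+q\neq0$ the central part collapses to $\tfrac{rq-ps}{p+q}\,\mathrm{tr}(xy)\,t_{1}^{r+s}t_{2}^{p+q}\mk_{1}$, which is exactly the image under \eqref{rela:1} of $r\,t_{1}^{r+s}t_{2}^{p+q}\mk_{1}+p\,t_{1}^{r+s}t_{2}^{p+q}\mk_{2}$, i.e.\ the central term of $[x\ot t_{1}^{r}t_{2}^{p},\,y\ot t_{1}^{s}t_{2}^{q}]$ in $\mathfrak{t}$. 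The case $p+q=0$ must be handled separately: there $a_{1}b=\mathrm{tr}(xy)\mk_{1}$, whose mode $\mk_{1}(r+s-1)$ survives precisely when $r+s=0$, matching the fact that \eqref{rela:1} is vacuous at $(0,0)$ and kills the $\mk_{1}$-term otherwise.

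Finally, when at least one argument is a central vector $\mk_{1}$, $\mk_{2}$ or $t_{2}^{s}\mk_{1}$, both sides vanish: on the $\wh{\CC_\g}$ side because these vectors are annihilated by every product, so all terms of \eqref{lb} are zero, and on the $\mathfrak{t}$ side because $\CK$ is central. The only place any real care is needed is the bookkeeping between the quotient relation defining $\wh{\CC_\g}$ and the toroidal relation \eqref{rela:1}; once these are reconciled, Lemma \ref{cla-la} delivers both conclusions simultaneously.
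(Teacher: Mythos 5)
Your proposal is correct, but it cannot be compared against an internal argument: the paper offers no proof of Lemma \ref{lem:cg} at all, citing it instead from \cite{CJKT}. Your transport-of-structure strategy is exactly the right way to make the lemma self-contained: verify \eqref{E2}, exhibit $i_{\g}$ as a linear bijection (the basis bookkeeping with $\mk_{1}(m)=0$ for $m\neq -1$ and the reduction of $\mathcal{K}$ by \eqref{rela:1} are both correct), check the intertwining on basis modes, and then let the Lie algebra structure of $\mathfrak{t}$ force antisymmetry and Jacobi for \eqref{lb}, so that Lemma \ref{cla-la} delivers the Lie conformal algebra axioms and the isomorphism simultaneously. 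Your central computation is right: for $p+q\neq 0$ the coefficient $\frac{rq-ps}{p+q}$ of $t_{1}^{r+s}t_{2}^{p+q}\mk_{1}$ matches the image of $r\,\mk_{1}$- and $p\,\mk_{2}$-terms under \eqref{rela:1}, and for $p+q=0$ the survival of $\mk_{1}(r+s-1)$ only at $r+s=0$ matches the vanishing of $t_{1}^{r+s}\mk_{1}$ for $r+s\neq0$. One point you should make explicit rather than assert in passing: the paper's bullet-point definition of $Y_{-}$ prescribes products $a_{n}b$ only when both $a,b$ lie in $\fsp_{4}\ot\C[t_{2},t_{2}^{-1}]$ (plus vanishing when the \emph{left} entry is $\mk_{1}$, $\mk_{2}$ or $t_{2}^{s}\mk_{1}$); products with a central element in the \emph{right} slot are not literally covered, and your step ``these vectors are annihilated by every product'' silently declares them zero. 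That declaration is the intended reading (it is forced if the paper's claim of a \emph{unique} such $Y_{-}$ is to hold, and it is consistent with skew symmetry \eqref{E3} once the axioms are established), but since your argument uses $a_{i}c=0$ for central $c$ before skew symmetry is available, it must be built into the definition of $Y_{-}$, not derived. With that caveat recorded, your proof is complete and is very likely close in spirit to the construction in \cite{CJKT}.
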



Endow a $\Z$-grading (of vector spaces) on $\CC_{\g}$
such that $\mathcal{C}_{\g (n)}=0$ if $n<0$,
 $\mathcal{C}_{\g (0)}=\te{Span}\{t_{2}^{n}\mk_{1}\mid n\in\Z\}$ and
\begin{equation*}
\mathcal{C}_{\g (n)}=\te{Span}\{D^{n-1}\ot a,\ D^{n}\ot t_{2}^{s}\mk_{1}\mid a\in (\fsp_{4}\ot\C[t_{2},t_{2}^{-1}])\op\C\mk_{2},\ s\in\Z^{\times}\}
  \end{equation*}
 if $n>0$.
  This induces a $\Z$-grading (of vector spaces) on $\widehat{\CC_{\g}}$ such that
  \begin{align}\label{hatcg-graded}
\widehat{\CC_{\g}}=\oplus_{n\in\Z}\ \widehat{\CC}_{\g(n)},\end{align}
   where $\widehat{\CC_{\g}}_{(n)}=\te{Span}\{a(m-n-1)\mid m\in\N, a\in\CC_{\g(m)}\}$.

Via the isomorphism $i_\g$ (see Lemma \ref{lem:cg}), we see that
 the $\Z$-grading on $\wh{\CC_{\g}}$ coincides with that of $\mathfrak{t}$ defined in \eqref{Zgradingont}.
 Thus $\widehat{\CC_{\g}}$ is a $\Z$-graded Lie algebra (with the grading \eqref{hatcg-graded}).
  This together with Lemma \ref{c-graded} implies that
  \begin{eqnarray}\label{cg-graded}
\mathcal{C}_{\g}=\oplus_{n\in\Z}\ \mathcal{C}_{\g (n)}
\end{eqnarray} is a $\Z$-graded Lie conformal  algebra.

In what follows we define a graded involution of the Lie conformal algebra $\CC_{\g}$.
Firstly, let  $\sigma:\g\rightarrow\mathcal{C}_{\g}$ be a linear map  defined by
\begin{equation*}\begin{split}
&\sigma((E_{1,1}-E_{3,3})\ot t_{2}^{n})=\left\{\begin{array}{ll}           (E_{4,4}-E_{2,2})\ot 1+\mk_2,\ &n=0,\\       (E_{4,4}-E_{2,2})\ot t_2^n+\frac{1}{n}D\ot t^n_2\mk_1,\ &n\neq0, \\          \end{array}
            \right.\\
&\sigma((E_{2,2}-E_{4,4})\ot t_{2}^{n})=\left\{\begin{array}{ll}
                        (E_{3,3}-E_{1,1})\ot 1+\mk_2,\ &n=0,\\
                         (E_{3,3}-E_{1,1})\ot t_2^n+\frac{1}{n}D\ot t^n_2\mk_1,\ &n\neq0,\\
             \end{array}
           \right.\\    &\sigma((E_{1,2}-E_{4,3})\ot t^n_2)=(E_{1,2}-E_{4,3})\ot t^n_2 ,\quad \sigma((E_{1,4}+E_{2,3})\ot t^n_2)=-(E_{4,1}+E_{3,2})\ot t^{n+1}_2,\\
    &\sigma((E_{2,1}-E_{3,4})\ot t^n_2)=(E_{2,1}-E_{3,4})\ot t^n_2 ,\quad \sigma((E_{4,1}+E_{3,2})\ot t^n_2)=-(E_{1,4}+E_{2,3})\ot t^{n-1}_2, \\
&\sigma(E_{1,3}\ot t^n_2)=E_{4,2}\ot t^{n+1}_2,\quad \sigma(E_{3,1}\ot t^n_2)=E_{2,4}\ot t^{n-1}_2,\quad \sigma(E_{2,4}\ot t^n_2)=E_{3,1}\ot t^{n+1}_2,\\
&\sigma(E_{4,2}\ot t^n_2)=E_{1,3}\ot t^{n-1}_2,\quad \sigma(\mathbf{k}_{2})=\mathbf{k}_{2} ,\quad \sigma(t_{2}^{s}\mathbf{k}_{1})=t_{2}^{s}\mathbf{k}_{1},
\end{split}\end{equation*}
where $n\in\Z,\ s\in\Z^{\times}$.
Next, we extend $\sigma$ to a linear endomorphism on $\mathcal{C}_{\g}$ such that
$$\sigma(D^{n}\ot a)=\partial^{n}(\sigma(a))\quad\te{and}\quad\sigma(\mathbf{k}_{1})=\mathbf{k}_{1} \quad \text{for}\ a\in\g,\ n\in\N.$$

 We have the following result.

\begin{lem}\label{lem:sigmaaut}
The map $\sigma$ is a graded involution of  $\CC_{\g}$.
\end{lem}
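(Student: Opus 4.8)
The plan is to verify the three defining properties of a graded involution in turn: that $\sigma$ commutes with $\partial$, that $\sigma$ is a grading-preserving linear bijection, and that $\sigma$ respects the map $Y_{-}$; that the order is exactly two will come out of $\sigma^2=\mathrm{id}$.

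First I would dispose of the routine points. Commutativity $\sigma\circ\partial=\partial\circ\sigma$ is immediate from the defining formula $\sigma(D^n\ot a)=\partial^n(\sigma(a))$ together with $\sigma(\mathbf{k}_1)=\mathbf{k}_1$ and $\partial\mathbf{k}_1=0$. For the involution property, since $\sigma$ commutes with $\partial$ one has $\sigma^2(D^n\ot a)=\partial^n(\sigma^2(a))$, so it suffices to check $\sigma^2=\mathrm{id}$ on the generators $a\in\g$ and on $\mathbf{k}_1$; this is a finite case check (for instance $\sigma^2(E_{1,3}\ot t_2^n)=\sigma(E_{4,2}\ot t_2^{n+1})=E_{1,3}\ot t_2^n$, while on a Cartan-type generator the $\frac1n D\ot t_2^n\mathbf{k}_1$ correction term, being fixed by $\sigma$, cancels the sign change in the $\fsp_4$-part). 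Hence $\sigma$ is a bijection of order $2$. Gradedness is equally direct: $\partial$ raises the degree of \eqref{cg-graded} by one, so from $\sigma(D^n\ot a)=\partial^n\sigma(a)$ it is enough to see that $\sigma$ sends each generator of $\g$ to an element of the same degree, which one reads off the defining formulas (the $\fsp_4$-generators and $\mathbf{k}_2$ stay in degree $1$, the correction terms $\frac1n D\ot t_2^n\mathbf{k}_1$ also lie in degree $1$, and $t_2^s\mathbf{k}_1$ stays in degree $0$); alternatively this is gradedness of $\widehat\sigma$ via Lemma \ref{c-graded}.

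The substance is to show $\sigma$ respects $Y_{-}$. Rather than verifying $\sigma(Y_{-}(a,z)b)=Y_{-}(\sigma a,z)\sigma b$ directly on the listed $n$-th products, I would invoke Lemma \ref{au-cla}: since $\sigma$ is a linear bijection commuting with $\partial$, it is an automorphism of $\CC_\g$ if and only if $\widehat\sigma\colon a(m)\mapsto\sigma(a)(m)$ is a Lie automorphism of $\widehat{\CC_\g}$. Transporting through the isomorphism $i_\g\colon\widehat{\CC_\g}\xrightarrow{\ \sim\ }\mathfrak{t}$ of Lemma \ref{lem:cg}, this is equivalent to $\widetilde\sigma:=i_\g\circ\widehat\sigma\circ i_\g^{-1}$ being a Lie automorphism of $\mathfrak{t}$. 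A short computation with $i_\g$ and the relation \eqref{rela:1} shows that $\widetilde\sigma$ fixes the centre $\mathcal K$ pointwise and acts on the loop part by
\[
\widetilde\sigma\big(x\ot t_1^{m_1}t_2^{m_2}\big)=\phi(x)\ot t_1^{m_1}t_2^{m_2+d_x}+\big(\text{central correction}\big),
\]
where $\phi$ is the order-two linear map of $\fsp_4$ obtained by stripping the $t_2$-shifts and central terms from the defining formulas (so $E_{1,3}\mapsto E_{4,2}$, $E_{1,1}-E_{3,3}\mapsto E_{4,4}-E_{2,2}$, $E_{1,2}-E_{4,3}$ fixed, etc.) and $d_x\in\{-1,0,1\}$ is the shift, which is the weight of $x$ for the three-term $\Z$-grading of $\fsp_4$ cut out by these shifts.

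To finish I would check that $\widetilde\sigma$ preserves the bracket on generators. The loop part matches provided $\phi$ is a Lie automorphism of $\fsp_4$ (a finite check on the bracket table) and $d$ is additive, i.e. $d_{[x,y]}=d_x+d_y$, which holds because $d$ is the weight of a $\Z$-grading; being a Lie automorphism, $\phi$ preserves the Killing form and hence the proportional trace form, so $\mathrm{tr}(\phi(x)\phi(y))=\mathrm{tr}(xy)$. The crux, and the step I expect to be the main obstacle, is the central bookkeeping: comparing the cocycle $\mathrm{tr}(xy)\sum_i m_i(\cdots)\mathbf{k}_i$ before and after $\widetilde\sigma$, the shifts $d_x,d_y$ change the $\mathbf{k}_2$-coefficient, and this discrepancy must be absorbed exactly by the central corrections $c_{[x,y]}$ coming from $\sigma$ on the Cartan generators. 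Two observations make it work: whenever $\mathrm{tr}(xy)\neq0$ the dual pairing forces $d_x+d_y=0$, so the two $t_2$-exponents agree; and for basis elements the identity $c_{[x,y]}=\mathrm{tr}(xy)\,d_x\,t_1^{m_1+n_1}t_2^{m_2+n_2}\mathbf{k}_2$ holds after reducing via \eqref{rela:1} (e.g. for $[E_{1,3},E_{3,1}]=E_{1,1}-E_{3,3}$ one gets $c_{[x,y]}=t_1^{m_1+n_1}t_2^{m_2+n_2}\mathbf{k}_2$, matching $\mathrm{tr}(E_{1,3}E_{3,1})\,d_{E_{1,3}}\,t_1^{m_1+n_1}t_2^{m_2+n_2}\mathbf{k}_2$). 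Verifying this across the finitely many basis brackets that produce a Cartan element completes the argument, since a bracket-preserving bijection fixing the central ideal $\mathcal K$ pointwise is a Lie automorphism; by Lemma \ref{au-cla} this yields that $\sigma$ is an automorphism of $\CC_\g$, and together with the first two paragraphs that it is a graded involution.
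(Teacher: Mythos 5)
Your proposal is correct and takes essentially the same route as the paper: check $\sigma^2=\mathrm{Id}$ and $\sigma\circ\partial=\partial\circ\sigma$ directly, reduce via Lemma \ref{au-cla} and Lemma \ref{c-graded} to showing that $\widehat\sigma$ is a graded Lie automorphism of $\widehat{\CC_\g}$, and then transport through the isomorphism $i_\g$ of Lemma \ref{lem:cg} to verify that $i_\g\circ\widehat\sigma\circ i_\g^{-1}$ is a graded automorphism of $\mathfrak{t}$. The only difference is one of presentation: the paper simply records the explicit action on $\mathfrak{t}$ and declares the bracket check direct, whereas you organize that same check into a finite-dimensional automorphism $\phi$ of $\fsp_4$, an additive grading shift $d$, and the central cocycle identity $c_{[x,y]}=\mathrm{tr}(xy)\,d_x\,t_1^{m_1+n_1}t_2^{m_2+n_2}\mathbf{k}_2$ — a tidier bookkeeping of the computation the paper leaves to the reader.
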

\begin{proof} It is straightforward to check that $\sigma^2=\mathrm{Id}$ and $\partial\circ\sigma=\sigma\circ\partial$.
By Lemma \ref{au-cla} and Lemma \ref{c-graded}, it remains to prove that the map
$$\widehat{\sigma}: \widehat{\mathcal{C}_{\g}}\longrightarrow  \widehat{\mathcal{C}_{\g}}, \quad \widehat{\sigma}(a(m))=\sigma(a)(m)\quad (a\in\mathcal{C}_{\g},\ m\in\Z)$$
is a graded Lie automorphism of $\widehat{\CC_{\g}}$. From
  Lemma \ref{lem:cg},  we only need to show that
 $i_{\g}\circ\widehat{\sigma}\circ i_{\g}^{-1}$ is a graded automorphism of $\mathfrak{t}$.

This can be checked directly by using the following
explicit action of $i_{\g}\circ\widehat{\sigma}\circ i_{\g}^{-1}$ on $\mathfrak{t}$:
\begin{eqnarray*}
&&\mk_{1}\mapsto \mk_{1},\ \ \ t_{1}^{m}\mathbf{k}_{2}\mapsto t_{1}^{m} \mathbf{k}_{2},\ \ \  t_{1}^{m}t_{2}^{s}\mathbf{k}_{1}\mapsto t_{1}^{m}t_{2}^{s}\mathbf{k}_{1}, \\
&&(E_{1,2}-E_{4,3})\ot t_{1}^{m}t^n_2\mapsto (E_{1,2}-E_{4,3})\ot t_{1}^{m}t^{n}_2,\\
&&(E_{2,1}-E_{3,4})\ot t_{1}^{m}t^n_2\mapsto (E_{2,1}-E_{3,4})\ot t_{1}^{m}t^{n}_2,\\
&&(E_{1,4}+E_{2,3})\ot t_{1}^{m}t^n_2\mapsto-(E_{4,1}+E_{3,2})\ot t_{1}^{m}t^{n+1}_2,\\
 && (E_{4,1}+E_{3,2})\ot t_{1}^{m}t^n_2\mapsto -(E_{1,4}+E_{2,3})\ot t_{1}^{m}t^{n-1}_2,\\
 &&(E_{1,1}-E_{3,3})\ot t_{1}^{m}t^n_2\mapsto (E_{4,4}-E_{2,2})\ot t_{1}^{m}t^{n}_2+t_{1}^{m}t_{2}^{n}\mathbf{k}_{2},\\
       &&(E_{2,2}-E_{4,4})\ot t_{1}^{m}t^n_2\mapsto (E_{3,3}-E_{1,1})\ot t_{1}^{m}t^{n}_2+t_{1}^{m}t_{2}^{n}\mathbf{k}_{2},\\
 && E_{1,3}\ot t_{1}^{m}t^n_2\mapsto E_{4,2}\ot t_{1}^{m}t^{n+1}_2,\quad E_{2,4}\ot t_{1}^{m}t^n_2\mapsto E_{3,1}\ot t_{1}^{m}t^{n+1}_2,\\
  && E_{3,1}\ot t_{1}^{m}t^n_2\mapsto E_{2,4}\ot t_{1}^{m}t^{n-1}_2,\quad E_{4,2}\ot t_{1}^{m}t^n_2\mapsto E_{1,3}\ot t_{1}^{m}t^{n-1}_2,
         \end{eqnarray*}
where $m,n\in \Z,\ s\in \Z^{\times}$.
\end{proof}

In view of Lemma \ref{lem:sigmaaut}, associated to the pair $(\CC_{\g},\sigma)$,
we have a Lie algebra
$$\widehat{\mathcal{C}}_{\g}[\sigma]=\bigoplus_{n\in\Z}\left(\CC_{\g}^{(n)}\otimes t_1^{\frac{n}{2}}\right)/\te{Image}\big(\partial\otimes1+1\otimes \frac{d}{dt}\big),$$
where $\CC_{\g}^{(n)}=\{a\in\CC_{\g}\mid\sigma(a)=(-1)^{n}a\}$ and the Lie brackets  are given by \eqref{t-lb}.
Note that the following elements form a basis of $\widehat{\mathcal{C}}_{\g}[\sigma]$:
\begin{equation*}\begin{split}
&\left((E_{1,3}\ot t_{2}^{n})^{(j)}\right)\lf(m+\frac{j}{2}\rg),\hspace{2cm} \left((E_{3,1}\ot t_{2}^{n})^{(j)}\right)\lf(m+\frac{j}{2}\rg),\\
&\left(((E_{1,2}-E_{4,3})\ot t_{2}^{n})^{(0)}\right)(m),\hspace{1.6cm}\left(((E_{2,1}-E_{3,4})\ot t_{2}^{n})^{(0)}\right)(m),\\
&\left(((E_{1,1}-E_{3,3})\ot t_{2}^{n})^{(j)}\right)\lf(m+\frac{j}{2}\rg),\quad \left(((E_{1,4}+E_{2,3})\ot t_{2}^{n})^{(j)}\right)\lf(m+\frac{j}{2}\rg),\\
&\,\mathbf{k}_{2}(m),\quad (t_{2}^{s}\mathbf{k}_{1})(m),\quad\mathbf{k}_{1}(-1)
\end{split}\end{equation*}
for $j\in\{0,1\},\ n,m\in\Z$ and  $s\in \Z^{\times}$, where $a^{(j)}=\frac{1}{2}(a+(-1)^{j}\sigma(a))\in\CC_{\g}^{(j)}$ for $a\in\CC_{\g}$.

Define a linear map $\varphi: \widehat{\mathcal{G}}(\mathcal{J})\rightarrow \widehat{\mathcal{C}}_{\g}[\sigma]$ as follows:
\begin{eqnarray*}
&&x_{+}(2m+j,2n+k)\mapsto \left\{\begin{array}{lll}
 2\left((E_{1,3}\ot t_{2}^{n})^{(j)}\right)\left(m+\frac{j}{2}\right),&\quad \  k=0 ,\ j\in\{0,1\},\\
    \sqrt{-1}\left(((E_{1,2}-E_{4,3})\ot t_{2}^{n+1})^{(0)}\right)(m),&\quad \ k=1,\ j=0,\\
                         \end{array}
                         \right.\\
                         \\
&&x_{-}(2m+j,2n+k)\mapsto \left\{\begin{array}{lll}
 2\left((E_{3,1}\ot t_{2}^{n})^{(j)}\right)\lf(m+\frac{j}{2}\rg), &\quad \ \ k=0 ,\ j\in\{0,1\}, \\  -\sqrt{-1}\left(((E_{2,1}-E_{3,4})\ot t_{2}^{n})^{(0)}\right)(m),&\quad \ \ k=1,\ j=0,\\
                         \end{array}
                         \right.\\
                         \\
&&\alpha^{\vee}(2m+j,2n+k)\mapsto \left\{\begin{array}{lll}
2\left(((E_{1,1}-E_{3,3})\ot t_{2}^{n})^{(j)}\right)(m+\frac{j}{2}), &k=0 ,\ j\in\{0,1\},\\
                           2\sqrt{-1}\left(((E_{1,4}+E_{2,3})\ot t_{2}^{n})^{(j)}\right)\lf(m+\frac{j}{2}\rg) ,&k=1 ,\ j\in\{0,1\},\\
                         \end{array}
                         \right.\\
                         \\
                         &&C_1(2m,2n)\mapsto \frac{1}{2}t_{2}^{n}\mathbf{k}_1(m-1),\ C_2(2m,0)\mapsto \frac{1}{2}\mathbf{k}_{2}(m),\
     C_2(2m,2s)\mapsto -\frac{m}{2s}t_{2}^{s}\mathbf{k}_1(m-1),
\end{eqnarray*}
where $n,m\in\mathbb{Z}$ and $s\in \mathbb{Z}^{\times}$.

The following result affords a new realization of $\widehat{\mathcal{G}}(\mathcal{J})$ in the term of Lie conformal algebra.

\begin{prpt}\label{prop:iso-Cg-tkk}
The map $\varphi: \widehat{\mathcal{G}}(\mathcal{J}) \rightarrow \widehat{\mathcal{C}}_{\g}[\sigma]$  is a Lie isomorphism.
\end{prpt}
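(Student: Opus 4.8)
The plan is to show that $\varphi$ is a bijective Lie algebra homomorphism. Since $\widehat{\mathcal{G}}(\mathcal{J})$ is generated by the elements $x_{\pm}(\rho),\alpha^{\vee}(\tau),C_i(\upsilon)$ subject to (R1)--(R4), it suffices to verify that $\varphi$ preserves the brackets dictated by (R1)--(R4) and that it is a linear bijection; together these force $\varphi$ to be an isomorphism. For the bijection I would not argue with the redundant spanning set but with honest bases on both sides: the basis of $\widehat{\mathcal{G}}(\mathcal{J})$ read off from the triangular decomposition recalled in Section~2 (the explicit $\widehat{\mathcal{G}}(\mathcal{J})^{\pm}$ and $\CH$, which already resolve the redundancy imposed by (R4)), and the explicit basis of $\widehat{\mathcal{C}}_{\g}[\sigma]$ displayed above. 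One then checks that $\varphi$ carries the former bijectively onto the latter. The one point needing care is the central part: for instance $\varphi(C_1(2m,0))=\tfrac12\mathbf{k}_1(m-1)$ must vanish when $m\neq0$, which it does because $\partial\mathbf{k}_1=0$ forces $\mathbf{k}_1(p)=0$ for all $p\neq-1$, matching the relation $2m\,C_1(2m,0)=0$ coming from (R4); similarly $\varphi(C_2(2m,2s))=-\tfrac m{2s}(t_2^{s}\mathbf{k}_1)(m-1)=-\tfrac ms\,\varphi(C_1(2m,2s))$ reproduces (R4) on the conformal side.

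The computational engine for the homomorphism property is formula \eqref{t-lb} together with the $n$-th products of $\CC_\g$ displayed before Lemma~\ref{lem:cg}. The decisive simplification is that these products vanish for $n\geq 2$ (and all products involving the central elements $\mathbf{k}_1,\mathbf{k}_2,t_2^{s}\mathbf{k}_1$ vanish), so every bracket in $\widehat{\mathcal{C}}_{\g}[\sigma]$ collapses to its $i=0$ and $i=1$ terms,
\[
[a(\tfrac r2),b(\tfrac s2)]=(a_0b)(\tfrac{r+s}2)+\tfrac r2\,(a_1b)(\tfrac{r+s}2-1),\qquad a\in\CC_\g^{(r)},\ b\in\CC_\g^{(s)}.
\]
Transporting through $i_\g$ (Lemma~\ref{lem:cg}) and the explicit form of $i_\g\circ\wh\sigma\circ i_\g^{-1}$ on $\mathfrak{t}$ computed in the proof of Lemma~\ref{lem:sigmaaut}, this reduced bracket is identified with the toroidal bracket of (a cover of) $\mathfrak{t}$: the $a_0b$-term carries the $[x,y]$-part and the $a_1b$-term the $\mathbf{k}_1$-cocycle, the remaining $\mathbf{k}_2$-contributions being governed by the central relation \eqref{rela:1}. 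The normalizing factors $\tfrac12$ in $\varphi(C_1(\cdots))$ and $\varphi(C_2(\cdots))$ are exactly the rescaling of central elements forced by this identification, and it reduces the verification of (R1)--(R4) to elementary $4\times4$-matrix computations in $\fsp_4$ combined with the eigenspace projections $a^{(j)}=\tfrac12\big(a+(-1)^{j}\sigma(a)\big)$.

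With this in hand I would dispatch the relations in increasing order of difficulty. The centrality statements in (R4) and the linear relation among the $C_i$ follow from $\partial\mathbf{k}_1=0$ and \eqref{rela:1}; relation (R2) is a single adjoint computation using the reduced bracket. The genuine work is in (R1) and (R3), each of which splits into the several cases governed by whether $\rho,\tau$ and $\rho+\tau$ lie in $S$ or in $S^{\bot}$, i.e.\ by the parities of the two coordinates. In each case one expands $\varphi(x_\pm(\rho))$ and $\varphi(\alpha^{\vee}(\tau))$ into their eigenvector constituents, applies the displayed reduced bracket, re-projects onto $\CC_\g^{(0)}$ and $\CC_\g^{(1)}$, and matches the outcome with $\varphi$ of the right-hand side.

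I expect this case analysis in (R1) and (R3) to be the main obstacle: the simultaneous bookkeeping of the parities controlling membership in $S$ versus $S^{\bot}$, the signs $\Omega(\tau)=\pm1$, the inner-product coefficients $(\rho\cdot e_i)$ in the central terms, and — most delicately — the factors $\sqrt{-1}$, which are precisely what make the images of the $k=1$ elements (landing along $E_{1,2}-E_{4,3}$, $E_{2,1}-E_{3,4}$ and $E_{1,4}+E_{2,3}$, where $\sigma$ shifts the $t_2$-power or swaps root vectors) combine with the correct signs and produce $\alpha^{\vee}(\rho+\tau)$ rather than its negative. Everything else is either the clean basis count of the first paragraph or a routine toroidal computation, so once the $(\text{R1})$/$(\text{R3})$ sign-and-parity ledger is settled the proposition follows.
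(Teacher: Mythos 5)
Your proposal is correct and takes essentially the same route as the paper: the paper likewise treats bijectivity as immediate (``clearly, $\varphi$ is a linear isomorphism'') and devotes all of its effort to verifying (R1)--(R4) through the bracket formula \eqref{t-lb} collapsed to its $i=0,1$ terms exactly as you describe, with your consistency checks on the central elements (vanishing of $\mathbf{k}_1(p)$ for $p\neq -1$, the proportionality $\varphi(C_2(2m,2s))=-\tfrac{m}{s}\varphi(C_1(2m,2s))$) matching the paper's appeal to Lemma \ref{lem:cg} and \eqref{rela:1}. Be aware, though, that the parity-and-sign case analysis for (R1)--(R3) which you defer as ``the main obstacle'' is not a side issue but constitutes the entire body of the paper's proof (four cases for (R1), two for (R2), and the two identities of (R3), each split into subcases indexed by which of $S_0,S_1,S_2,S^{\bot}$ contain $\rho$, $\tau$, $\rho+\tau$), so a complete write-up would still require carrying out those subcase computations.
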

\begin{proof}
Clearly, $\varphi$ is a linear isomorphism. Then
we only need to check that $\varphi$ is a Lie homomorphism, i.e., $\varphi$ preserves relations $\text{(R1)-(R4)}$.
Set $\rho=(2m+j)e_1+(2n+k)e_2$ and $\tau=(2m'+j')e_1+(2n'+k')e_2$, where $m,m',n,n'\in \Z$ and $j,k,j',k'\in \{0,1\}$.

For $\text{(R1)}$, we divide the argument into four cases: (i) $\rho\notin S,\,\tau\in S$,
    (ii) $\rho,\, \tau \notin S$,
  (iii) $\rho, \tau\in S\  \text{and}\ \rho+\tau \in S$,
   (iv) $\rho, \tau\in S\  \text{and}\ \rho+\tau \notin S$.

We prove the case (i), and other cases can be proved similarly. For (i), we divide it into two subcases.

\textbf{Subcase 1.}  If $\rho\notin S,\,\tau\in S_{0}\cup S_{1}$, i.e., $j=k=1,\ k'=0$, then $\Omega(\tau)=\frac{1}{2}((-1)^{j'}-1)$. We have
\begin{align*}
\begin{split}
 &\qquad[\varphi(\alpha^{\vee}(\rho)),\varphi(\alpha^{\vee}(\tau))]
\\
 &=4\sqrt{-1}\left[\left(((E_{1,4}+E_{2,3})\ot t_{2}^{n})^{(1)}\right)\lf(m+\frac{1}{2}\rg),
 \left(((E_{1,1}-E_{3,3})\ot t_{2}^{n'})^{(j')}\right)\lf(m'+\frac{j'}{2}\rg)\right]\\
&=\sqrt{-1}\sum_{l\in \N}\binom{m+\frac{1}{2}}{l}\left( (E_{1,4}+E_{2,3})\ot t_{2}^{n}+(E_{4,1}+E_{3,2})\ot t_{2}^{n+1}\right)_{l}\\
 &\left((E_{1,1}-E_{3,3}+(-1)^{j'}(E_{4,4}-E_{2,2}))\ot t_{2}^{n'}\right)\lf(m+m'+\frac{j'+1}{2}-l\rg)\\
 & = 2\sqrt{-1}((-1)^{j'}-1) \left(((E_{1,4}+E_{2,3})\ot t_{2}^{n+n'})^{(0)}\right)\lf(m+m'+\frac{j'+1}{2}\rg)
 \\
 &=2\Omega(\tau)\varphi(\alpha^{\vee}(\rho+\tau)).
\end{split}\end{align*}

\textbf{Subcase 2.} If $\rho\notin S,\, \tau\in S_{2}$, i.e., $
j=k=k'=1,\ j'=0$, then $\Omega(\tau)=1$.
We have
\begin{align*}
\begin{split}
 &\qquad[\varphi(\alpha^{\vee}(\rho)),\varphi(\alpha^{\vee}(\tau))]
\\
 &=-4\left[\left(((E_{1,4}+E_{2,3})\ot t_{2}^{n})^{(1)}\right)\lf(m+\frac{1}{2}\rg),
 \left(((E_{1,4}+E_{2,3})\ot t_{2}^{n'})^{(0)}\right)(m') \right]\\
&=-\sum_{l\in \N}\binom{m+\frac{1}{2}}{l}\left( (E_{1,4}+E_{2,3})\ot t_{2}^{n}+(E_{4,1}+E_{3,2})\ot t_{2}^{n+1}\right)_{l}\\
 &\left((E_{1,4}+E_{2,3})\ot t_{2}^{n'}-(E_{4,1}+E_{3,2})\ot t_{2}^{n'+1}
 \right)\lf(m+m'+\frac{1}{2}-l\rg)\\
 &=\left\{\begin{array}{lll}
2\left((E_{1,1}-E_{3,3}+E_{2,2}-E_{4,4})\ot 1 -\mk_{2}\right)\lf(m+m'+\frac{1}{2}\rg),&
\qquad n+n'+1= 0,\\
& \\
2((E_{1,1}-E_{3,3}+E_{2,2}-E_{4,4})\ot t_{2}^{n+n'+1})\lf(m+m'+\frac{1}{2}\rg)&\\
-\frac{2}{n+n'+1}(D\ot t_2^{n+n'+1}\mk_1)\lf(m+m'+\frac{1}{2}\rg),&\qquad n+n'+1\neq 0, \\
  \end{array}
    \right.
    \end{split}\end{align*}
    and
    \begin{align*}
    \begin{split}
        &\qquad\varphi(\alpha^{\vee}(\rho+\tau))\\
        &=\left(((E_{1,1}-E_{3,3})\ot t_{2}^{n+n'+1})^{(1)}\right)\lf(m+m'+\frac{1}{2}\rg)\\
        &=\left\{\begin{array}{lll}
\left((E_{1,1}-E_{3,3}+E_{2,2}-E_{4,4})\ot 1 -\mk_{2}\right)\lf(m+m'+\frac{1}{2}\rg),&
\quad n+n'+1= 0,\\
& \\
((E_{1,1}-E_{3,3}+E_{2,2}-E_{4,4})\ot t_{2}^{n+n'+1})\lf(m+m'+\frac{1}{2}\rg)&\\
-\frac{1}{n+n'+1}(D\ot t_2^{n+n'+1}\mk_1)\lf(m+m'+\frac{1}{2}\rg),&\quad n+n'+1\neq 0. \\
  \end{array}
    \right.
        \end{split}
    \end{align*}
    This shows $\text{(R1)}$.

    For $\text{(R2)}$, we divide the argument into two cases:
(i) $ \rho\in S,\,\tau\in S$,   (ii) $\rho\notin S,\,\tau\in S$. We prove the case (ii),  and the case (i) can be proved similarly. For (ii), we divide it into two subcases.

  \textbf{Subcase 1.}  If $\rho\notin S,\, \tau\in S_{0}\cup S_{1}$, i.e., $j=k=1,\ k'=0$, then $\Omega(\tau)=\frac{1}{2}((-1)^{j'}-1)$. We have
\begin{align*}
\begin{split}
 &\qquad[\varphi(\alpha^{\vee}(\rho)),\varphi(x_{+}(\tau))]
\\
 &=4\sqrt{-1}\left[\left(((E_{1,4}+E_{2,3})\ot t_{2}^{n})^{(1)}\right)\left(m+\frac{1}{2}\right),\left((E_{1,3}\ot t_{2}^{n'})^{(j')}\right)\left(m'+\frac{j'}{2}\right)\right]\\
&=\sqrt{-1}\sum_{l\in \N}\binom{m+\frac{1}{2}}{l}\left( (E_{1,4}+E_{2,3})\ot t_{2}^{n}+(E_{4,1}+E_{3,2})\ot t_{2}^{n+1}\right)_{l}\\
&\left( E_{1,3}\ot t_{2}^{n'}+(-1)^{j'}E_{4,2}\ot t_{2}^{n'+1}\right)\lf(m+m'+\frac{j'+1}{2}-l\rg)\\
&=\sqrt{-1}((-1)^{j'}-1)\left(((E_{1,2}-E_{4,3})\ot t_{2}^{n+n'+1})^{(0)}\right)\left(m+m'+\frac{j'+1}{2}\right)\\
 &=2\Omega(\tau)\varphi(x_{+}(\rho+\tau)),
\end{split}\end{align*}
and

\begin{equation*}
\begin{split}
&\qquad[\varphi(\alpha^{\vee}(\rho)),\varphi(x_{-}(\tau))]
\\
 &=4\sqrt{-1}\left[\left(((E_{1,4}+E_{2,3})\ot t_{2}^{n})^{(1)}\right)\left(m+\frac{1}{2}\right),\left((E_{3,1}\ot t_{2}^{n'})^{(j')}\right)\left(m'+\frac{j'}{2}\right)\right]\\
&=\sqrt{-1}\sum_{l\in \N}\binom{m+\frac{1}{2}}{l}\left( (E_{1,4}+E_{2,3})\ot t_{2}^{n}+(E_{4,1}+E_{3,2})\ot t_{2}^{n+1}\right)_{l}\\
&\left( E_{3,1}\ot t_{2}^{n'}+(-1)^{j'}E_{2,4}\ot t_{2}^{n'-1}\right)\left(m+m'+\frac{j'+1}{2}-l\right)\\
&=\sqrt{-1}((-1)^{j'}-1)\left(((E_{3,4}-E_{2,1})\ot t_{2}^{n+n'})^{(0)}\right)\left(m+m'+\frac{j'+1}{2}\right)\\
 &=2\Omega(\tau)\varphi(x_{-}(\rho+\tau)).
\end{split}\end{equation*}

    \textbf{Subcase 2.}  If $\rho\notin S,\, \tau\in S_{2}$, i.e., $j=k=k'=1,\ j'=0$, then
    $\Omega(\tau)=1$. We have
\begin{equation*}
\begin{split}
 &\qquad\left[\varphi(\alpha^{\vee}(\rho)),\varphi(x_{+}(\tau))\right] \\
 &=-2\left[\left(((E_{1,4}+E_{2,3})\ot t_{2}^{n})^{(1)}\right)\left(m+\frac{1}{2}\right),\left(((E_{1,2}-E_{4,3})\ot t_{2}^{n'+1})^{(0)}\right)(m')\right]\\
&=-\sum_{l\in \N}\binom{m+\frac{1}{2}}{l}\left( (E_{1,4}+E_{2,3})\ot t_{2}^{n}+(E_{4,1}+E_{3,2})\ot t_{2}^{n+1}\right)
_{l}\\
&\left((E_{1,2}-E_{4,3})\ot t_{2}^{n'+1}\right)\left(m+m'+\frac{1}{2}-l\right)\\
&=4\left((E_{1,3}\ot t_{2}^{n+n'+1})^{(1)}\right)\left(m+m'+\frac{1}{2}\right)\\
&=2\Omega(\tau)\varphi(x_{+}(\rho+\tau)),\\
\end{split}
\end{equation*}
and
\begin{align*}
\begin{split}
 &\qquad\left[\varphi(\alpha^{\vee}(\rho)),\varphi(x_{-}(\tau)\right] \\
 &=2\left[\left(((E_{1,4}+E_{2,3})\ot t_{2}^{n})^{(1)}\right)\left(m+\frac{1}{2}\right),\left(((E_{2,1}-E_{3,4})\ot t_{2}^{n'})^{(0)}\right)(m')\right]\\
&=\sum_{l\in \N}\binom{m+\frac{1}{2}}{l}\left( (E_{1,4}+E_{2,3})\ot t_{2}^{n}+(E_{4,1}+E_{3,2})\ot t_{2}^{n+1}\right)
_{l}\\
&\left((E_{2,1}-E_{3,4})\ot t_{2}^{n'}\right)\left(m+m'+\frac{1}{2}-l\right)\\
&=4\left((E_{3,1}\ot t_{2}^{n+n'+1})^{(1)}\right)\left(m+m'+\frac{1}{2}\right)\\
&=2\Omega(\tau)\varphi(x_{-}(\rho+\tau)).
\end{split}
\end{align*}

For $\text{(R3)}$, we have
\begin{equation*}
\begin{split}
  &\left((E_{1,3}\ot t_{2}^n)^{(j)}\right)_{l}\left((E_{1,3}\ot t_{2}^{n'}
  )^{(j')}\right)=0,\\
 &\left((E_{1,3}\ot t_{2}^n)^{(j)}\right)_{l}\left(
  ((E_{1,2}-E_{4,3})
  \ot t_{2}^{n'})^{(0)}\right)=0,\\
& \left(
 ((E_{1,2}-E_{4,3})
  \ot t_{2}^{n})^{(0)}\right)_{l}\left(
 ((E_{1,2}-E_{4,3})
  \ot t_{2}^{n'})^{(0)}\right)=0
\end{split}
\end{equation*}
for all $l\in \N$. This implies that  $[x_{+}(\rho),x_{+}(\tau)]=0$. Similarly we have that $[x_{-}(\rho),x_{-}(\tau)]=0$.
Thus the first identity is shown.

For the second identity, we divide the proof into two cases:
(i) $\rho,\tau\in S \ \text{and} \ \rho+\tau\notin S$,  (ii) $ \rho,\tau\in S    \ \text{and} \ \rho+\tau\in S$.

For (i), we divide it into two subcases. The case (ii) can be proved similarly.

\textbf{Subcase 1.}  If $\rho\in S_{1},\,\tau\in S_{2}$, i.e., $j=k'=1,\ j'=k=0$,  then $\Omega(\tau)=1$. We have
\begin{equation*}
\begin{split}
&\qquad\lf[\varphi(x_{+}(\rho)),\varphi(x_{-}(\tau))\rg]\\
 &=-2\sqrt{-1}\lf[\left((E_{1,3}\ot t_{2}^{n})^{(1)}\right)\lf(m+\frac{1}{2}\rg),\left(((E_{2,1}-E_{3,4})\ot t_{2}^{n'})^{(0)}\right)(m')\rg]\\
&=-\sqrt{-1}\sum_{l\in \N}\binom{m+\frac{1}{2}}{l}\left(E_{1,3}\ot t_{2}^{n}-E_{4,2}\ot t_{2}^{n+1}\right)_{l}\left((E_{2,1}-E_{3,4})\ot t_{2}^{n'}\right)\lf(m+m'+\frac{1}{2}-l\rg)\\
&=2\sqrt{-1}\left(((E_{1,4}+E_{2,3})\ot t_{2}^{n+n'})^{(1)}\right)\lf(m+m'+\frac{1}{2}\rg)\\
&=\Omega(\tau)\varphi(\alpha^{\vee}(\rho+\tau)).\\
\end{split}
\end{equation*}

\textbf{Subcase 2.}  If $\rho\in S_{2},\,\tau\in S_{1}$, i.e., $j=k'=0,\ j'=k=1$,  then $\Omega(\tau)=-1$. We have
\begin{equation*}
\begin{split}
&\qquad\lf[\varphi(x_{+}(\rho)),\varphi(x_{-}(\tau))\rg]\\
 &=2\sqrt{-1}\lf[\left(((E_{1,2}-E_{4,3})\ot t_{2}^{n+1})^{(0)}\right)(m),\left((E_{3,1}\ot t_{2}^{n'})^{(1)}\right)\lf(m'+\frac{1}{2}\rg)\rg]\\
 &=\sqrt{-1}\sum_{l\in \N}\binom{m}{l}\left(
   (E_{1,2}-E_{4,3})\ot t_{2}^{n+1}\right)_{l}\left(E_{3,1}\ot t_{2}^{n'}-E_{2,4}\ot t_{2}^{n'-1} \right)\lf(m+m'+\frac{1}{2}-l\rg)\\
    &= -2\sqrt{-1}\left(((E_{1,4}+E_{2,3})\ot t_{2}^{n+n'})^{(1)}\right)\lf(m+m'+\frac{1}{2}\rg)\\
    &=\Omega(\tau)\varphi(\alpha^{\vee}(\rho+\tau)).
\end{split}
\end{equation*}
This completes the proof of $\text{(R3)}$.

 Finally, $\text{(R4)}$ follows from the Lemma \ref{lem:cg} and the equation \eqref{rela:1}.
Thus $\varphi$ is
a Lie isomorphism from $\widehat{\mathcal{G}}(\mathcal{J})$ to $\widehat{\mathcal{C}}_{\g}[\sigma]$, as desired.
\end{proof}

Recall that $\widehat{\mathcal{C}}_{\g}[\sigma]$ is  a $\frac{1}{2}\Z$-graded Lie algebra with the grading given by \eqref{zgradingtw}.
  On the other hand,
   $\widehat{\mathcal{G}}(\mathcal{J})$ is a $\Z$-graded Lie algebra with the grading given by  \eqref{zgradtkk}.
By definition, we immediately have the following result.

\begin{lem}\label{lem:zvsfrac12z} For every $n\in \Z$, we have that $\varphi(\widehat{\mathcal{G}}(\mathcal{J})_{(n)})=\widehat{\mathcal{C}}_{\g}[\sigma]_{(\frac{n}{2})}$.
\end{lem}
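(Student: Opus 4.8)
The plan is to reduce the statement to a gradewise comparison and then read off both gradings on the explicit basis of $\widehat{\mathcal{G}}(\mathcal{J})$. Since Proposition \ref{prop:iso-Cg-tkk} already gives that $\varphi$ is a linear isomorphism, I would only need to prove the inclusion $\varphi(\widehat{\mathcal{G}}(\mathcal{J})_{(n)}) \subseteq \widehat{\mathcal{C}}_{\g}[\sigma]_{(n/2)}$ for every $n \in \Z$: the bijectivity of $\varphi$ together with the fact that both gradings exhaust the whole space then upgrades these inclusions to the claimed equalities. Thus the entire content is a degree computation on the spanning vectors $x_{\pm}(\rho)$, $\alpha^{\vee}(\tau)$, $C_i(\upsilon)$ of $\widehat{\mathcal{G}}(\mathcal{J})$.

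First I would record the two degree functions explicitly. On the TKK side, \eqref{zgradtkk} makes $A \ot x^{\rho}$ homogeneous of degree $-(\rho \cdot e_1)$ and $\langle x^{\rho}, x^{\tau} \rangle$ homogeneous of degree $-((\rho + \tau) \cdot e_1)$, since $\mathrm{ad}\,\mathrm{d}_1$ acts by the first coordinate; concretely $x_{\pm}(2m+j, 2n+k)$ and $\alpha^{\vee}(2m+j, 2n+k)$ have degree $-(2m+j)$, while $C_1(2m, 2n)$, $C_2(2m, 0)$ and $C_2(2m, 2s)$ have degree $-2m$. On the twisted side, \eqref{zgradingtw} gives $\deg a(r/2) = p - 1 - r/2$ for $a \in \CC_{\g(p)} \cap \CC_{\g}^{(r)}$, where the $\Z$-grading of $\CC_{\g}$ fixed in \eqref{cg-graded} places every $\fsp_4$-valued vector $x \ot t_2^{\ell}$ and $\mk_2$ in $\CC_{\g(1)}$ and every $t_2^{\ell}\mk_1$ in $\CC_{\g(0)}$.

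Next I would run through the defining formulas for $\varphi$ family by family and check the factor of two. For the $\fsp_4$-type images the offset $p - 1$ vanishes: e.g. $\varphi(x_{+}(2m+j, 2n)) = 2\big((E_{1,3}\ot t_2^n)^{(j)}\big)(m + j/2)$ has underlying conformal vector in $\CC_{\g(1)} \cap \CC_{\g}^{(j)}$ and mode $(2m+j)/2$, hence twisted degree $1 - 1 - (2m+j)/2 = -(2m+j)/2$, exactly half the TKK degree $-(2m+j)$; the same cancellation holds verbatim for the remaining $x_{\pm}$, $\alpha^{\vee}$ entries in both cases $k = 0$ and $k = 1$. For the central generators the offset $p - 1 = -1$ is compensated by the mode shift built into $\varphi$: thus $\varphi(C_1(2m,2n)) = \tfrac12 (t_2^n\mk_1)(m-1)$, with $t_2^n\mk_1 \in \CC_{\g(0)} \cap \CC_{\g}^{(0)}$, has degree $0 - 1 - (m-1) = -m = (-2m)/2$, and likewise for $C_2(2m,0) \mapsto \tfrac12\mk_2(m)$ and $C_2(2m,2s) \mapsto -\tfrac{m}{2s}(t_2^s\mk_1)(m-1)$.

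The one point deserving care, rather than mechanical substitution, is verifying that each image has the correct $\CC_{\g}$-grading degree $p$ and that the $\sigma$-parity encoded by the superscript $(j)$ agrees with the half-integer part of the mode. This is exactly what forces $p - 1 = 0$ on the $\fsp_4$-part and $p - 1 = -1$ on the $\mk_1$-part, and hence what produces the uniform halving of degrees; once these bookkeeping facts are tabulated the lemma follows at once, matching the assertion that it holds ``by definition''.
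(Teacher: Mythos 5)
Your proof is correct and is essentially the paper's own argument made explicit: the paper disposes of this lemma with ``by definition,'' and your generator-by-generator degree check (using \eqref{zgradtkk}, \eqref{zgradingtw}, \eqref{cg-graded} and the formulas defining $\varphi$), together with the standard upgrade from inclusions to equalities via bijectivity of $\varphi$ and directness of both gradings, is precisely the bookkeeping that claim suppresses.
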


For every $\ell\in\C$, we introduce the quotient $\widehat{\CC_\g}$-module
\begin{align*}V_{\widehat{\mathcal{C}_{\g}}}(\ell,0)=V_{\widehat{\CC_\g}}/K_\ell,
\end{align*}
where
$K_\ell$ is the $\widehat{\CC_\g}$-submodule of $V_{\widehat{\CC_\g}}$ generated by
the element  $\mk_{1}(-1)\mathbbm{1}-\ell$.
Since $\mk_1$ is central in $\widehat{\mathcal{C}_{\g}}$, $K_\ell$ is an ideal of  $V_{\widehat{\CC_\g}}$ (as a vertex algebra).
Then $V_{\widehat{\mathcal{C}_{\g}}}(\ell,0)$ is naturally a vertex algebra.
On the other hand, recall that $\sigma$ can be uniquely extended to an automorphism of $V_{\wh{\CC_{\g}}}$ with order $2$ (see \eqref{extendsigma}).
As $\sigma(\mk_1)=\mk_1$,  $\sigma$ preserves the ideal $K_\ell$, and hence
induces an order $2$ automorphism, still called $\sigma$, of the quotient vertex algebra $V_{\widehat{\CC_{\g}}}(\ell,0)$.

Recall that $V_{\widehat{\CC_\g}}$ is a $\Z$-graded vertex algebra  with the grading given by \eqref{VCZgrading}.
Since the ideal $K_\ell$ is graded, $V_{\widehat{\CC_{\g}}}(\ell,0)$ is a $\Z$-graded vertex algebra as well.
Furthermore, $\sigma$ is a graded automorphism of $V_{\widehat{\CC_{\g}}}(\ell,0)$, as it is graded as an automorphism of $\CC_\g$ (and hence $V_{\wh{\CC_\g}}$).

\begin{definition}{\em A $\widehat{\mathcal{G}}(\mathcal{J})$-module $W$ is said to be {\em of level $\ell\in\C$} if $2C_1$ acts as the scalar $\ell$, and is said to be {\em restricted } if for any $w\in W$, $n\in\mathbb{Z}$ and $s\in\Z^{\times}$,
\begin{eqnarray*}
x_{\pm}(m,n)w=0,\ \alpha^{\vee}(m,n)w=0,\
C_1(2m,2s)w=0,\ C_2(2m,0)w=0
 \end{eqnarray*}
whenever $m$ is sufficiently large.}
\end{definition}

For $a\in\g$, set
 $$\overline{a}^{\sigma}(z)=\sum_{r\in \Z}\varphi^{-1}(a^{(r)}(\frac{r}{2}))
z^{-\frac{r}{2}-1}\in \widehat{\mathcal{G}}[[z^{\frac{1}{2}},z^{-\frac{1}{2}}]],$$
recalling that $a^{(r)}=\frac{1}{2}(a+(-1)^r \sigma(a))$.
Note that a $\widehat{\mathcal{G}}(\mathcal{J})$-module $W$ is restricted if and only if $\overline{a}^{\sigma}(z)\in \mathrm{Hom}(W,W((z^{\frac{1}{2}})))$
for all $a\in \g$.
The following is the first main result of this paper.

\begin{thm}\label{rem} Let $\ell$ be any complex number.
 Every restricted $\widehat{\mathcal{G}}(\mathcal{J})$-module $W$ of level $\ell$ is naturally a $\sigma$-twisted $V_{\widehat{\CC_{\g}}}(\ell,0)$-module with $Y_{W}(a,z)=\overline{a}^{\sigma}(z)$ for $a\in\g$. On the other hand, every $\sigma$-twisted $V_{\widehat{\CC_{\g}}}(\ell,0)$-module $W$ is naturally a restricted $\widehat{\mathcal{G}}(\mathcal{J})$-module of level $\ell$ with $\overline{a}^{\sigma}(z)=Y_{W}(a,z)$ for $a\in\g$.
 Furthermore, $\Z$-graded restricted $\widehat{\mathcal{G}}(\mathcal{J})$-modules of level $\ell$ are exactly $\frac{1}{2}\Z$-graded
 $\sigma$-twisted $V_{\widehat{\CC_{\g}}}(\ell,0)$-modules.
\end{thm}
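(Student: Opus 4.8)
The plan is to obtain the theorem by transporting module structures across the Lie isomorphism $\varphi$ of Proposition \ref{prop:iso-Cg-tkk}, applying Theorem \ref{re-m}, and reading the level condition as the condition for descent to the quotient vertex algebra. First I would note that, since $\varphi\colon\widehat{\mathcal{G}}(\mathcal{J})\to\widehat{\mathcal{C}}_{\g}[\sigma]$ is an isomorphism of Lie algebras, a $\widehat{\mathcal{G}}(\mathcal{J})$-module structure on a vector space $W$ is the same datum as a $\widehat{\mathcal{C}}_{\g}[\sigma]$-module structure, the element $y\in\widehat{\mathcal{C}}_{\g}[\sigma]$ acting by $\varphi^{-1}(y)$. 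Under this identification, for $a\in\CC_\g$ the operator-valued series $a^{\sigma}(z)$ on $W$ is literally equal to $\overline{a}^{\sigma}(z)=\sum_{r\in\Z}\varphi^{-1}(a^{(r)}(r/2))z^{-r/2-1}$; hence, as remarked just before the statement, $W$ is a restricted $\widehat{\mathcal{G}}(\mathcal{J})$-module if and only if it is a restricted $\widehat{\mathcal{C}}_{\g}[\sigma]$-module. Theorem \ref{re-m} then converts restricted $\widehat{\mathcal{C}}_{\g}[\sigma]$-modules into $\sigma$-twisted $V_{\widehat{\CC_\g}}$-modules with $Y_W(a,z)=a^{\sigma}(z)$, which for $a\in\g\subseteq\CC_\g$ reads $Y_W(a,z)=\overline{a}^{\sigma}(z)$ after transport by $\varphi^{-1}$.

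The substantive point is to match the level-$\ell$ condition with the descent to $V_{\widehat{\CC_\g}}(\ell,0)=V_{\widehat{\CC_\g}}/K_\ell$. From the defining formula for $\varphi$ one has $\varphi(2C_1)=\mathbf{k}_1(-1)$, so ``$2C_1$ acts as $\ell$'' is the same as ``$\mathbf{k}_1(-1)$ acts as $\ell$''. I would then record that, because $\partial\mathbf{k}_1=0$, the defining relations of $\widehat{\mathcal{C}}_{\g}[\sigma]$ force $m\,\mathbf{k}_1(m-1)=0$, so the only nonzero Fourier mode of $\mathbf{k}_1$ is $\mathbf{k}_1(-1)$ (consistently with the basis listed above). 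Consequently, on the associated $\sigma$-twisted $V_{\widehat{\CC_\g}}$-module, $Y_W(\mathbf{k}_1,z)=\mathbf{k}_1^{\sigma}(z)=\ell\,\mathrm{Id}_W$, and therefore $Y_W(\mathbf{k}_1(-1)\mathbbm{1}-\ell\mathbbm{1},z)=0$. Since $K_\ell$ is the ideal generated by $\mathbf{k}_1(-1)\mathbbm{1}-\ell\mathbbm{1}$ and the subspace $\{v\in V_{\widehat{\CC_\g}}\mid Y_W(v,z)=0\}$ is stable under all operators $u_n$ and under $\mathcal{D}$ (by the twisted Jacobi identity), the module $W$ descends to a $\sigma$-twisted $V_{\widehat{\CC_\g}}(\ell,0)$-module precisely when it has level $\ell$. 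Running this equivalence in both directions yields the stated correspondence, together with the identity $Y_W(a,z)=\overline{a}^{\sigma}(z)$.

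For the graded assertion I would combine Lemma \ref{lem:zvsfrac12z} with Proposition \ref{prop:gversion} (taken with $T=2$). Since $\varphi(\widehat{\mathcal{G}}(\mathcal{J})_{(n)})=\widehat{\mathcal{C}}_{\g}[\sigma]_{(n/2)}$, a $\Z$-grading $W=\oplus_{n\in\Z}W_{(n)}$ compatible with the $\Z$-grading of $\widehat{\mathcal{G}}(\mathcal{J})$ becomes, after identifying grading lattices via $n\leftrightarrow n/2$, a $\tfrac12\Z$-grading compatible with $\widehat{\mathcal{C}}_{\g}[\sigma]$; Proposition \ref{prop:gversion} then identifies these with $\tfrac12\Z$-graded $\sigma$-twisted $V_{\widehat{\CC_\g}}$-modules, the level condition being imposed exactly as in the previous paragraph to pass to $V_{\widehat{\CC_\g}}(\ell,0)$.

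I expect the only genuinely delicate step to be the level/quotient compatibility of the middle paragraph: verifying that the single scalar relation $\mathbf{k}_1(-1)=\ell$ already annihilates the whole ideal $K_\ell$ on $W$, which relies on the vanishing of every other mode of $\mathbf{k}_1$ together with the elementary stability of $\{v\mid Y_W(v,z)=0\}$ under the twisted module action. The remaining steps are a formal transport of structure through $\varphi$, the real computational labour having already been discharged in Proposition \ref{prop:iso-Cg-tkk}; only a little care is needed in the graded part to reconcile the $\Z$-indexing on the $\widehat{\mathcal{G}}(\mathcal{J})$-side with the $\tfrac12\Z$-indexing produced by $\varphi$.
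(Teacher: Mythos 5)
Your proposal is correct and follows essentially the same route as the paper's proof: transport the module structure through the Lie isomorphism $\varphi$ of Proposition \ref{prop:iso-Cg-tkk}, convert via Theorem \ref{re-m}, and obtain the graded statement from Lemma \ref{lem:zvsfrac12z} together with Proposition \ref{prop:gversion}. The only difference is one of detail: you spell out the descent to the quotient $V_{\widehat{\CC_\g}}(\ell,0)$ (vanishing of all modes $\mathbf{k}_1(m)$ with $m\neq -1$, hence $Y_W(\mathbf{k}_1,z)=\ell\,\mathrm{Id}_W$, plus stability of $\{v\mid Y_W(v,z)=0\}$ under the twisted action and $\mathcal{D}$), a step the paper treats as immediate after remarking that $V_{\widehat{\CC_\g}}$ is generated by $\g\oplus\C\mathbf{k}_1$.
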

\begin{proof} Let $W$ be a restricted $\widehat{\mathcal{G}}(\mathcal{J})$-module of level $\ell$. From Proposition \ref{prop:iso-Cg-tkk}, we know that it is a restricted $\widehat{\mathcal{C}}_{\g}[\sigma]$-module with $$a^{\sigma}(z)=\overline{a}^{\sigma}(z)\in \Hom(W,W((z^{\frac{1}{2}})))$$
 for $a\in\g$ and with $\mk_{1}(-1)$ acts as the scalar $\ell$.
  Note that the vertex algebra $V_{\widehat{\mathcal{C_{\g}}}}$ is generated by $\g\op\C\mk_{1}$.
  From Theorem \ref{re-m}, we know that $W$ is a $\sigma$-twisted $V_{\widehat{\mathcal{C_{\g}}}}$-module with $Y_{W}(a,z)=\overline{a}^{\sigma}(z)$ for $a\in\g$.
  Thus $W$ becomes a $\sigma$-twisted $V_{\widehat{\CC_{\g}}}(\ell,0)$-module with $Y_{W}(a,z)=\overline{a}^{\sigma}(z)$ for $a\in\g$.
If $W$ is $\Z$-graded, then by Lemma \ref{lem:zvsfrac12z}, $W$ is $\frac{1}{2}\Z$-graded as a $\widehat{\mathcal{C}}_{\g}[\sigma]$-module.
This together with Proposition \ref{prop:gversion} implies that $W$ is a $\frac{1}{2}\Z$-graded $\sigma$-twisted $V_{\widehat{\CC_{\g}}}(\ell,0)$-module.

On the other hand, let $W$ be a $\sigma$-twisted $V_{\widehat{\CC_{\g}}}(\ell,0)$-module. Then it is a $\sigma$-twisted $V_{\widehat{\mathcal{C_{\g}}}}$-module with $\mk_{1}$ acts as the scalar $\ell$. From Theorem \ref{re-m}, we see that $W$ is a restricted $\widehat{\mathcal{C}}_{\g}[\sigma]$-module with $a^{\sigma}(z)=Y_{W}(a,z)$ for $a\in\g$ and with $\mk_{1}(-1)$ acts as the scalar $\ell$. Thus $W$ is a restricted $\widehat{\mathcal{G}}(\mathcal{J})$-module of level $\ell$ with $\overline{a}^{\sigma}(z)=Y_{W}(a,z)$ for $a\in\g$.
Assume further that $W$ is $\frac{1}{2}\Z$-graded. Again by Proposition \ref{prop:gversion} and Lemma \ref{lem:zvsfrac12z}, we see that
$W$ is a $\Z$-graded  $\widehat{\mathcal{G}}(\mathcal{J})$-module.
\end{proof}

\section{Classification of irreducible graded twisted modules of
 $L_{\widehat{\CC_{\g}}}(\ell,0)$}
 In this section, for any nonnegative integer $\ell$, we introduce a quotient vertex algebra $L_{\wh{\CC_\g}}(\ell,0)$ of $V_{\wh{\CC_\g}}(\ell,0)$, and  associate the integrable restricted $\wh{\mathcal{G}}(\mathcal{J})$-modules of level $\ell$
 with the $\sigma$-twisted $L_{\wh{\CC_\g}}(\ell,0)$-modules.
We also classify the irreducible $\frac{1}{2}\N$-graded $\sigma$-twisted $L_{\wh{\CC_\g}}(\ell,0)$-modules.


As usual, write
\[\{\pm 2\epsilon_1, \pm 2\epsilon_2, \pm(\epsilon_1-\epsilon_2), \pm(\epsilon_1+\epsilon_2)\}\]
for the root system of the simple Lie algebra $\mathfrak{sp}_4$. Set
\[\dot\Delta=\{\pm 2\epsilon_1, \pm 2\epsilon_2, \pm(\epsilon_1-\epsilon_2)\}.\]
For each $\beta\in \dot\Delta$, let us fix
 a root vector $e_\beta\in \mathfrak{sp}_4$ with
\[e_{2\epsilon_i}=E_{i,i+2},\ e_{-2\epsilon_i}=E_{i+2,i},\ e_{\epsilon_i-\epsilon_j}=E_{i,j}-E_{j+2,i+2}\quad (\text{$i,j=1,2$ with $i\ne j$}).\]

Let $\ell$ be a  nonnegative integer.
Write $J_{\widehat{\CC_{\g}}}(\ell,0)$ for the
$\widehat{\CC_{\g}}$-submodule of $V_{\widehat{\CC_{\g}}}(\ell,0)$ generated by the elements
\begin{equation*}
\left((e_\beta\ot t_{2}^{n})_{-1}\right)^{c_\beta\ell+1}\mathbbm{1}\quad\text{for all}\ \beta\in \dot\Delta,\ n\in \Z,
\end{equation*}
where $c_\beta=\mathrm{tr}(e_\beta e_{-\beta})\in \{1,2\}$.
We have:

\begin{lem}\label{lem:ideal-J}  $J_{\widehat{\CC_{\g}}}(\ell,0)$ is a $\sigma$-invariant graded ideal of $V_{\widehat{\CC_{\g}}}(\ell,0)$ (as a  vertex algebra).
\end{lem}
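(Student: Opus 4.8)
The plan is to establish three things about $J:=J_{\widehat{\CC_\g}}(\ell,0)$: that it is a two-sided ideal of the vertex algebra $V_{\widehat{\CC_\g}}(\ell,0)$, that it is graded, and that it is $\sigma$-stable. The grading and $\sigma$-stability are quick; within the ideal property the single delicate point is $\D$-invariance, which I expect to be the technical heart.

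For the ideal property I would argue as follows. Recall from the proof of Theorem \ref{rem} that $V_{\widehat{\CC_\g}}(\ell,0)$ is generated as a vertex algebra by $\g\op\C\mk_1$, and that for $a\in\g\op\C\mk_1$ the mode $a_m$ acts as $a(m)\in\widehat{\CC_\g}$. Since $J$ is by construction a $\widehat{\CC_\g}$-submodule, it is stable under every such $a_m$. By the Borcherds iterate identity the subspace $\{v\in V_{\widehat{\CC_\g}}(\ell,0)\mid v_nJ\subseteq J\ \forall n\in\Z\}$ is a vertex subalgebra (it contains $\mathbbm{1}$, is $\D$-stable, and is closed under all products $u_mv$); containing the generating set $\g\op\C\mk_1$, it must equal $V_{\widehat{\CC_\g}}(\ell,0)$, so $v_nJ\subseteq J$ for all $v$. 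Granting $\D J\subseteq J$ (handled below), the skew-symmetry $u_nv=\sum_{i\ge0}\frac{(-1)^{n+i+1}}{i!}\D^i(v_{n+i}u)$ then upgrades this to a two-sided ideal, since $v_{n+i}u\in J$ and $\D^i$ preserves $J$.

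The grading and $\sigma$-stability are direct. Using \eqref{VCZgrading} and $e_\beta\ot t_2^n\in\CC_{\g (1)}$, each generator $g_{\beta,n}:=\big((e_\beta\ot t_2^n)_{-1}\big)^{c_\beta\ell+1}\mathbbm{1}$ is homogeneous of degree $c_\beta\ell+1$; since $U(\widehat{\CC_\g})$ acts by graded operators on the graded space, $J$ is graded. For $\sigma$-stability I apply \eqref{extendsigma}, giving $\sigma(g_{\beta,n})=\big((\sigma(e_\beta\ot t_2^n))_{-1}\big)^{c_\beta\ell+1}\mathbbm{1}$. Reading off $\sigma$ on the root vectors shows $\sigma(e_\beta\ot t_2^n)=e_{\beta'}\ot t_2^{n'}$ for a unique $\beta'\in\dot\Delta$ and $n'\in\Z$, with $c_{\beta'}=c_\beta$ (the involution fixes the short roots $\pm(\epsilon_1-\epsilon_2)$ and permutes the long roots $\pm2\epsilon_1,\pm2\epsilon_2$, hence preserves $\mathrm{tr}(e_\beta e_{-\beta})$). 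Thus $\sigma(g_{\beta,n})=g_{\beta',n'}$ is again a defining generator, and $(\beta,n)\mapsto(\beta',n')$ is a bijection of $\dot\Delta\times\Z$ because $\sigma^2=\mathrm{Id}$. Since $\widehat\sigma$ preserves $U(\widehat{\CC_\g})$ and $\sigma(a_mw)=(\sigma a)_m\sigma(w)$, this yields $\sigma(J)=J$.

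The main obstacle is $\D J\subseteq J$, for which it suffices to show $\D g_{\beta,n}\in J$ for each generator (then $\D(Xg_{\beta,n})=\partial(X)g_{\beta,n}+X\,\D g_{\beta,n}\in J$ for $X\in U(\widehat{\CC_\g})$, as $\partial$ preserves $\widehat{\CC_\g}$). Writing $u=e_\beta\ot t_2^n$ and using $[\D,u_m]=-m\,u_{m-1}$ together with $u_ju=0$ for all $j\ge0$ (immediate from $[e_\beta,e_\beta]=0$ and $\mathrm{tr}(e_\beta^2)=0$, which also gives $[u_{-1},u_{-2}]=0$), one computes $\D g_{\beta,n}=(c_\beta\ell+1)\,u_{-2}u_{-1}^{\,c_\beta\ell}\mathbbm{1}$. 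Now $g_{\beta,n}=u_{-1}^{\,k+1}\mathbbm{1}$, with $k=c_\beta\ell$, is the classical affine $\mathfrak{sl}_2$ singular vector attached to the $t_1$-loops of the triple $\{e_\beta\ot t_2^n,\,e_{-\beta}\ot t_2^{-n},\,[e_\beta,e_{-\beta}]\}$, whose $\mk_1$-central charge is exactly $k$. When $n=0$ these loops span a copy of the affine vertex algebra $V_{\widehat{\fsl_2}}(k,0)$ inside $V_{\widehat{\CC_\g}}(\ell,0)$ on which $\D$ restricts to the Sugawara $L_{-1}$, and the classical fact that the submodule generated by the singular vector is $\D$-stable gives $\D g_{\beta,0}\in J$ at once. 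For $n\neq0$ the same computation applies, the only difference being an extra central term $n\,c_\beta\mk_2$ appearing in $[e_\beta\ot t_2^n,\,e_{-\beta}\ot t_2^{-n}]$ beyond the affine one; I expect the technical heart of the proof to be verifying that every $\mk_2$-contribution produced this way is itself an image of $g_{\beta,n}$ under $\widehat{\CC_\g}$ and hence lies in $J$, so that the affine identity transports to our setting and $\D g_{\beta,n}\in J$ persists. The alternative of a direct induction (peeling off Cartan currents $[e_\beta,e_{-\beta}]\ot t_1^{-1}$ and tracking the resulting correction terms) is feasible but considerably messier.
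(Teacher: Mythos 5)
Most of your skeleton coincides with the paper's proof and is correct: $J$ is a $\widehat{\CC_\g}$-submodule, hence stable under all modes $v_n$ (the paper phrases this as "automatically a left ideal"); skew-symmetry plus $\D$-invariance upgrades this to a two-sided ideal; the generators $g_{\beta,n}=\left((e_\beta\ot t_2^n)_{-1}\right)^{c_\beta\ell+1}\mathbbm{1}$ are homogeneous and are permuted by $\sigma$ (with $c_{\beta'}=c_\beta$), giving gradedness and $\sigma$-stability; and $\D$-invariance reduces, via the derivation property of $\D$ over $\mathcal{U}(\widehat{\CC_\g})$, to showing $\D g_{\beta,n}\in J$. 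Your computation $\D g_{\beta,n}=(c_\beta\ell+1)\,u_{-2}u_{-1}^{c_\beta\ell}\mathbbm{1}$ and your Sugawara argument for $n=0$ are also fine.

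The genuine gap is exactly where you placed it: the case $n\neq 0$ is deferred as an expectation rather than proved, and the verification you sketch is aimed at the wrong target. The issue is not whether "$\mk_2$-contributions lie in $J$" --- every mode $\mk_2(m)$ lies in $\widehat{\CC_\g}$, so it preserves $J$ trivially --- but whether $\D g_{\beta,n}$ can be written at all in the form $X g_{\beta,n}$ with $X$ built from modes in $\widehat{\CC_\g}$; for that you need a Sugawara-type identity, and your uncorrected triple $\{e_\beta\ot t_2^n,\,e_{-\beta}\ot t_2^{-n},\,[e_\beta,e_{-\beta}]\}$ does not close into a copy of $\widehat{\fsl_2}$ inside $\widehat{\CC_\g}$, so there is no Sugawara operator to invoke. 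The paper's key device, which is the one idea missing from your proposal, is to absorb the central correction into the Cartan generator: $\beta^\vee+c_\beta n\mk_2$ is again an element of $\g\subset\CC_\g$, and since all conformal products involving $\mk_2$ vanish, the subalgebra $\mathcal{I}^n_\beta\subseteq\widehat{\CC_\g}$ generated by the modes of $e_\beta\ot t_2^n$, $e_{-\beta}\ot t_2^{-n}$, $\beta^\vee+c_\beta n\mk_2$ together with $\mk_1(-1)$ is an \emph{exact} copy of $\widehat{\fsl_2}$ (with $c_\beta\mk_1(-1)\mapsto\mk$) for \emph{every} $n$. Then $\mathcal{U}(\mathcal{I}^n_\beta)\mathbbm{1}$ is the level-$c_\beta\ell$ affine vacuum module, $g_{\beta,n}$ is the classical vector $(x_+\ot t^{-1})^{c_\beta\ell+1}\mathbbm{1}$, and your $n=0$ argument (or, as in the paper, the fact from [LL] that the coefficients of $x_\pm(z)^{k+1}\mathbbm{1}$ lie in the submodule generated by $(x_\pm\ot t^{-1})^{k+1}\mathbbm{1}$, combined with $\D v=\Res_z z^{-2}Y(v,z)\mathbbm{1}$ and $Y(g_{\beta,n},z)\mathbbm{1}=Y(e_\beta\ot t_2^n,z)^{c_\beta\ell+1}\mathbbm{1}$) applies verbatim for all $n$, giving $\D g_{\beta,n}\in\mathcal{U}(\mathcal{I}^n_\beta)g_{\beta,n}\subseteq J$.
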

\begin{proof} Since the
generators of $J_{\widehat{\CC_{\g}}}(\ell,0)$ are homogenous and $\sigma$-invariant,
it suffices to prove  that $J_{\widehat{\CC_{\g}}}(\ell,0)$ is an ideal of $V_{\widehat{\CC_{\g}}}(\ell,0)$.
 Note that, as a
$\widehat{\CC_{\g}}$-submodule of $V_{\widehat{\CC_{\g}}}(\ell,0)$, $J_{\widehat{\CC_{\g}}}(\ell,0)$
is automatically  a left ideal of  $V_{\widehat{\CC_{\g}}}(\ell,0)$.
Furthermore, it follows from the skew symmetry of vertex algebras (\cite{LL}) that
if  $J_{\widehat{\CC_{\g}}}(\ell,0)$ is $\mathcal{D}$-invariant, then $J_{\widehat{\CC_{\g}}}(\ell,0)$ is a (right) ideal.

Let $\beta\in \dot\Delta$ and $n\in \Z$. We denote by $\mathcal{I}^n_\beta$ the subalgebra of $\widehat{\CC_\g}$
generated by the elements
\begin{equation*}
(\beta^\vee+c_\beta n\mk_{2})(m),\ (e_\beta\ot t_{2}^{n})(m),\  (e_{-\beta}\ot t_{2}^{-n})(m)\ \text{and}\ \mk_{1}(-1)
\end{equation*}
for all $m\in \Z$, where $\beta^\vee$ denotes the coroot of $\beta$.
It is easy to see that  $\mathcal{I}^{n}_\beta$   is isomorphic to the affine Lie algebra
\[\widehat{\fsl_{2}}=\fsl_2\ot \C[t,t^{-1}]\oplus \C\mk,\] with an isomorphism given by
\begin{equation*}
\begin{split}
&(e_\beta \ot t_{2}^{n})(m)\mapsto x_{+}\ot t^m , \quad (e_{-\beta}\ot t_{2}^{-n})(m)\mapsto x_{-}\ot t^m,\\
 & (\beta^\vee\ot 1 +nc_\beta \mk_{2})(m)\mapsto \alpha^{\vee}\ot t^m,\quad c_\beta\mk_{1}(-1)\mapsto \mk,\\
\end{split}
\end{equation*}

View $\mathcal{U}(\mathcal{I}_{\beta}^{n})\mathbbm{1}$  as
an $\widehat{\fsl_{2}}$-module through the above isomorphism. Note that $\mathcal{U}(\mathcal{I}_{\beta}^{n})\mathbbm{1}$
is isomorphic to the induced $\widehat{\fsl_{2}}$-module
\[V_{\widehat{\fsl_{2}}}(\ell,0)=\mathcal{U}(\widehat{\fsl_{2}})\otimes_{\mathcal{U}(\fsl_2\ot \C[t]\oplus \C\mk)}\C_\ell,
\]
where $\C_\ell$ denotes the $(\fsl_2\ot \C[t])\oplus \C\mk$-module on which $\fsl_2\ot \C[t]$ acts trivially and $\mk$ acts as the scalar $\ell$.
It is known  (\cite{LL}) that the coefficients of
\[x_\pm(z)^{\ell+1} 1\ot 1:=(\sum_{m\in \Z}x_\pm\ot t^m z^{-m-1})^{\ell+1}1\ot 1\]
are contained  in the submodule of $V_{\widehat{\fsl_{2}}}(\ell,0)$ generated by $(x_\pm\ot t^{-1})^{\ell+1}\mathbbm{1}$.
This implies that
$$((e_\beta \ot t_{2}^{n})(z))^{c_\beta\ell+1}\mathbbm{1}\in J_{\widehat{\CC_{\g}}}(\ell,0)[[z,z^{-1}]].$$
On the other hand, since
 $$\lf(e_\beta\ot t_{2}^{n}\rg)_{l}(e_\beta\ot t_{2}^{n})=0\ \text{for all}\ l\in \N,$$
we have that (\cite{LL})
\begin{align*}
&Y\lf(\left((e_\beta\ot t_{2}^{n})_{-1}\right)^{c_\beta\ell+1}\mathbbm{1},z\rg)\mathbbm{1}=Y\lf(e_\beta\ot t_{2}^{n},z\rg)^{c_\beta\ell+1}\mathbbm{1},
\end{align*}
Thus, we get that
\begin{align*}
&\mathcal{D}\lf(\left((e_\beta\ot t_{2}^{n})_{-1}\right)^{c_\beta\ell+1}\mathbbm{1}\rg)
=\Res_{z}z^{-2}Y\lf(\left((e_\beta\ot t_{2}^{n})_{-1}\right)^{c_\beta\ell+1}\mathbbm{1},z\rg)\mathbbm{1}\in J_{\widehat{\CC_{\g}}}(\ell,0).
\end{align*}
This completes the proof of lemma.
\end{proof}

In view of Lemma
\ref{lem:ideal-J}, we have an $\N$-graded vertex algebra
\begin{align}
L_{\widehat{\CC_{\g}}}(\ell,0 )=V_{\widehat{\CC_{\g}}}(\ell,0)/J_{\widehat{\CC_{\g}}}(\ell,0),
 \end{align}
equipped with an order $2$ automorphism $\sigma$.
Now we are going to associate the
 $\sigma$-twisted $L_{\widehat{\CC_{\g}}}(\ell,0)$-modules  with the integrable restricted  $\widehat{\mathcal{G}}(\mathcal{J})$-modules of level $\ell$.

\begin{definition}\label{def-int}
{\em A $\widehat{\mathcal{G}}(\mathcal{J})$-module $W$ is said to be {\em integrable} if  for $(m,n)\in S$, $x_{\pm}(m,n)$ acts locally nilpotently on $W$.}
\end{definition}
For $n\in \Z$,
set
 \begin{equation*}
 \begin{split}
 x_{\pm}(z,n)=\sum_{m\in\Z}x_{\pm}(m,n)z^{-m-1}.
 \end{split}
 \end{equation*}

\begin{prpt}\label{prop-integrable} Let $\ell$ be a complex number and let $W$ be a restricted $\widehat{\mathcal{G}}(\mathcal{J})$-module of level $\ell$. Then $W$ is integrable if and only if $\ell$ is a nonnegative integer and for all $n\in \Z$,
\begin{equation*}
 x_{\pm}(z,2n)^{\ell+1}=x_{\pm}(z,2n+1)^{2\ell+1}=0\quad \text{on  }W.
 \end{equation*}
\end{prpt}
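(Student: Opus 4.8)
The strategy is to exhibit, for each integer $n$, a copy of the affine Lie algebra $\widehat{\fsl_2}=\fsl_2\ot\C[t,t^{-1}]\op\C\mathbf{k}$ sitting inside $\widehat{\mathcal{G}}(\mathcal{J})$ whose root vectors are precisely the modes of the generating series $x_\pm(z,2n)$ (the even case) or $x_\pm(z,2n+1)$ (the odd case), and then to invoke the standard integrability criterion for $\widehat{\fsl_2}$: a restricted module of level $c$ has all its real root vectors $x_\pm\ot t^m$ acting locally nilpotently if and only if $c\in\N$ and $x_+(z)^{c+1}=x_-(z)^{c+1}=0$ on the module (see \cite{LL}; cf.\ \cite{K}). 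Everything then reduces to pinning down the isomorphism type and the level of these subalgebras.

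First I would treat the even series. Fix $n\in\Z$ and put $e_m=x_+(m,2n)$, $f_m=x_-(m,-2n)$ and $\tilde h_k=\alpha^\vee(k,0)+4n\,C_2(k,0)$ for $m,k\in\Z$, with the convention that $C_i(k,0)=0$ unless $k$ is even. Reading off the brackets from (R1)--(R3) --- and using the central constraint $(\upsilon\cdot e_1)C_1(\upsilon)+(\upsilon\cdot e_2)C_2(\upsilon)=0$ of (R4), which forces $C_1(k,0)=\delta_{k,0}\,C_1$ --- a direct computation gives $[e_m,e_{m'}]=[f_m,f_{m'}]=0$ together with
\[
[e_m,f_{m'}]=\tilde h_{m+m'}+m\ell\,\delta_{m+m',0},\qquad [\tilde h_k,e_m]=2e_{k+m},\qquad [\tilde h_k,\tilde h_{k'}]=2k\ell\,\delta_{k+k',0}.
\]
The key point is that the toroidal central elements $C_2(k,0)$ thrown off by $[e_m,f_{m'}]$ are absorbed into the Cartan current $\tilde h(z)=\sum_k\tilde h_kz^{-k-1}$; these are then exactly the defining relations of $\widehat{\fsl_2}$ at level $\ell$, with canonical central element $2C_1$ acting as $\ell$. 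Since $e(z)=\sum_m e_mz^{-m-1}=x_+(z,2n)$, the criterion applies verbatim and produces the exponent $\ell+1$.

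For the odd series I would set $e_m=x_+(2m,2n+1)$, $f_m=x_-(2m,-2n-1)$ and $\tilde h_k=\alpha^\vee(2k,0)+2(2n+1)C_2(2k,0)$. The same bookkeeping now yields $[e_m,f_{m'}]=\tilde h_{m+m'}+2m\ell\,\delta_{m+m',0}$, $[\tilde h_k,e_m]=2e_{k+m}$ and $[\tilde h_k,\tilde h_{k'}]=4k\ell\,\delta_{k+k',0}$, so these elements span a copy of $\widehat{\fsl_2}$ of level $2\ell$. Because $x_\pm(m,2n+1)=0$ for odd $m$, the relevant field $e(z)=\sum_m e_mz^{-m-1}$ satisfies $x_\pm(z,2n+1)=z\,e(z^2)$, whence $x_\pm(z,2n+1)^{2\ell+1}=z^{2\ell+1}e(z^2)^{2\ell+1}$ vanishes precisely when $e(z)^{2\ell+1}=0$. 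This accounts for the shift of exponent to $2\ell+1$ and matches $c_\beta\in\{1,2\}$ for the long and short roots.

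With these subalgebras in place both implications follow. In the forward direction, integrability makes every $e_m,f_m$ act locally nilpotently, so applying the criterion to the even subalgebra with $n=0$ forces $\ell\in\N$, and applying it to each even (resp.\ odd) subalgebra gives $x_\pm(z,2n)^{\ell+1}=0$ (resp.\ $x_\pm(z,2n+1)^{2\ell+1}=0$). Conversely, assuming $\ell\in\N$ and the two families of field identities, the criterion returns local nilpotence of $x_\pm(m,2n)$ for all $m$ and of $x_\pm(2m,2n+1)$ for all $m$; since each $(m,n)\in S$ has either even second coordinate (where all first coordinates occur) or odd second coordinate (where only even first coordinates occur), this covers every $x_\pm(m,n)$, so $W$ is integrable. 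The main obstacle I anticipate is the structural verification that the proposed generators close into $\widehat{\fsl_2}$: one must carefully track the parities of $m$ and $n$ through (R1)--(R3) and, most delicately, check that the extra central elements $C_2(k,0)$ produced by $[x_+,x_-]$ are \emph{exactly} the correction terms in $\tilde h_k$, so that $\tilde h(z)$ is a genuine Cartan current and the level is cleanly $\ell$ (even case) or $2\ell$ (odd case); once this is done, the appeal to the affine $\fsl_2$ criterion is routine.
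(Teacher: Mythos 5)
Your proposal is correct and follows essentially the same route as the paper: you build, for each $n$, the same two subalgebras (the paper's $\mathcal{A}_{(n)}$ spanned by $x_{+}(m,2n)$, $x_{-}(m,-2n)$, $\alpha^{\vee}(m,0)+4nC_2(m,0)$, $C_1$, and $\mathcal{B}_{(n)}$ spanned by $x_{+}(2m,2n+1)$, $x_{-}(2m,-2n-1)$, $\alpha^{\vee}(2m,0)+(4n+2)C_2(2m,0)$, $C_1$), identify them with $\widehat{\fsl_2}$ at levels $\ell$ and $2\ell$ via $2C_1\mapsto\mathbf{k}$ and $4C_1\mapsto\mathbf{k}$ respectively, and invoke the restricted-integrable criterion for $\widehat{\fsl_2}$ from \cite{LL}. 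Your extra verifications (the bracket computations, $C_1(k,0)=\delta_{k,0}C_1$ from (R4), and the substitution $x_{\pm}(z,2n+1)=z\,e(z^2)$ explaining the exponent $2\ell+1$) are details the paper leaves implicit, and they check out.
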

\begin{proof}
Let $n\in\Z$. Consider  the following subalgebras of $\widehat{\mathcal{G}}(\mathcal{J})$:
\begin{eqnarray*}
\mathcal{A}_{(n)}&=&\te{Span}\{x_{+}(m,2n),x_{-}(m,-2n),\alpha^{\vee}(m,0) +4nC_2(m,0),C_1\mid m\in\Z\},\\
\mathcal{B}_{(n)}&=&\te{Span}\{x_{+}(2m,2n+1),x_{-}(2m,-2n-1),\alpha^{\vee}(2m,0) +(4n+2)C_2(2m,0),C_1\mid m\in\Z\}.
\end{eqnarray*}
One easily checks that the map
 \begin{eqnarray*}
&&x_{+}(m,2n) \mapsto x_{+}\otimes t^{m},\quad x_{-}(m,-2n) \mapsto x_{-}\otimes t^{m},\\
&&\alpha^{\vee}(m,0) +4nC_2(m,0)\mapsto \alpha^{\vee}\otimes t^{m},\quad 2C_1 \mapsto\mathbf{k}
\end{eqnarray*}
determines a Lie isomorphism from $\mathcal{A}_{(n)}$ to  $\widehat{\fsl_{2}}$,
and the map
\begin{eqnarray*}
&&x_{+}(2m,2n+1) \mapsto x_{+}\otimes t^{m},\quad x_{-}(2m,-2n-1) \mapsto x_{-}\otimes t^{m},\\
&&\alpha^{\vee}(2m,0) +(4n+2)C_2(2m,0)\mapsto \alpha^{\vee}\otimes t^{m},\quad 4C_1 \mapsto\mathbf{k}
\end{eqnarray*}
determines a Lie isomorphism from $\mathcal{B}_{(n)}$ to  $\widehat{\fsl_{2}}$.
The assertion then follows from the fact that a restricted $\widehat{\fsl_{2}}$-module  of level $\ell$  is integrable if and only if $\ell$ is a nonnegative integer and $x_{+}(z)^{\ell+1}=0=x_{-}(z)^{\ell+1}$ on it (see \cite{LL}).
\end{proof}

The following is the second main result of this paper.

\begin{thm}\label{in-re-m} Let $\ell$ be a nonnegative integer. Then the $\frac{1}{2}\Z$-graded $\sigma$-twisted $L_{\widehat{\CC_{\g}}}(\ell,0)$-modules are exactly $\Z$-graded integrable restricted $\widehat{\mathcal{G}}(\mathcal{J})$-modules of level $\ell$.
\end{thm}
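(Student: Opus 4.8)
The plan is to deduce the statement from Theorem~\ref{rem} by pinning down, inside the correspondence established there, exactly which modules descend to the quotient $L_{\widehat{\CC_{\g}}}(\ell,0)=V_{\widehat{\CC_{\g}}}(\ell,0)/J$, where $J:=J_{\widehat{\CC_{\g}}}(\ell,0)$. By Theorem~\ref{rem}, $\frac{1}{2}\Z$-graded $\sigma$-twisted $V_{\widehat{\CC_{\g}}}(\ell,0)$-modules are precisely $\Z$-graded restricted $\widehat{\mathcal{G}}(\mathcal{J})$-modules of level $\ell$, with $Y_W(a,z)=\overline{a}^{\sigma}(z)$ for $a\in\g$. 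Since $J$ is a $\sigma$-stable graded ideal (Lemma~\ref{lem:ideal-J}), $\sigma$ descends to $L_{\widehat{\CC_{\g}}}(\ell,0)$, and by the standard theory of twisted modules over a quotient vertex algebra such a $W$ is a $\sigma$-twisted $L_{\widehat{\CC_{\g}}}(\ell,0)$-module if and only if $Y_W(u,z)=0$ for all $u\in J$. Using the iterate formula for twisted modules (a consequence of the twisted Jacobi identity), the latter holds as soon as $Y_W$ annihilates the generators $((e_\beta\ot t_2^n)_{-1})^{c_\beta\ell+1}\mathbbm{1}$, since these generate $J$ as a $\widehat{\CC_{\g}}$-submodule and applying a mode $a_m$ ($a\in\CC_{\g}$) preserves the vanishing of a twisted field. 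Thus the whole problem reduces to these generators.

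Next I would establish a twisted product formula for each generator. For $u=e_\beta\ot t_2^n\in\g$ with $\beta\in\dot\Delta$ one checks directly that $u_i u=0$ and $(\sigma u)_i u=0$ for all $i\ge 0$: for the long roots $\beta=\pm2\epsilon_i$ the element $\sigma u$ is a distinct (shifted) root vector whose bracket and trace pairing with $u$ both vanish, while for the short roots $\beta=\pm(\epsilon_1-\epsilon_2)$ the element $u$ is itself $\sigma$-fixed. Consequently the twisted field $Y_W(u,z)$ is mutually local with itself with vanishing bracket, so $Y_W(u,z)^{c_\beta\ell+1}$ is well defined and the twisted iterate formula (cf.~\cite{Li2,LL}) gives
\[
Y_W\big(((e_\beta\ot t_2^n)_{-1})^{c_\beta\ell+1}\mathbbm{1},z\big)=Y_W(e_\beta\ot t_2^n,z)^{c_\beta\ell+1}.
\]
Hence $W$ descends to $L_{\widehat{\CC_{\g}}}(\ell,0)$ if and only if $Y_W(e_\beta\ot t_2^n,z)^{c_\beta\ell+1}=0$ for all $\beta\in\dot\Delta$ and $n\in\Z$.

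Finally I would translate these field conditions back into $\widehat{\mathcal{G}}(\mathcal{J})$. A direct computation from the formulas for $\varphi$ and $\overline{a}^{\sigma}(z)$ shows that, up to a nonzero scalar and a substitution $z\mapsto\pm z^{1/2}$, the fields $Y_W(e_\beta\ot t_2^n,z)$ with $\beta=\pm2\epsilon_1,\pm2\epsilon_2$ run exactly over the series $x_+(\cdot,2m)$ and $x_-(\cdot,2m)$ ($m\in\Z$), whereas those with $\beta=\pm(\epsilon_1-\epsilon_2)$ run over $x_+(\cdot,2m+1)$ and $x_-(\cdot,2m+1)$. Since $c_\beta=1$ for the long roots and $c_\beta=2$ for the short roots, the conditions $Y_W(e_\beta\ot t_2^n,z)^{c_\beta\ell+1}=0$ become precisely $x_\pm(z,2m)^{\ell+1}=0$ and $x_\pm(z,2m+1)^{2\ell+1}=0$ for all $m\in\Z$; by Proposition~\ref{prop-integrable} (and the hypothesis $\ell\in\N$) these are exactly the integrability conditions. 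The grading statement is then inherited from Theorem~\ref{rem} together with the fact that $J$ is graded.

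The main obstacle is the twisted product formula of the second paragraph: one must verify the self-locality with vanishing operator product for the non-eigenvectors $e_{\pm2\epsilon_i}\ot t_2^n$, whose $\sigma$-images are shifted root vectors of a different type, and then justify carefully that $Y_W(u,z)^{c_\beta\ell+1}$ really coincides with the image of the generator in the twisted (half-integer power) setting. The remaining bookkeeping --- tracking the scalars $\pm\sqrt{-1}$ and the variable substitution $z\mapsto\pm z^{1/2}$ relating $Y_W(e_\beta\ot t_2^n,z)$ to the series $x_\pm$ --- is routine and, being a mere rescaling of the formal variable, does not affect the vanishing of powers.
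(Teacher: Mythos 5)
Your proposal is correct and follows essentially the same route as the paper's proof: reduce via Theorem \ref{rem} and Proposition \ref{prop-integrable} to checking that $Y_W$ kills the generators of $J_{\widehat{\CC_{\g}}}(\ell,0)$, use the vanishing of all products of $e_\beta\ot t_2^n$ with itself and with its $\sigma$-image (the paper phrases this as $(e_\beta\ot t_2^n)^{(j)}_l(e_\beta\ot t_2^n)=0$, $j=0,1$, citing [CLiTW, Lemma 3.3]) to obtain the twisted iterate formula, and then identify the resulting fields with $x_\pm(z,2n)^{\ell+1}$ and $x_\pm(z,2n+1)^{2\ell+1}$ via the values $c_\beta\in\{1,2\}$. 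The only cosmetic difference is that you spell out the reduction of the quotient-module condition to the generators of the ideal, which the paper leaves implicit.
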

\begin{proof} In view of Theorem  \ref{rem} and Proposition \ref{prop-integrable},
we only need to prove that the $\sigma$-twisted $V_{\widehat{\CC_{\g}}}(\ell,0)$-modules $W$
 on which $Y_{W}(a,z)=0$ for $a\in J_{\widehat{\mathcal{C}_{\g}}}(\ell,0)$ are exactly the restricted $\widehat{\mathcal{G}}(\mathcal{J})$-modules of level $\ell$ on which $ x_{\pm}(z,2n)^{\ell+1}=x_{\pm}(z,2n+1)^{2\ell+1}=0$
 for all $n\in \Z$.

 Let $\beta\in \dot\Delta$, $n\in \Z$.
Note that
\begin{equation*}
(e_\beta\ot t_2^n)^{(j)}_l (e_\beta\ot t_2^n)=0\quad\text{for all}\ l\in \N\ \text{and}\ j=0,1.
\end{equation*}
In view of \cite[Lemma 3.3]{CLiTW}, this implies that
\begin{equation}\label{main2:2}
Y_W(\left((e_\beta\ot t_2^n)_{-1}\right)^{c_\beta\ell+1}\mathbbm{1},z)=Y_W(e_\beta\ot t_2^n,z)^{c_\beta\ell+1}.
\end{equation}
Furthermore, a straightforward computation shows that
\begin{eqnarray*}
&&Y_W(e_{\pm 2\epsilon_1}\otimes t_2^n,z)=\frac{1}{2}z^{-\frac{1}{2}}x_{\pm}(z^{\frac{1}{2}},2n),\\
&& Y_W(e_{\pm 2\epsilon_2}\otimes t_2^n,z)=-\frac{1}{2}z^{-\frac{1}{2}}x_{\mp}(-z^{\frac{1}{2}},2n\pm 2),\\
&&Y_W(e_{\pm(\epsilon_1-\epsilon_2)}\ot t_2^n,z)=\mp\sqrt{-1}z^{-\frac{1}{2}}x_{\pm}(z^{\frac{1}{2}},2n\mp1).
\end{eqnarray*}
The theorem then follows by noting that $c_{\pm 2\epsilon_1}=1=c_{\pm 2\epsilon_2}$ and $c_{\pm(\epsilon_1-\epsilon_2)}=2$.
\end{proof}

Finally, we are aim to classify the irreducible $\frac{1}{2}\N$-graded $\sigma$-twisted $L_{\widehat{\CC_{\g}}}(\ell,0)$-modules.

\begin{prpt}\label{irr-int-hw} Let $W=\oplus_{n\in \N} W_{(n)}$ be an irreducible $\N$-graded integrable restricted $\widehat{\mathcal{G}}(\mathcal{J})$-module.
Then there is a nonzero vector $v\in W$ and a $\lambda\in \CH^*$ such that
$\widehat{\mathcal{G}}(\mathcal{J})^{+}v=0$, $\lambda(C_2)=0$, and
$hv=\lambda(h)v$ for all $h\in \CH$.
\end{prpt}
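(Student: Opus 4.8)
The plan is to locate the highest weight vector inside the bottom graded piece $W_{(0)}$. Since $W=\oplus_{n\in\N}W_{(n)}$ is nonzero and $\N$-graded, $W_{(0)}\neq 0$, and with respect to the grading \eqref{zgradtkk}, normalized so that $\widehat{\mathcal{G}}(\mathcal{J})_{(k)}W_{(n)}\subseteq W_{(n+k)}$, the strictly negative-degree part of $\widehat{\mathcal{G}}(\mathcal{J})$ annihilates $W_{(0)}$. Reading the explicit description of $\widehat{\mathcal{G}}(\mathcal{J})^{+}$ against \eqref{zgradtkk}, every summand of $\widehat{\mathcal{G}}(\mathcal{J})^{+}$ other than the root spaces $\alpha+n\delta_2$ sits in strictly negative degree; hence on $W_{(0)}$ the subalgebra $\widehat{\mathcal{G}}(\mathcal{J})^{+}$ acts through its degree-zero part $\mathrm{Span}\{x_{+}(0,n)\mid n\in\Z\}$. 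So it suffices to produce a nonzero $v\in W_{(0)}$ annihilated by all $x_{+}(0,n)$ that is a simultaneous eigenvector for $\CH$; such a $v$ then satisfies $\widehat{\mathcal{G}}(\mathcal{J})^{+}v=0$ and $hv=\lambda(h)v$. I record at once that $\lambda(C_2)=0$ is automatic, since $[\alpha^{\vee}(0,n),\alpha^{\vee}(0,-n)]=4nC_2$ must act as $0$ on any common $\CH$-eigenvector.

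Next I would analyze $\widehat{\mathcal{G}}(\mathcal{J})_{(0)}$. A direct check from (R1)--(R4) gives $\widehat{\mathcal{G}}(\mathcal{J})_{(0)}=\mathrm{Span}\{x_{\pm}(0,n),\alpha^{\vee}(0,n),C_1(0,2n),C_2\}$, shows that $\langle x_{\pm}(0,n),\alpha^{\vee}(0,n),C_2\rangle$ is isomorphic to the affine algebra $\widehat{\fsl_2}$ in its homogeneous realization in the variable $x^{(0,1)}$ (with $C_2$ the canonical central element, so the level is $2\lambda(C_2)$), and that the $C_1(0,2n)$ are additional central elements. Under this identification $\mathrm{Span}\{x_{+}(0,n)\}$ is the abelian ``horizontal'' subalgebra $\fn_{+}\ot\C[x^{(0,1)},x^{(0,-1)}]$, while $\CH$ is the homogeneous Heisenberg subalgebra together with the central $C_1(0,2n)$. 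By integrability each $x_{\pm}(0,n)$ is locally nilpotent, so $W_{(0)}$ is an integrable $\widehat{\fsl_2}$-module, and the vector I want is precisely one annihilated by the whole horizontal current $\sum_n x_{+}(0,n)z^{-n-1}$ that is also a Heisenberg eigenvector. Since such a vector exists for an integrable $\widehat{\fsl_2}$-module if and only if its level vanishes, the entire problem reduces to proving $2\lambda(C_2)=0$, consistent with the observation above.

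To force level zero I would exploit irreducibility together with boundedness below. First, any central element of nonzero $\mathrm{d}_1$-degree acts as $0$: if $z$ is central of degree $d\neq 0$ then $\ker z$ and $\mathrm{im}\,z$ are graded submodules, and $z$ either kills $W_{(0)}$ or has image missing $W_{(0)}$, so irreducibility gives $z=0$ (in particular all $C_2(2m,0)$, $m\neq 0$, vanish). Next, because $\dot{\fsl}_2=\langle x_{\pm}(0,0),\alpha^{\vee}(0,0)\rangle$ acts integrably, $W_{(0)}$ is a sum of finite-dimensional $\dot{\fsl}_2$-modules, so $\alpha^{\vee}(0,0)$ is diagonalizable with integer eigenvalues; the decisive step is to show these eigenvalues are bounded above. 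Granting this, a vector $w$ of top $\alpha^{\vee}(0,0)$-weight is killed by every $x_{+}(0,n)$ (each raises the $\alpha^{\vee}(0,0)$-weight by $2$); on the $\CH$-stable space of such vectors the relation $[\alpha^{\vee}(0,n),\alpha^{\vee}(0,-n)]=4nC_2$ forces $C_2=0$, whence $\CH$ acts as a commutative algebra and one passes to a joint eigenvector $v$, completing the argument.

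The main obstacle is exactly this boundedness of the $\alpha^{\vee}(0,0)$-weights on $W_{(0)}$, equivalently the vanishing of the auxiliary level $2\lambda(C_2)$: a positive-level integrable $\widehat{\fsl_2}$-module has unbounded horizontal weights and admits no current-annihilated vector, so this is where the hypotheses must enter decisively. I would attack it by viewing $W$ simultaneously as an integrable, $\N$-graded (hence lowest-weight-type) module for the \emph{second} affine subalgebra $\mathcal{A}_{(0)}=\langle x_{\pm}(m,0),\alpha^{\vee}(m,0),C_1\rangle\cong\widehat{\fsl_2}$, whose homogeneous grading is the $\mathrm{d}_1$-grading, so that $W_{(0)}$ is its space of lowest-weight vectors, and then analyzing how the two commuting affine structures interact. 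The expectation, in line with the integrable highest weight modules constructed in \cite{CCT,CT}, is that a positive second level is incompatible with the $\N$-grading and the irreducibility of the ambient $\widehat{\mathcal{G}}(\mathcal{J})$-action; this compatibility analysis, rather than the routine reductions above, is the technical heart of the proof.
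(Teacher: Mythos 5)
Your reduction follows the same skeleton as the paper's proof (work in the bottom graded piece $W_{(0)}$, where the positive-$\delta_1$-degree part of $\widehat{\mathcal{G}}(\mathcal{J})^{+}$ acts as zero; take a vector of maximal $\alpha^{\vee}(0,0)$-weight, which is then killed by all $x_{+}(0,n)$; produce a joint $\CH$-eigenvector), but it breaks down at the two decisive steps. The first and most serious gap is your mechanism for forcing $C_2=0$: on the $\CH$-stable space of top-weight vectors, the relation $[\alpha^{\vee}(0,n),\alpha^{\vee}(0,-n)]=4nC_2$ does \emph{not} force $C_2$ to act as zero. A Heisenberg Lie algebra with nonzero central charge has an abundance of infinite-dimensional modules, and no trace or finite-dimensionality argument is available here since the graded pieces of $W$ are in general infinite-dimensional. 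The paper's actual mechanism is finite-dimensional $\fsl_2$-theory, not the Heisenberg relation: for every $n\in\Z$ the span $\mathcal{A}(n)=\mathrm{Span}\{x_{+}(0,n),\,x_{-}(0,-n),\,\alpha^{\vee}+2nC_2\}$ is a copy of $\fsl_2$; since $2C_2$ acts as a scalar $\ell_2$ and the top-weight vector $v$ is annihilated by $x_{+}(0,n)$, integrability makes $\mathcal{U}(\mathcal{A}(n))v$ a finite-dimensional highest weight $\fsl_2$-module of highest weight $(n_0+\tfrac{1}{2}n\ell_2)\alpha$, so $2n_0+n\ell_2\in\N$ for \emph{every} $n\in\Z$, positive and negative, which forces $\ell_2=0$. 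Note also that the ``known fact'' you invoke in your second paragraph (a positive-level integrable $\widehat{\fsl_2}$-module admits no vector killed by the full current) is essentially equivalent to this statement, so it cannot be cited as an input; it is what must be proved.

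The second gap is that the boundedness of the $\alpha^{\vee}(0,0)$-weights, which you correctly single out as essential, is left as an ``expectation'' rather than proved. The paper disposes of it in one line from results already established: irreducibility makes $2C_1$ act as a scalar $\ell_1$, and Proposition \ref{prop-integrable} (which is exactly an analysis of your ``second affine structure'' $\mathcal{A}_{(0)}\cong\widehat{\fsl_2}$) gives $\ell_1\in\N$ and $x_{\pm}(0,0)^{\ell_1+1}=0$ on $W$; hence every irreducible $\fsl_2$-constituent of $W$ under $\mathcal{A}(0)$ has highest weight at most $\ell_1$, so $W(n\alpha)=0$ for $|n|>\tfrac{1}{2}\ell_1$. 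So the route you propose is the right one, but the needed statement is already packaged in Proposition \ref{prop-integrable}; no new compatibility analysis of two affine structures is required. Finally, your last step ``one passes to a joint eigenvector'' also needs justification: an abelian Lie algebra acting on an infinite-dimensional space need not have an eigenvector. The paper gets it by showing $\Omega_W=\{w\in W\mid \widehat{\mathcal{G}}(\mathcal{J})^{+}w=0\}$ is an irreducible $\CH$-module (by irreducibility of $W$), which, once $C_2$ acts as zero, is a module for the abelian algebra $\CH/\C C_2$ and hence one-dimensional.
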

\begin{proof}
Since $W$ is irreducible, for $i=1,2$, the central element $2C_i$  acts as a scalar, say $\ell_i$, on it.
 For any $n\in \Z$,  set
 $$
\mathcal{A}(n)=\mathrm{Span}\{ x_{+}(0,n),\  x_{-}(0,-n),\ \alpha^{\vee}+2nC_2\},
 $$
which is a  subalgebra of $\widehat{\mathcal{G}}(\mathcal{J})$ that isomorphic to $\fsl_2$.
View $W$ as an $\fsl_2$-module via the isomorphism $\mathcal{A}(0)\cong \fsl_2$. The integrability of $W$ implies that
\[W=\oplus_{n\in \frac{1}{2}\Z} W(n\alpha)\qquad (W(n\alpha)=\{w\in W\mid \alpha^\vee w=2n w\}).\]
On the other hand, Proposition  \ref{prop-integrable} implies that $\ell_1\in \N$ and $x_\pm(0,0)^{\ell_1+1}=0$ on $W$.
 Thus $W(n\alpha)=0$ for all $|n|>\frac{1}{2}\ell_1$.

 For convenience, we assume that $W_{(0)}\ne 0$. Choose a nonzero vector $v\in W_{(0)}\cap W(n_0\alpha)$, where
  $n_0=\max\{n\in \frac{1}{2}\N\mid W_{(0)}\cap W(n\alpha)\ne 0\}$.
  Set $\Omega_W=\{w\in W\mid \widehat{\mathcal{G}}(\mathcal{J})^{+}w=0\}$, which is an $\CH$-submodule of $W$.
  Note that  the irreducibility of $W$ implies that $\Omega_W$ is also irreducible as an  $\CH$-module.
  And, it is easy to check that $v\in \Omega_W$.

  For every $n\in \Z$, via the isomorphism $\mathcal{A}(n)\cong \fsl_2$,
  $\mathcal{U}(\mathcal{A}(n))v$ becomes an integrable highest weight $\fsl_2$-module with the highest weight
  $(n_0+\frac{1}{2}n\ell_2)\alpha$.
This gives that $2n_0+n\ell_2\in \N$ for all $n\in \Z$, so $\ell_2=0$.
Now, as an irreducible module for the abelian Lie algebra $\CH/\C C_2$,
$\Omega_W$ must be one-dimensional.
Thus, the vector $v$ satisfies the conditions stated in proposition.
\end{proof}

Note that $L_{\widehat{\CC_{\g}}}(\ell,0 )=\C$ when $\ell=0$.
So we assume that $\ell$ is a positive integer.
 Write $P_{\ell}$ for the set consisting of those triples
\[
(\bm{\lambda},\bm{\mu},\bm{c})=((\lambda_1,\dots,\lambda_r),(\mu_1,\dots,\mu_r),(c_1,\dots,c_r))\in \N^r\times \N^r\times (\C^\times)^r,\ r=1,2,\dots,\ell,
\]
which satisfy the following conditions
\begin{itemize}
\item $\lambda_i+\mu_i>0$ for all $i=1,\dots,r$,
\item $\sum_{i=1}^s(\lambda_i+\mu_i)=\ell$, and
\item $c_1,c_2,\dots,c_r$ are distinct.
\end{itemize}
Given a  $(\bm{\lambda},\bm{\mu},\bm{c})\in P_{\ell}$  as above.
Let $\C v_{\bm{\lambda},\bm{\mu},\bm{c}}$ be a 1-dimensional $
\widehat{\mathcal{G}}(\mathcal{J})^+\oplus\CH$-module defined by
\begin{itemize}
\item $\widehat{\mathcal{G}}(\mathcal{J})^+.v_{\bm{\lambda},\bm{\mu},\bm{c}}=0$,\quad $C_2.v_{\bm{\lambda},\bm{\mu},\bm{c}}=0$,
\item $\alpha^{\vee}(0,m).v_{\bm{\lambda},\bm{\mu},\bm{c}}=(\sum_{i=1}^{r}\lambda_i c_i^m)v_{\bm{\lambda},\bm{\mu},\bm{c}}$ for all $m\in \Z$, and
\item $(2C_1(0,2m)-\alpha^{\vee}(0,2m)).v_{\bm{\lambda},\bm{\mu},\bm{c}}=(\sum_{i=1}^{r}\mu_i c_i^{m})v_{\bm{\lambda},\bm{\mu},\bm{c}}$ for all $m\in \Z$.
\end{itemize}

Form the induced $\widehat{\mathcal{G}}(\mathcal{J})$-module
\[V(\bm{\lambda},\bm{\mu},\bm{c})=\mathcal{U}(\widehat{\mathcal{G}}(\mathcal{J}))\ot_{\mathcal{U}(\widehat{\mathcal{G}}(\mathcal{J})^+\oplus \CH)}
\C v_{\bm{\lambda},\bm{\mu},\bm{c}}.\]
It is easy to see that $V(\bm{\lambda},\bm{\mu},\bm{c})$ has an irreducible quotient, which we denote as
$L(\bm{\lambda},\bm{\mu},\bm{c})$.
Let  $(\bm{\lambda}',\bm{\mu}',\bm{c}')$ be another triple in $P_{\ell}$.
 Then $L(\bm{\lambda},\bm{\mu},\bm{c})\cong L(\bm{\lambda}',\bm{\mu}',\bm{c}')$ if and only if there is a permutation $\tau\in S_r$ such that
\[(\bm{\lambda}',\bm{\mu}',\bm{c}')=((\lambda_{\tau(1)},\dots,\lambda_{\tau(r)}),
(\mu_{\tau(1)},\dots,\mu_{\tau(r)}),(c_{\tau(1)},\dots,c_{\tau(r)})).\]

Endow a canonical  $\N$-grading structure on the $\widehat{\mathcal{G}}(\mathcal{J})$-module
$V(\bm{\lambda},\bm{\mu},\bm{c})$ such that $\deg v_{\bm{\lambda},\bm{\mu},\bm{c}}=0$.
Then $L(\bm{\lambda},\bm{\mu},\bm{c})$ becomes an irreducible $\N$-graded (restricted) $\widehat{\mathcal{G}}(\mathcal{J})$-module of level $\ell$.
On the other hand, it is proved in \cite[Theorem  3.8]{CT} (see also \cite{CCT}) that
the $\widehat{\mathcal{G}}(\mathcal{J})$-module  $L(\bm{\lambda},\bm{\mu},\bm{c})$ is integrable.
Thus, by applying Theorem \ref{in-re-m},
$L(\bm{\lambda},\bm{\mu},\bm{c})$  is naturally
an irreducible $\frac{1}{2}\N$-graded $\sigma$-twisted $L_{\widehat{\mathcal{C}_{_{\g}}}}(\ell,0)$-module.
Conversely, we have the following classification result.

\begin{thm}\label{g-irr-in-re-m}Let $\ell$ be a positive integer. Then every
 irreducible $\frac{1}{2}\N$-graded $\sigma$-twisted $L_{\widehat{\mathcal{C}_{_{\g}}}}(\ell,0)$-module
 is isomorphic to  $L(\bm{\lambda},\bm{\mu},\bm{c})$ for some (unique up to a permutation)
$(\bm{\lambda},\bm{\mu},\bm{c})\in P_\ell$.
\end{thm}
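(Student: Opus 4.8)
The plan is to transport the statement to the baby TKK algebra and then reduce to a classification of highest weight modules. By Theorem~\ref{in-re-m} an irreducible $\frac{1}{2}\N$-graded $\sigma$-twisted $L_{\widehat{\CC_\g}}(\ell,0)$-module $W$ is exactly an irreducible $\N$-graded integrable restricted $\widehat{\mathcal{G}}(\mathcal{J})$-module of level $\ell$ (the halving of degrees recorded in Lemma~\ref{lem:zvsfrac12z} turns $\frac{1}{2}\N$-gradings into $\N$-gradings), so it suffices to identify such a $\widehat{\mathcal{G}}(\mathcal{J})$-module with some $L(\bm{\lambda},\bm{\mu},\bm{c})$, $(\bm{\lambda},\bm{\mu},\bm{c})\in P_\ell$. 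After shifting the grading so that $W_{(0)}\neq 0$, Proposition~\ref{irr-int-hw} furnishes a nonzero $v\in W_{(0)}$ and $\lambda\in\CH^*$ with $\widehat{\mathcal{G}}(\mathcal{J})^{+}v=0$, $\lambda(C_2)=0$ and $hv=\lambda(h)v$ for $h\in\CH$. Irreducibility then forces $W=\mathcal{U}(\widehat{\mathcal{G}}(\mathcal{J}))v$ to be the unique irreducible graded quotient of the induced module attached to $\lambda$, so $W$ is completely determined by $\lambda|_{\CH}$ and everything reduces to identifying which functionals $\lambda$ occur.

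The first family of data comes from the horizontal loop-$\fsl_2$. A direct check from (R1)--(R3) shows that the span of $\{x_{\pm}(0,n),\,\alpha^{\vee}(0,n),\,C_2\mid n\in\Z\}$ is a subalgebra isomorphic to $\widehat{\fsl_2}$ with $2C_2\mapsto\mathbf{k}$; since $\lambda(C_2)=0$ it acts on $W$ at level $0$, i.e.\ through the loop algebra $\fsl_2\ot\C[t,t^{-1}]$, and $v$ is a loop-highest weight vector as $x_{+}(0,n)v=0$ for all $n$. Integrability then forces the functional $m\mapsto\lambda(\alpha^{\vee}(0,m))$ to be a finite exponential sum
\begin{equation*}
\lambda(\alpha^{\vee}(0,m))=\sum_{i=1}^{r}\lambda_i\,c_i^{m}\qquad(m\in\Z)
\end{equation*}
with distinct $c_i\in\C^{\times}$ and $\lambda_i\in\N$; this is the standard classification of integrable highest weight modules over a loop algebra (the same $\widehat{\fsl_2}$-integrability input used in Proposition~\ref{prop-integrable}, cf.\,\cite{CT,CCT}). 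The $c_i$ are the spectral parameters and the $\lambda_i$ the first family of labels.

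The crux is to pin down the central values $\lambda(C_1(0,2m))$ and to show they give a second family $\mu_i$ over the \emph{same} parameters $c_i$. Here the horizontal $\fsl_2$ is blind to $C_1$, since the $C_1(0,2m)$ are central in the degree-zero subalgebra, so the constraint must come globally. Using the realization of Section~4, $W$ corresponds to an integrable module for the (twisted) horizontal loop algebra of $\fsp_4$, under which the two Cartan directions of $\fsp_4$ are matched with $\alpha^{\vee}(0,2m)$ and with $2C_1(0,2m)-\alpha^{\vee}(0,2m)$; the long- and short-root integrability conditions $x_{\pm}(z,2n)^{\ell+1}=x_{\pm}(z,2n+1)^{2\ell+1}=0$ of Proposition~\ref{prop-integrable} are precisely the integrability of the two long-root $\fsl_2$'s and the short-root $\fsl_2$ of $\fsp_4$. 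Reading off the two Dynkin labels of the resulting evaluation module at each point yields
\begin{equation*}
\lambda\bigl(2C_1(0,2m)-\alpha^{\vee}(0,2m)\bigr)=\sum_{i=1}^{r}\mu_i\,c_i^{m}\qquad(m\in\Z),
\end{equation*}
with $\mu_i\in\N$ and with the same $c_i$. I expect the main work, and the main obstacle, to lie exactly here: establishing that the two sequences share their spectral parameters and that both coefficient families are nonnegative integers, i.e.\ reproving the relevant Chari--Pressley-type evaluation-module classification in this twisted setting (this is the content drawn from \cite{CT,CCT}). Discarding any point with $\lambda_i=\mu_i=0$ gives $\lambda_i+\mu_i>0$, while specializing $m=0$ and using $\lambda(2C_1(0,0))=\ell$ gives $\sum_i(\lambda_i+\mu_i)=\ell$, so $(\bm{\lambda},\bm{\mu},\bm{c})\in P_\ell$.

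Finally, comparing the $\CH$-action shows that $v$ satisfies exactly the defining relations of $v_{\bm{\lambda},\bm{\mu},\bm{c}}$, whence $W$ is the irreducible graded quotient $L(\bm{\lambda},\bm{\mu},\bm{c})$; uniqueness up to a permutation is then the isomorphism criterion for the $L(\bm{\lambda},\bm{\mu},\bm{c})$ recorded just before the theorem. The hard part, to reiterate, is the third step: the central values $\lambda(C_1(0,2m))$ are invisible to the horizontal $\fsl_2$ and can only be tied to the parameters $c_i$ with integral labels through the full $\fsp_4$-evaluation picture.
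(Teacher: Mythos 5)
Your skeleton matches the paper's: transport $W$ to an irreducible $\N$-graded integrable restricted $\widehat{\mathcal{G}}(\mathcal{J})$-module via Theorem~\ref{in-re-m}, invoke Proposition~\ref{irr-int-hw} to get a highest weight vector $v$ with $\lambda(C_2)=0$, and show the eigenvalue functions $m\mapsto\lambda(\alpha^{\vee}(0,m))$ and $m\mapsto\lambda(2C_1(0,2m)-\alpha^{\vee}(0,2m))$ are finite exponential sums with positive-integer coefficients. Your treatment of the first functional (via the horizontal loop $\fsl_2$ at level $\lambda(C_2)=0$) is exactly the paper's use of the subalgebra $\mathfrak{a}_0$ together with \cite[Lemma 4.6]{CLiTW}. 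But your third step --- the one you yourself flag as ``the main obstacle'' --- is a genuine gap: you propose to control $\lambda(2C_1(0,2m)-\alpha^{\vee}(0,2m))$ by ``reading off Dynkin labels'' from a twisted $\fsp_4$ evaluation-module picture, i.e.\ by a Chari--Pressley-type classification in the twisted setting that you acknowledge you have not established and cannot simply cite. The missing idea is much simpler and stays entirely inside $\widehat{\mathcal{G}}(\mathcal{J})$: the span of
\begin{equation*}
\{\,x_{-}(1,2m),\ x_{+}(-1,2m),\ 2C_1(0,2m)-\alpha^{\vee}(0,2m),\ C_2 \mid m\in\Z\,\}
\end{equation*}
is a \emph{second} copy of $\widehat{\fsl_2}$ (the paper's $\mathfrak{a}_1$), whose annihilation operators $x_{-}(1,2m)$ lie in $\widehat{\mathcal{G}}(\mathcal{J})^{+}$ and hence kill $v$, and whose Cartan loop part is precisely $h_1(m)=2C_1(0,2m)-\alpha^{\vee}(0,2m)$. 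So the very same loop-highest-weight lemma used for $h_0$ applies verbatim to $h_1$, yielding $h_1(m)v=(\sum_j p_{1,j}b_{1,j}^{m})v$ with $p_{1,j}\in\Z_{+}$ and $b_{1,j}\in\C^{\times}$. Your claim that ``the constraint must come globally'' because the horizontal $\fsl_2$ is blind to $C_1$ is true as far as it goes, but it misses that a single additional affine $\fsl_2$ inside the TKK algebra already supplies the constraint.

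A second, smaller misdiagnosis: you present ``establishing that the two sequences share their spectral parameters'' as part of the hard work, but nothing of the sort is needed. The definition of $P_\ell$ only requires $\lambda_i+\mu_i>0$ with distinct $c_i$; the paper simply takes $c_1,\dots,c_r$ to be the \emph{union} of the two spectral sets $\{b_{0,j}\}$ and $\{b_{1,j}\}$ and pads the missing coefficients with zeros ($\lambda_k=0$ or $\mu_k=0$ where a point occurs in only one family). With the subalgebra $\mathfrak{a}_1$ in hand, your remaining steps (the normalization $\sum_i(\lambda_i+\mu_i)=\ell$ from $2C_1$ acting as $\ell$, matching $v$ against the defining relations of $v_{\bm{\lambda},\bm{\mu},\bm{c}}$, and uniqueness up to permutation) go through as you wrote them.
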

\begin{proof} Let $W$ be an irreducible $\frac{1}{2}\N$-graded $\sigma$-twisted $L_{\widehat{\mathcal{C}_{_{\g}}}}(\ell,0)$-module.
By Theorem \ref{in-re-m}, $W$ is naturally  an irreducible integrable $\N$-graded $\widehat{\mathcal{G}}(\mathcal{J})$-module.
Consider the following subalgebras of $\widehat{\mathcal{G}}(\mathcal{J})$:
\begin{equation*}
\begin{split}
   & \mathfrak{a}_0=\text{Span}\{x_{+}(0,m),\ x_{-}(0,m),\ \alpha^{\vee}(0,m), \ C_2 \mid m\in \Z\},\\
    & \mathfrak{a}_1=\text{Span}\{x_{-}(1,2m),\ x_{+}(-1,2m),\ 2C_1(0,2m)-\alpha^{\vee}(0,2m), \ C_2 \mid m\in \Z\}.
\end{split}
\end{equation*}
It is obvious that both $\mathfrak{a}_0$ and $\mathfrak{a}_1$ are isomorphic to $\widehat{\fsl_2}$.

Fix a vector $v\in W$ be as in Proposition \ref{irr-int-hw}. Particulary, we have that
\[x_+(0,m).v=0=x_{-}(1,2m).v\quad \text{and}\quad h_i(m).v \in \C v\ \text{for all}\ m\in \Z, i=0,1,\]
where $h_0(m)=\alpha^{\vee}(0,m)$ and $h_1(m)=2C_1(0,2m)-\alpha^{\vee}(0,2m)$.
In view of this and the integrability of $W$, one can conclude from \cite[Lemma 4.6]{CLiTW} that for each $i=0,1$,
 there are finitely many nonzero complex numbers $b_{i,1},\dots, b_{i,k_i}$  and positive integers $p_{i,1},\dots, p_{i,k_i}$ such that
\begin{align*}
h_i(m).v=(\sum_{j=1}^{k_i}p_{i,j}b_{i,j}^m)v
\end{align*}
for all $m\in\Z$.
Let $c_1,\dots, c_r$ be all the  distinct elements in $\{b_{i,j}\mid i=0,1,
\ 1 \le j\le k_i\}$. For $k=1,\dots,r$, set $\lambda_k=p_{0,j}$ if $c_k=b_{0,j}$ for some $j=1,\dots,k_0$, and set $\lambda_k=0$ otherwise.
Similarly, set $\mu_k=p_{1,j}$ if $c_k=b_{1,j}$ for some $j=1,\dots,k_1$, and set $\mu_k=0$ otherwise.
Then one easily checks that the triple
\[(\bm{\lambda},\bm{\mu},\bm{c})=((\lambda_1,\dots,\lambda_r),(\mu_1,\dots,\mu_r),(c_1,\dots,c_r))\in P_\ell,\]
and hence $W\cong L(\bm{\lambda},\bm{\mu},\bm{c})$.
This completes the proof.
\end{proof}

\end{document}